%% for a journal layout:
\documentclass[final,1p,times]{elsarticle}
%%\documentclass[final,1p,times,twocolumn]{elsarticle}
%%\documentclass[final,3p,times]{elsarticle}
%% \documentclass[final,3p,times,twocolumn]{elsarticle}
%%\documentclass[final,5p,times]{elsarticle}
%% \documentclass[final,5p,times,twocolumn]{elsarticle}

%%%%%%%%%%%%%%%%%======packages, only include needed
\usepackage[colorlinks=true]{hyperref}
\usepackage{amsfonts}
\usepackage{mathrsfs}
%%%%%%%%%%%%%%%%%%%%%%%%%%%%%%%%%%%%%
\usepackage{amsthm,amsmath,amssymb,mathrsfs,amsfonts,graphicx,graphics,latexsym,exscale,cmmib57,dsfont,bbm,amscd}
%%%%%%%%%%%%%%%%%%%%%
\usepackage{color,soul}%%%\hl{ highlighting text } % use hl{ } to highlight the sentence.
%%%%%%%%%%%%%%%%%%%%%%%%%%But need to compile with PDFLatex, or Latex + DVI2PDF.
%%%%%%%%%%%%%%%%%======page settings
\frenchspacing
%%%%%%%%%%%%%%%%%%%%%%%%%%%%%%%%%%%%%%%%%%%%%%%%%%%%%%%%%%%%%%%%%%%%%%%%%
%%Theorem environments

%%\theoremstyle{plain} %% This is the default
\newtheorem{thm}{\quad\ Theorem}[section]
\newtheorem{lem}[thm]{\quad\ Lemma}

\newtheorem*{cor}{\quad Corollary}
\newtheorem{corr}{\quad Corollary}
\newtheorem{corollary}{\quad Corollary}

\theoremstyle{definition}
\newtheorem{defn}[thm]{\quad\ Definition}
\newtheorem{notation}[thm]{\quad\ Notation}
\newtheorem*{note}{\quad\ Note}
%\numberwithin{equation}{section}
%%%%%%%%%%%%%%%%%%%%%%%%%%%%%%%%%
%% Place the the running title of the paper with 40 letters or less in []
 %% and the title of the paper in { }.
%%%%%%%%%%%%%%%%%%%%%%%%%%%%%%%%%%%%%%%%%%%%%%%%%%%%%%%%%%%%%%%%%%%%%%%%%%
\journal{xxx}

\begin{document}

\begin{frontmatter}
\title{When is weakly almost periodic equivalent to uniformly almost periodic for semiflows?}
%\author{Joseph Auslander}
%\ead{jna@math.umd.edu}
%%\cortext[cor1]{Corresponding author}
%\address{Department of Mathematics, University of Maryland, College Park, MD 20742, U.S.A.}

\author{Xiongping Dai}
\ead{xpdai@nju.edu.cn}
%%\cortext[cor1]{Corresponding author}
%\author{Hailan Liang}
%\ead{hailan.liang@smail.nju.edu.cn}
\address{Department of Mathematics, Nanjing University, Nanjing 210093, People's Republic of China}

\begin{abstract}
Let $\pi\colon T\times X\rightarrow X$ be a semiflow on a compact Hausdorff space $X$ with phase semigroup $T$.
This paper provides us with some conditions under which \textsl{weakly almost periodic} is equivalent to \textsl{uniformly almost periodic} or to \textsl{equicontinuous} for $(T,X,\pi)$.
\end{abstract}

\begin{keyword}
Weakly/Uniformly almost periodic $\cdot$ Universally transitive $\cdot$ Unique ergodicity

\medskip
\MSC[2010] 37B05 $\cdot$ 54H15 $\cdot$ 37A25
\end{keyword}
\end{frontmatter}

%%% ----------------------------------------------------------------------
%%% ----------------------------------------------------------------------
%\tableofcontents
%%%%%%%%%%%%%%%%%%%%%%%%%%%%%%%%%%%%%%%%%%
%%%%%%%%%%%%%%%%%%%%%%%%%%%%%%%%%%%%%%%%%%
\section{Introduction}\label{sec1}%%%
Throughout this paper, let $X$ be a compact T$_2$-space with the symmetric uniform structure $\mathscr{U}_X$, unless stated otherwise. Given any $x\in X$ and $\varepsilon\in\mathscr{U}_X$, write $\varepsilon[x]=\{y\in X\,|\,(x,y)\in\varepsilon\}$. By $C(X,X)$ we denote the set of all continuous mappings of $X$ to itself, and by $C(X,\mathbb{R})$ the set of all real continuous functions defined on $X$.

As usual, a \textit{semiflow} is a triple $(T,X,\pi)$, or simply write $(T,X)$, where $T$ is a topological semigroup with a neutral element $e$ (i.e. $et=te=t\ \forall t\in T$), and where the action of the phase semigroup $T$ from the left on the phase space $X$, $\pi\colon T\times X\rightarrow X,\ (t,x)\mapsto tx$
is a jointly continuous mapping such that
\begin{itemize}
\item $ex=x\ \forall x\in X$ (i.e. $\pi_e=\textit{id}_X$) and $(ts)x=t(sx)\ \forall s,t\in T, x\in X$.
\end{itemize}
If $T$ is just a topological group here, then $(T,X)$ will be called a \textit{flow} with phase group $T$.

\begin{notation}
Given any $x\in X$, by $Tx$ we will denote the orbit $\{tx\,|\,t\in T\}$ of $x$ under a semiflow $(T,X)$. Let $\textrm{cls}_XTx$ stand for the closure of $Tx$ in $X$.

For any $f\in C(X,\mathbb{R})$, we shall write $Tf=\{f_t\,|\,t\in T\}$, where
$f_t\colon X\rightarrow\mathbb{R}$ is defined by $x\mapsto f(tx)$, associated to $(T,X)$.
\end{notation}

\begin{notation}[\cite{Fur}]
A subset $A$ of $T$ is said to be \textit{syndetic} if there is a compact subset $K$ of $T$ such that $Kt\cap A\not=\emptyset$ for each $t\in T$. This is equivalent to condition $T=KA$ for some compact subset $K$ of $T$, whenever $T$ is a group.
\end{notation}

\begin{defn}[{cf.~\cite{Aus,DX,AD}}]\label{def1.3}%%%
A semiflow $(T,X)$ is called \textit{uniformly almost periodic} (for short: \textit{u.a.p.}) in case for any $\varepsilon\in\mathscr{U}_X$, there is a syndetic subset $A$ of $T$ such that $Ax\subseteq\varepsilon[x]$ for all point $x$ of $X$.
\end{defn}

By \cite[Proposition~4.15]{E69}, a \textit{flow} $(T,X)$ is \textit{u.a.p.} if and only if $Tf$ is relatively compact in $C(X,\mathbb{R})$ under the uniform convergence topology.

\begin{defn}[{cf.~\cite{DX,AD}}]\label{def1.4}%%%
A semiflow $(T,X)$ is called \textit{equicontinuous} in case given any $\varepsilon\in\mathscr{U}_X$, there exists a $\delta\in\mathscr{U}_X$ such that $T\delta\subseteq\varepsilon$; i.e., $(x,y)\in\delta$ implies $(tx,ty)\in\varepsilon\ \forall t\in T$, for all $x,y\in X$.

It should be noted that in general, the equicontinuous is conceptually weaker than the uniformly almost periodic, for semiflows. See, e.g., \cite{AD,DX}, for counterexamples.
\end{defn}

\begin{defn}[{cf.~\cite{EN} for $T$ a group}]\label{def1.5}%%%
A semiflow $(T,X)$ is referred to as \textit{weakly almost periodic} (\textit{w.a.p.} for short) if given any $f\in C(X,\mathbb{R})$, $Tf$ is relatively compact in the topology of pointwise convergence on $C(X,\mathbb{R})$.

Recall that for a net $\{f_n\}$ and $g$ in $C(X,\mathbb{R})$, $f_n\to g$ under the pointwise convergence topology if and only if $f_n(x)\to g(x)$ for all $x\in X$.
\end{defn}

According to Definition~\ref{def1.5}, we can easily obtain the following results and we omit their proofs here.
\begin{itemize}
\item Let $(T,X)$ be a \textit{w.a.p.} semiflow and $\Lambda$ an invariant closed subset of $X$; then $(T,\Lambda)$ is a \textit{w.a.p.} subsemiflow of $(T,X)$.

\item If $(T,X)$ is an equicontinuous semiflow, then for any $f\in C(X,\mathbb{R})$, $Tf$ is an equicontinuous family of functions. Thus by Ascoli's theorem, $Tf$ is relatively compact in $C(X,\mathbb{R})$ under the pointwise topology so $(T,X)$ is \textit{w.a.p.} by Definition~\ref{def1.5}.
\end{itemize}

\begin{thm}[{\cite[Theorem~1.5]{DX}}]\label{thm1.6}%%%
Let $(T,X)$ be a semiflow. Then $(T,X)$ is uniformly almost periodic if and only if it is equicontinuous such that each $t\in T$ is a self-surjection of $X$.
\end{thm}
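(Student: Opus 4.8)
The plan is to treat the two implications separately and, in both, to pass to the enveloping semigroup $E=\mathrm{cls}\,\{\pi_t\colon t\in T\}$, the closure of the set of transition maps inside $X^X$. I would first dispose of the easier direction: that equicontinuity together with surjectivity of every $\pi_t$ implies u.a.p. By equicontinuity the family $\{\pi_t\}$ is uniformly equicontinuous, so by Ascoli's theorem it is relatively compact in $C(X,X)$ for the topology of uniform convergence; let $E$ be its closure there. Since composition is jointly continuous on an equicontinuous family, $E$ is a compact topological semigroup containing $\pi_e=\mathrm{id}_X$, and every element of $E$ is a uniform limit of maps $\pi_t$. A uniform limit of surjections of a compact space is a surjection (pull a target point back along the net and use compactness of $X$), so each member of $E$ is onto; consequently the only idempotent of $E$ is $\mathrm{id}_X$, since $u=u^2$ with $u$ onto forces $u(x)=u^2(z)=u(z)=x$ after writing $x=u(z)$.

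Applying the Ellis--Numakura lemma to the monothetic subsemigroup $\mathrm{cls}\,\{g^n\colon n\ge1\}$ of a fixed $g\in E$ produces an idempotent, necessarily $\mathrm{id}_X$; hence $\mathrm{id}_X\in\mathrm{cls}\,\{g^n\}$ and a subnet of $g^{\,n-1}$ furnishes a two-sided inverse of $g$ in $E$. Thus $E$ is a compact topological group. To extract u.a.p., fix $\varepsilon\in\mathscr{U}_X$, choose a symmetric $V$ with $V\circ V\subseteq\varepsilon$, and set $N=\{g\in E\colon (x,gx)\in V\ \forall x\in X\}$, a symmetric neighbourhood of $\mathrm{id}_X$ (symmetry uses surjectivity of the elements of $E$). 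Covering the compact group $E$ by finitely many translates $N^{\circ}g_1,\dots,N^{\circ}g_n$ and using density of $\{\pi_t\}$ in $E$ to pick $k_i\in T$ with $\pi_{k_i}$ $V$-close to $g_i$, I would show that $K=\{k_1,\dots,k_n\}$ witnesses syndeticity of $A=\{a\in T\colon (x,ax)\in\varepsilon\ \forall x\}$: given $t\in T$ one locates $i$ with $\pi_t^{-1}\in N^{\circ}g_i$ and checks, by a two-step chain through $V\circ V$, that $k_it\in A$. Hence $Ax\subseteq\varepsilon[x]$ for all $x$ with $A$ syndetic, which is exactly u.a.p.

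For the converse I would again build $E$, now only as the pointwise closure, and use u.a.p. to show each $\pi_t$ is a homeomorphism and that $\{\pi_t\}$ is equicontinuous. Syndeticity of each return set $R_\varepsilon=\{a\colon (x,ax)\in\varepsilon\ \forall x\}$ lets me choose $k_\varepsilon$ in the compact witness with $k_\varepsilon s\in R_\varepsilon$, so $\pi_{k_\varepsilon}\circ\pi_s\to\mathrm{id}_X$ uniformly as $\varepsilon$ shrinks; a pointwise-convergent subnet of $\pi_{k_\varepsilon}$ then gives $g\in E$ with $g\circ\pi_s=\mathrm{id}_X$, so every $\pi_s$ is injective, and the semigroup structure of $E$ (with $\mathrm{id}_X$ uniformly recurrent) upgrades this one-sided inverse to a two-sided one, making each $\pi_s$ a self-surjection, indeed a homeomorphism.

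The main obstacle is the remaining passage from u.a.p. to \emph{equicontinuity}. The naive attempt — writing $\pi_s=\pi_k^{-1}\circ\pi_{ks}$ with $ks\in R_\varepsilon$ and trying to cancel $\pi_k$ — stalls, because a uniform equicontinuity modulus for the inverses $\pi_k^{-1}$ is entangled in scale with the syndetic witness, producing a genuine circularity at the elementary level. I expect the honest resolution to run through the enveloping group: once $E$ is known to be a compact group of homeomorphisms with jointly continuous action $E\times X\to X$, equicontinuity of $\{\pi_t\}\subseteq E$ is automatic, and combined with the surjectivity already obtained this closes the equivalence. Establishing that joint continuity — equivalently, that $E$ acts equicontinuously — directly from the uniform recurrence supplied by u.a.p. is the step I would invest the most effort in.
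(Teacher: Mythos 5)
You should first be aware that the paper does not prove this statement: it is quoted from \cite[Theorem~1.5]{DX}, and the only hint given about its proof is the remark that it rests on Theorem~\ref{thm2.5} (equicontinuous with every $t$ surjective implies distal). So your proposal must stand on its own. Your argument for the implication ``equicontinuous and every $\pi_t$ surjective $\Rightarrow$ u.a.p.'' essentially does stand: passing to the uniform closure $E$ of $\{\pi_t\}$, noting that every element of $E$ is a continuous surjection, that the only idempotent of $E$ is $\mathrm{id}_X$, invoking Ellis--Numakura to conclude $E$ is a compact topological group, and then running the covering/syndeticity argument is correct; your ``only idempotent is the identity'' step is in substance the distality statement of Theorem~\ref{thm2.5}, so this half is aligned with the route the paper indicates.

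The converse implication is where the proposal genuinely fails, and you say so yourself. There are two concrete gaps. (a) \emph{Surjectivity}: from $k_\varepsilon s\in A_\varepsilon$ you correctly extract $g\in E$ with $g\circ\pi_s=\mathrm{id}_X$, hence injectivity of $\pi_s$; but ``the semigroup structure of $E$ upgrades this one-sided inverse to a two-sided one'' is an assertion, not an argument. In the pointwise topology $E$ is only a \emph{right}-topological semigroup, so you cannot pass to limits in expressions of the form $\lim g_n\circ\pi_{t_n}$, and in a compact right-topological semigroup an element with a left inverse need not have a right inverse. A workable repair: show every idempotent $u\in E$ is injective --- for each $\varepsilon$ the witnesses $k$ lie in a \emph{fixed} compact set $K_\varepsilon$, so along a net $\pi_{t_n}\to u$ one can pass to the limit through the jointly continuous action $T\times X\to X$ and get some $\pi_k\circ u$ uniformly $\varepsilon$-close to $\mathrm{id}_X$ --- whence $u=\mathrm{id}_X$; then a minimal left ideal of $E$ contains $\mathrm{id}_X$, so $E$ is a group and each $\pi_t$ is bijective. (b) \emph{Equicontinuity}: you explicitly leave this open. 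The circularity you describe is real: with the semigroup convention $Kt\cap A\neq\emptyset$ one only gets $\pi_k\circ\pi_t$ uniformly close to $\mathrm{id}_X$, i.e.\ the small perturbation sits on the wrong side of $\pi_t$ to be absorbed by equicontinuity of $\{\pi_k\,|\,k\in K\}$, and nothing in your sketch actually derives from u.a.p.\ the joint continuity (equivalently, the equicontinuous action) of the enveloping group that you say would close the argument. As written, the proposal establishes only one of the two implications.
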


Thus, \textit{u.a.p.} $\Rightarrow$ equicontinuous $\Rightarrow$ \textit{w.a.p.}, for any semiflow on the compact T$_2$-space $X$. It is well known that their converses are false in general.
In this paper, we will be mainly concerned with the following basic question:
\begin{quote}
 \textit{When does `weakly almost periodic' imply `equicontinuous' or `uniformly almost periodic' and imply `uniquely ergodic' for a semiflow on $X$?}
\end{quote}

See Theorems~\ref{thm2.15A}, \ref{thm2.16A}, \ref{thm2.27}, \ref{thm2.31A}, \ref{thm2.32A}, and \ref{thm2.34A} for the first part of this question, and Theorems~\ref{thm3.5} and \ref{thm3.6} for the second part.
%%%%%%%%%%%%%%%%%%%%%%%%%%%%%%%%%%%%%%%%%%%%%%%%%%%%%%%%%%%%%%%%%%%%
%%%%%%%%%%%%%%%%%%%%%%%%%%%%%%%%%%%%%%%%%%%%%%%%%%%%%%%%%%%%%%%%%%%%
\section{Uniform almost periodicity of semiflows}\label{sec2}%%%%%%%
In this section we will provide a series of conditions under which weakly almost periodic implies equicontinuous or uniformly almost periodic, mainly based on the recent work \cite{AD}.

Moreover, we will generalize a theorem of Gottschalk~\cite{G} from abelian group action to abelian semigroup case in $\S\ref{sec2.12}$, by using completely different approaches.

\subsection{Ellis' semigroup}\label{sec2.1}%%%
Let $X^X$ be the set of all functions from $X$ to itself, continuous or not, which is a compact
T$_2$-space under the topology $\mathfrak{p}$ of pointwise convergence as follows: a net $\{p_n\}$ in $X^X$ converges to $q$ if and only if $p_n(x)\to q(x)$ for each $x\in X$. A subbase for $\mathfrak{p}$ is the family of all subsets of the form $\{f\,|\,f(x)\in U\}$, where $x$ is a point of $X$ and $U$ is open in $X$. See \cite{E69,Fur}.

The space $X^X$  is naturally provided with a semigroup structure: if $p,q\in X^X$ then $pq\colon X\rightarrow X$ defined by $x\mapsto p(qx)$ is the composition of $p$ and $q$.

\begin{defn}[{cf.~\cite{E69, Fur, Aus}}]
Let $(T,X,\pi)$ be a semiflow. Then for any $t\in T$, $\pi_t$ is a continuous self-map of $X$, hence an element of the space $X^X$.
\begin{enumerate}
\item The \textit{Ellis semigroup} $E(T,X)$, or simply $E(X)$, of $(T,X,\pi)$ is the closure of the semigroup $\{\pi_t\,|\,t\in T\}$ in $X^X$ with respect to the topology $\mathfrak{p}$.

\item An element $u\in E(X)$ is called an \textit{idempotent} in $E(X)$ if $u^2=u$.
\end{enumerate}

Since $E(X)$ is a compact right-topological semigroup, i.e., $E(X)$ is a semigroup and a compact T$_2$-space such that $R_q\colon p\mapsto pq$ is continuous under $\mathfrak{p}$ for any $q\in E(X)$, hence there always exist idempotents in $E(X)$ (cf.,~e.g.,~\cite{Fur} and \cite[Lemma~6.6]{Aus}).
\end{defn}

\begin{defn}[\cite{Fur, AD}]\label{def2.2}%%%
A point $x$ of $X$ is called a \textit{distal point} for $(T,X)$ if there exists no point $y$ of $\textrm{cls}_XTx$, $y\not=x$, such that there is a net $\{t_n\}$ in $T$ with $\lim t_nx=\lim t_ny$. Further if $(T,X)$ is pointwise distal, then $(T,X)$ is called a \textit{distal semiflow}.

It is easy to check that $(T,X)$ is distal \textit{iff} given any $x,y\in X, x\not=y$, there is an $\varepsilon\in\mathscr{U}_X$ such that $(tx,ty)\not\in\varepsilon$ for each $t\in T$.
\end{defn}

Following the framework of Robert~Ellis, the distal has the following characterization by using Ellis' semigroup for semiflows:

\begin{lem}[{\cite[Lemma~3.8]{AD}}]\label{lem2.3}%%%
A semiflow $(T,X)$ is distal if and only if its Ellis semigroup $E(X)$ is a group with the neutral element $\textit{id}_X$.
\end{lem}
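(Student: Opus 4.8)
The plan is to pass from the dynamical notion of distality to the purely algebraic condition that every element of $E(X)$ be injective, and then to extract the group structure from the idempotent theory of compact right-topological semigroups. First I would record the standard reformulation of proximality inside $E(X)$: two points $x,y$ of $X$ satisfy $px=py$ for some $p\in E(X)$ if and only if there is a net $\{t_n\}$ in $T$ with $\lim t_nx=\lim t_ny$. The forward implication follows because $p=\lim\pi_{t_n}$ for a suitable net (the set $\{\pi_t\,|\,t\in T\}$ being dense in $E(X)$), so that $px=\lim t_nx$ and $py=\lim t_ny$; the backward implication follows by passing to a subnet with $\pi_{t_n}\to p\in E(X)$, which exists by compactness of $E(X)$, and noting that $(t_nx,t_ny)$ entering every $\varepsilon\in\mathscr{U}_X$ forces $px=py$ in the compact Hausdorff space $X$. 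Combined with the characterization of distality in Definition~\ref{def2.2}, this shows that $(T,X)$ is distal if and only if every $p\in E(X)$ is injective on $X$.

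The direction ``$E(X)$ a group with identity $\textit{id}_X$ $\Rightarrow$ distal'' is then immediate: if $px=py$ with $p\in E(X)$, then applying $p^{-1}$ and using associativity of composition gives $x=(p^{-1}p)x=p^{-1}(px)=p^{-1}(py)=(p^{-1}p)y=y$, so every element of $E(X)$ is injective and $(T,X)$ is distal.

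For the converse I would argue as follows. Since $e\in T$ gives $\pi_e=\textit{id}_X\in E(X)$, the map $\textit{id}_X$ is automatically the two-sided identity of the monoid $E(X)$, so it only remains to produce inverses. Injectivity of every element forces any idempotent $u\in E(X)$ to equal $\textit{id}_X$, because $u(ux)=u(x)$ together with injectivity yields $ux=x$ for all $x$; hence $\textit{id}_X$ is the unique idempotent of $E(X)$. Now fix $p\in E(X)$ and consider $E(X)p=R_p(E(X))$, which is compact as the continuous image of $E(X)$ under the right translation $R_p$, hence closed, and is a subsemigroup, so that it is itself a compact right-topological semigroup. By the Ellis--Namakura lemma it contains an idempotent, which must be $\textit{id}_X$; thus $qp=\textit{id}_X$ for some $q\in E(X)$, i.e.\ $p$ has a left inverse. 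Applying the same to $q$ produces $r$ with $rq=\textit{id}_X$, and then $r=r\,\textit{id}_X=r(qp)=(rq)p=p$, whence $pq=\textit{id}_X$ as well. Therefore every $p\in E(X)$ has a two-sided inverse and $E(X)$ is a group with identity $\textit{id}_X$.

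The main obstacle is conceptual rather than computational: because elements of $E(X)$ need not be continuous, one cannot deduce surjectivity from injectivity by a compactness argument, so the inverses must be produced algebraically from the unique idempotent $\textit{id}_X$ through the idempotent machinery for compact right-topological semigroups. The points requiring care are the compactness-plus-subnet step in the proximality reformulation and the verification that $E(X)p$ is a closed subsemigroup to which the Ellis--Namakura lemma applies.
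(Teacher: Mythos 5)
Your proof is correct. Note that the paper does not prove this lemma at all --- it is quoted verbatim from \cite[Lemma~3.8]{AD} --- so there is no in-text argument to compare against; your route (distality $\Leftrightarrow$ injectivity of every $p\in E(X)$ via nets, uniqueness of the idempotent, and left inverses from an idempotent in the closed right-topological subsemigroup $E(X)p$) is the classical Ellis argument that the cited reference follows, and all the delicate points (compactness/subnet step, $E(X)p$ closed, the identity being $\textit{id}_X=\pi_e$) are handled properly.
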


Then for any flow, \textit{u.a.p.} can be characterized by equicontinuous and Ellis' semigroup as follows, (3) of which is equivalent to that $E(T,X)$ is a group of self-homeomorphisms of $X$.

\begin{thm}[{Ellis~\cite[Proposition~4.4]{E69}}]\label{thm2.4}%%%
Let $(T,X)$ be a flow with phase group $T$; then the following statements are pairwise equivalent:
\begin{enumerate}
\item[$(1)$] $(T,X)$ is equicontinuous.
\item[$(2)$] $(T,X)$ is uniformly almost periodic.
\item[$(3)$] $E(T,X)$ is a group of continuous self-maps of $X$.
\end{enumerate}
\end{thm}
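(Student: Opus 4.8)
The plan is to deduce the equivalence $(1)\Leftrightarrow(2)$ almost immediately from Theorem~\ref{thm1.6}, and then to handle the Ellis-semigroup condition $(3)$ through the two implications $(1)\Rightarrow(3)$ and $(3)\Rightarrow(1)$. For the first equivalence, the point is that when $T$ is a group each $\pi_t$ is a homeomorphism of $X$ onto itself, with two-sided inverse $\pi_{t^{-1}}$ (since $\pi_{t^{-1}}\pi_t=\pi_{t^{-1}t}=\pi_e=\textit{id}_X$). In particular every $\pi_t$ is a self-surjection of $X$, so the extra surjectivity hypothesis appearing in Theorem~\ref{thm1.6} is automatically satisfied. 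Hence that theorem collapses to ``uniformly almost periodic $\iff$ equicontinuous,'' which settles $(1)\Leftrightarrow(2)$.

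For $(1)\Rightarrow(3)$ I would extract two consequences of equicontinuity. First, every element of $E(X)$ is continuous: on an equicontinuous family the topology $\mathfrak{p}$ of pointwise convergence coincides with that of uniform convergence, and a uniform limit of continuous maps is continuous, so the $\mathfrak{p}$-closure $E(X)$ of $\{\pi_t\mid t\in T\}$ consists of continuous self-maps. Second, the flow is \emph{distal}: given $x\neq y$ and any $\varepsilon\in\mathscr{U}_X$, choose $\delta$ with $T\delta\subseteq\varepsilon$; if some net had $t_nx$ and $t_ny$ tending to a common limit, then eventually $(t_nx,t_ny)\in\delta$, and pulling back by $t_n^{-1}$ (here $T$ being a group is essential) gives $(x,y)\in\varepsilon$ for every $\varepsilon$, forcing $x=y$. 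By Lemma~\ref{lem2.3}, distality makes $E(X)$ a group with neutral element $\textit{id}_X$. Combining the two facts, $E(X)$ is a group of continuous self-maps of $X$, which is exactly $(3)$.

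The substance of the theorem is $(3)\Rightarrow(1)$, and this is the step I expect to be the main obstacle. The key observation is that once every element of $E(X)$ is continuous, left multiplication becomes continuous as well: for continuous $q$ the map $p\mapsto qp$ satisfies $(qp)(x)=q(p(x))$, which is $\mathfrak{p}$-continuous in $p$ for each fixed $x$ because $q$ is continuous and evaluation $p\mapsto p(x)$ is $\mathfrak{p}$-continuous. Together with the right-topological structure this exhibits $E(X)$ as a \emph{compact group with separately continuous multiplication}. From here I would invoke Ellis' joint-continuity theorem to upgrade $E(X)$ to a genuine compact topological group, and then a Namioka-type theorem (a separately continuous action of a locally compact, hence \v{C}ech-complete, group on a compact space is jointly continuous, using the homogeneity of $E(X)$) to conclude that the action $E(X)\times X\to X$ is \emph{jointly} continuous. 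The delicate passage from separate to joint continuity is precisely where the group structure is indispensable; compactness in $\mathfrak{p}$ together with pointwise continuity alone would not force it.

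Finally, joint continuity of the action plus compactness of $E(X)$ yields equicontinuity by a standard tube/Lebesgue-number argument. Given $\varepsilon\in\mathscr{U}_X$, the map $(p,x,y)\mapsto(px,py)$ is continuous and sends the compact set $E(X)\times\{(x,x)\mid x\in X\}$ into the interior of $\varepsilon$, so compactness of $E(X)$ produces a single $\delta\in\mathscr{U}_X$ for which $(x,y)\in\delta$ implies $(px,py)\in\varepsilon$ uniformly in $p\in E(X)$; restricting $p$ to $\{\pi_t\mid t\in T\}$ gives $T\delta\subseteq\varepsilon$, i.e. $(1)$. This closes the cycle $(1)\Rightarrow(3)\Rightarrow(1)$ and, with $(1)\Leftrightarrow(2)$ already in hand, establishes the pairwise equivalence of all three statements.
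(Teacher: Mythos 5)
The paper does not prove Theorem~\ref{thm2.4} at all---it is imported verbatim from Ellis's book---so there is no internal argument to compare against; judged on its own, your proof is correct and is essentially the classical argument underlying the cited source. The routine parts are fine: $(1)\Leftrightarrow(2)$ follows from Theorem~\ref{thm1.6} once one notes each $\pi_t$ is invertible, and $(1)\Rightarrow(3)$ correctly combines the Ascoli-type fact that the $\mathfrak{p}$-closure of an equicontinuous family consists of continuous maps with the standard ``pull back by $t_n^{-1}$'' distality argument and Lemma~\ref{lem2.3}. For $(3)\Rightarrow(1)$ you correctly identify that continuity of all of $E(X)$ makes left translations $\mathfrak{p}$-continuous and the evaluation action separately continuous, and you then delegate the hard work to Ellis's theorem that a locally compact semitopological group is a topological group and to the joint-continuity theorem for separately continuous actions, finishing with a tube-lemma/entourage compactness argument; this is exactly how the result is proved in the literature, and the apparent internal shortcut---deducing $(3)\Rightarrow(1)$ from Theorem~\ref{thm2.16A} via distality, pointwise almost periodicity and Lemma~\ref{lem2.6}---would be circular, since the proof of Theorem~\ref{thm2.16A} itself invokes Theorem~\ref{thm2.4}.
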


Condition (1) $\Leftrightarrow$ (2) of Theorem~\ref{thm2.4} has already been generalized to semiflows by the author of this paper and Xiao (cf.~Theorem~\ref{thm1.6} in $\S\ref{sec1}$).

The proof of Theorem~\ref{thm1.6} is based on the following theorem which itself will be needed as well as in our later arguments.

\begin{thm}[{\cite[Theorem~2.3]{AD}}]\label{thm2.5}%%%
If $(T,X)$ is an equicontinuous semiflow with each $t\in T$ a self-surjection of $X$, then $(T,X)$ is a distal semiflow.
\end{thm}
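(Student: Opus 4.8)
The plan is to reduce everything to the Ellis-semigroup criterion furnished by Lemma~\ref{lem2.3}: since a semiflow is distal exactly when $E(X)$ is a group with neutral element $\textit{id}_X$, it suffices to establish that group structure. The two hypotheses will enter through separate channels. Equicontinuity will be used to control pointwise limits in $X^X$, and the assumption that each $\pi_t$ is onto will be propagated to all of $E(X)$ and then converted into invertibility. Throughout I rely only on the fact, recorded in the excerpt, that $E(X)$ is a compact right-topological semigroup containing $\textit{id}_X=\pi_e$ and possessing idempotents.

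First I would prove that every $p\in E(X)$ is a continuous self-surjection of $X$. Continuity is routine: the family $\{\pi_t\}$ is equicontinuous, its pointwise closure in $X^X$ remains equicontinuous, and each $p\in E(X)$ is a pointwise limit of some net $\{\pi_{t_n}\}$, hence continuous. For surjectivity, fix such a $p=\lim_n\pi_{t_n}$ and a target $z\in X$; choose $w_n$ with $\pi_{t_n}(w_n)=z$, and pass to a subnet with $w_n\to w$ by compactness of $X$. I would then compare $\pi_{t_n}(w_n)=z$ with $\pi_{t_n}(w)\to p(w)$: given $\varepsilon\in\mathscr{U}_X$, equicontinuity yields $\delta$ with $T\delta\subseteq\varepsilon$, so eventually $(w_n,w)\in\delta$ forces $(z,\pi_{t_n}(w))\in\varepsilon$, and letting $\varepsilon$ shrink gives $p(w)=z$. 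This is exactly where equicontinuity is indispensable — it is what ties $\pi_{t_n}(w_n)$ to $\pi_{t_n}(w)$ uniformly in $n$ — and I expect it to be the main obstacle, since a mere pointwise limit of surjections need not be surjective without such uniform control.

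The next step is immediate once surjectivity is available. If $u\in E(X)$ is any idempotent, then $u$ is onto and $u\circ u=u$; writing an arbitrary point as $z=u(w)$ gives $u(z)=u(u(w))=u(w)=z$, whence $u=\textit{id}_X$. Thus $\textit{id}_X$ is the \emph{unique} idempotent of $E(X)$.

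Finally I would extract the group structure. Being a compact right-topological semigroup, $E(X)$ has a minimal left ideal $L$; for some $p$ one has $L=E(X)p=R_p(E(X))$, the continuous image of the compact $E(X)$, so $L$ is closed and hence a compact right-topological subsemigroup. Therefore $L$ contains an idempotent, which by the previous paragraph must be $\textit{id}_X$, and consequently $L=E(X)\,\textit{id}_X=E(X)$. So $E(X)$ is itself a minimal left ideal and has no proper left ideal; in particular $E(X)s=E(X)$ for every $s\in E(X)$, which produces an element $r$ with $rs=\textit{id}_X$. A monoid in which every element has a left inverse is a group, so $E(X)$ is a group with neutral element $\textit{id}_X$, and Lemma~\ref{lem2.3} then delivers that $(T,X)$ is distal.
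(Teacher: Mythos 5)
This theorem is imported by the paper from \cite[Theorem~2.3]{AD} with no proof supplied in the text, so there is nothing internal to compare your argument against line by line; judged on its own, your proof is correct and complete relative to the facts the paper does record (Lemma~\ref{lem2.3}, the existence of idempotents in compact right-topological semigroups, and $\textit{id}_X=\pi_e\in E(X)$). The three stages are all sound: equicontinuity passes to the pointwise closure (using that closed entourages form a base, so limits preserve membership in a given $\varepsilon$ up to closure, and $\bigcap_\varepsilon\overline{\varepsilon}=\Delta$ since $X$ is Hausdorff), and the same device correctly upgrades ``pointwise limit of surjections'' to ``surjection'' --- you are right that this is the step where equicontinuity is indispensable, as $f_n(x)=\min(nx,1)$ on $[0,1]$ shows for bare pointwise limits. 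The observation that a surjective idempotent is the identity, and the minimal-left-ideal argument forcing $E(X)$ itself to be a minimal left ideal (hence $E(X)s=E(X)$ for all $s$, hence left inverses, hence a group since a monoid with left inverses is a group), are all standard and correctly executed. This Ellis-semigroup route is essentially the natural one for this statement, and is in the same spirit as the machinery the paper deploys elsewhere (e.g.\ in Theorems~\ref{thm2.15A} and \ref{thm2.16A}); the only mild redundancy is that continuity of the elements of $E(X)$ is not actually needed to reach distality via Lemma~\ref{lem2.3}, though it costs nothing to record it.
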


The condition that each $t\in T$ is a self-surjection of $X$ is essential for this theorem (cf.~\cite{AD}). The case that $T$ is abelian has been suggested to the author in personal communications independently by E.~Akin and X.~Ye.

\subsection{Characterization of weak almost periodicity}\label{sec2.2}%%%
The following Lemma~\ref{lem2.6} is just the semiflow version of Ellis and Nerurkar's \cite[Proposition~II.2]{EN} for flows, which characterizes \textit{w.a.p.} in terms of Ellis' semigroup.

Although the proof of Lemma~\ref{lem2.6} is almost same as that of the flow case, we will prove it here for reader's convenience.

\begin{lem}\label{lem2.6}%%%
$(T,X)$ is a weakly almost periodic semiflow if and only if its Ellis semigroup $E(T,X)\subset C(X,X)$.
\end{lem}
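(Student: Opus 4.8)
The plan is to characterize $w.a.p.$ via the pointwise-convergence characterization of $E(T,X)$ and the classical fact that $C(X,\mathbb{R})$-composition interchanges with evaluation. The essential bridge is that $Tf$ being relatively compact in the pointwise topology on $C(X,\mathbb{R})$ for every $f$ should translate, via duality between functions on $X$ and points of $X^X$, into every limit element $p\in E(T,X)$ sending continuous functions to continuous functions, i.e. $f\circ p\in C(X,\mathbb{R})$ for all $f$, which forces $p\in C(X,X)$.

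First I would fix notation: recall that $E(T,X)=\mathrm{cls}_{X^X}\{\pi_t\mid t\in T\}$ under the topology $\mathfrak{p}$ of pointwise convergence, and that a net $\pi_{t_n}\to p$ in $X^X$ means $t_nx\to p(x)$ for every $x\in X$. The key observation to set up is the identity $f_t=f\circ\pi_t$ for each $f\in C(X,\mathbb{R})$ and $t\in T$: the function $x\mapsto f(tx)$ is precisely the composite $f\circ\pi_t$. More generally, for any $p\in X^X$ define $f\circ p\colon X\to\mathbb{R}$, $x\mapsto f(p(x))$. I would then record the continuity lemma that the composition map $p\mapsto f\circ p$ from $X^X$ (with $\mathfrak{p}$) to $\mathbb{R}^X$ (with pointwise convergence) is continuous for each fixed $f\in C(X,\mathbb{R})$; this is immediate from the definition of $\mathfrak{p}$ together with continuity of $f$ on the compact space $X$.

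For the forward implication, assume $(T,X)$ is $w.a.p.$ and take any $p\in E(T,X)$; I must show $p$ is continuous, i.e. $p\in C(X,X)$. Choose a net $\{\pi_{t_n}\}$ with $\pi_{t_n}\to p$ in $\mathfrak{p}$. For each $f\in C(X,\mathbb{R})$, weak almost periodicity gives that $Tf$ is relatively compact in the pointwise topology, so a subnet of $\{f_{t_n}\}=\{f\circ\pi_{t_n}\}$ converges pointwise to some $g\in C(X,\mathbb{R})$; but by the continuity lemma $f\circ\pi_{t_n}\to f\circ p$ pointwise, forcing $g=f\circ p$, whence $f\circ p\in C(X,\mathbb{R})$. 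Thus $f\circ p$ is continuous for every $f\in C(X,\mathbb{R})$, and since $X$ is compact $\mathrm{T}_2$, hence completely regular, the continuous functions separate points and generate the topology, so this forces $p\colon X\to X$ to be continuous. For the converse, assume $E(T,X)\subseteq C(X,X)$ and fix $f\in C(X,\mathbb{R})$; I would show $\mathrm{cls}_{\mathfrak{p}}(Tf)\subseteq C(X,\mathbb{R})$, which since $Tf\subseteq[\min f,\max f]^X$ is already in a compact product, gives relative compactness in the pointwise topology on $C(X,\mathbb{R})$. Any pointwise limit $g$ of a net $f_{t_n}=f\circ\pi_{t_n}$ satisfies $g=f\circ p$ where $p=\lim\pi_{t_n}\in E(T,X)$ (after passing to a convergent subnet, using compactness of $X^X$); since $p$ is continuous and $f$ is continuous, $g=f\circ p\in C(X,\mathbb{R})$.

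The main obstacle I anticipate is the deduction in the forward direction that "$f\circ p$ continuous for all $f$" implies $p$ itself is continuous; this is where compact Hausdorff (hence complete regularity, so that $C(X,\mathbb{R})$ separates points and closed sets, and the weak topology induced by $C(X,\mathbb{R})$ equals the original topology) is used crucially. I would prove it by showing $p$ is continuous at each point: if $x_\alpha\to x$ but $p(x_\alpha)\not\to p(x)$, pass to a subnet with $p(x_\alpha)\to z\neq p(x)$, choose $f\in C(X,\mathbb{R})$ with $f(z)\neq f(p(x))$, and derive a contradiction with continuity of $f\circ p$ evaluated along the net. The remaining steps are routine once the continuity lemma and this separation argument are in place.
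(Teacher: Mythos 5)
Your proposal is correct and follows essentially the same route as the paper's proof: the forward direction deduces continuity of $f\circ p$ from a pointwise limit of $f_{t_n}$, and the converse realizes the closure of $Tf$ as the continuous image (under $p\mapsto f\circ p$) of the compact set $E(T,X)$. The only difference is that you explicitly justify the step ``$f\circ p$ continuous for every $f\in C(X,\mathbb{R})$ implies $p\in C(X,X)$'' via complete regularity of the compact T$_2$ space $X$, which the paper asserts without proof.
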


\begin{proof}
Let $(T,X)$ be \textit{w.a.p.} and $p\in E(X)$. To show $p$ is continuous, it is enough to verify that for any $f\in C(X,\mathbb{R})$ the function $f\circ p\colon X\rightarrow\mathbb{R}$ is continuous. For this, let $t_n\to p$ in $E(X)$; then $f_{t_n}\to f\circ p$ in pointwise topology. Then by Definition~\ref{def1.5}, there is no loss of generality in assuming that $f_{t_n}\to g$, for some $g\in C(X,\mathbb{R})$, in the sense of the pointwise topology. Thus $f\circ p=g$ is continuous.

Conversely, let each $p\in E(X)$ be continuous. Let $f\in C(X,\mathbb{R})$ be any given and we define $\phi(p)=f\circ p\ \forall p\in E(X)$. Note that if $x_n\to x$ in $X$, then $\lim\phi(p)(x_n)=f(p(x))=\phi(p)(x)$. Thus $\phi(p)\in C(X,\mathbb{R})$. Now we claim that the map $\phi\colon E(X)\rightarrow C(X,\mathbb{R})$ is continuous under the pointwise topologies. To see this let $p_n\to p$ in $E(X)$; then
\begin{gather*}\lim\phi(p_n)(x)=\lim f(p_n(x))=f(p(x))=\phi(p)(x).\end{gather*}
Thus the image of $\phi$ is compact and it contains the set $Tf=\phi(T)$. Thus $(T,X)$ is a \textit{w.a.p.} semiflow.

The proof of Lemma~\ref{lem2.6} is thus completed.
\end{proof}

\begin{defn}[\cite{GH}]\label{def2.7}%%%
Let $(T,X)$ be a semiflow with any phase space $X$ and with any phase semigroup $T$.
\begin{enumerate}
\item By $\textrm{Aut}\,(T,X)$ we denote the automorphism group of $(T,X)$; that is, $\textrm{Aut}\,(T,X)$ is the group of all self-homeomorphisms $a$ of $X$ such that $at=ta$ for all $t\in T$.

\item If $\textrm{Aut}\,(T,X)x=X$ for some $x\in X$ (so for all $x\in X$), then $(T,X)$ is called a \textit{universally transitive} semiflow (or \textit{algebraically transitive} in \cite{G,SKBS}).
\end{enumerate}
\end{defn}

\begin{lem}
Let $(T,X)$ be a universally transitive semiflow. If each $p\in E(T,X)$ has at least a continuous point, then $(T,X)$ is weakly almost periodic.
\end{lem}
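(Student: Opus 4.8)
The plan is to use the characterization of weak almost periodicity furnished by Lemma~\ref{lem2.6}: it suffices to prove that every $p\in E(T,X)$ is continuous on all of $X$, i.e. that $E(T,X)\subseteq C(X,X)$. By hypothesis each such $p$ is continuous at \emph{some} point $x_0\in X$, so the whole task is to propagate this single point of continuity to every point of $X$, and the natural device for doing so is the transitive automorphism group $\textrm{Aut}\,(T,X)$.

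First I would record a commutation fact: for every $a\in\textrm{Aut}\,(T,X)$ and every $p\in E(T,X)$ one has $a\circ p=p\circ a$. To see this, write $p=\lim_n\pi_{t_n}$ in the topology $\mathfrak{p}$ of pointwise convergence for some net $\{t_n\}$ in $T$. Because $a$ is a self-homeomorphism of $X$, the left-translation $q\mapsto a\circ q$ is continuous on $X^X$ under $\mathfrak{p}$ (if $q_n(x)\to q(x)$ for every $x$, then $a(q_n(x))\to a(q(x))$ since $a$ is continuous); and the right-translation $q\mapsto q\circ a$ is continuous as well, being evaluation composed with the fixed point $a(x)$. Since $a$ commutes with every $\pi_{t_n}$, taking $\mathfrak{p}$-limits on both sides of $a\circ\pi_{t_n}=\pi_{t_n}\circ a$ and using the Hausdorffness of $X^X$ yields $a\circ p=p\circ a$.

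Next I would exploit universal transitivity. Fix $p\in E(T,X)$, let $x_0$ be a point of continuity of $p$, and let $y\in X$ be arbitrary. By Definition~\ref{def2.7}(2) there exists $a\in\textrm{Aut}\,(T,X)$ with $a(x_0)=y$. Since $a^{-1}\in\textrm{Aut}\,(T,X)$ as well, the commutation relation gives $p=a\circ p\circ a^{-1}$. Now $a^{-1}$ is continuous at $y$ with $a^{-1}(y)=x_0$, the map $p$ is continuous at $x_0$, and $a$ is continuous at $p(x_0)$; hence the composite $a\circ p\circ a^{-1}$ is continuous at $y$. As $y$ was arbitrary, $p$ is continuous on all of $X$, and as $p\in E(T,X)$ was arbitrary, $E(T,X)\subseteq C(X,X)$. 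Lemma~\ref{lem2.6} then delivers that $(T,X)$ is weakly almost periodic.

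The substantive point — and the only place where care is really needed — is the commutation step: $E(X)$ is merely a \emph{right}-topological semigroup, so left multiplication is not continuous for a general element, and one must use precisely the continuity of the automorphism $a$ to secure continuity of $q\mapsto a\circ q$ in the pointwise topology. Everything after that is a formal composition-of-continuous-maps argument, and the single hypothesis ``$p$ has at least one continuity point'' is exactly what is transported around the orbit $\textrm{Aut}\,(T,X)x_0=X$.
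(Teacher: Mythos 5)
Your proposal is correct and follows essentially the same route as the paper: establish $ap=pa$ for all $a\in\mathrm{Aut}\,(T,X)$ by passing to the limit along a net $t_n\to p$, then use universal transitivity to write $p=a\circ p\circ a^{-1}$ and transport the continuity point $x_0$ to an arbitrary point, concluding via Lemma~\ref{lem2.6}. The paper's proof is just a terser version of yours; your explicit justification of the limit-commutation step and of the composition-of-continuity argument fills in exactly the details the paper leaves implicit.
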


\begin{proof}
Let $p\in E(T,X)$ which is continuous at a point $x_0\in X$ and let $T\ni t_n\to p$ under the pointwise topology. Then by $at_n=t_na$ for all $a\in\textrm{Aut}\,(T,X)$, $ap=pa$ for all $a\in\textrm{Aut}\,(T,X)$. Given any $x\in X$, there is some $a\in\textrm{Aut}\,(T,X)$ such that $x=ax_0$ by universal transitivity. Thus $p$ is continuous at $x$ and so $p\in C(X,X)$. This shows that $(T,X)$ is \textit{w.a.p.} by Lemma~\ref{lem2.6}.
\end{proof}
%%%%%%%%%%%%%%%%%%%%%%%%%%%%%%%%%%%%%%%%%%%%%
\subsection{Almost periodic points}%%%
\begin{defn}[\cite{GH,E69,Fur,Aus}]
$(T,X)$ be a semiflow. Then
\begin{enumerate}
\item A point $x\in X$ is said be \textit{almost periodic} for $(T,X)$ if given any neighborhood $V$ of $x$, $N_T(x,V):=\{t\in T\,|\,tx\in V\}$ is syndetic in $T$.

\item[{}] If each point of $X$ is almost periodic, then $(T,X)$ is called a \textit{pointwise almost periodic} semiflow.

\item A non-empty closed subset of $X$ is called \textit{minimal} for $(T,X)$ if it is invariant and it contains no invariant closed proper subset of $X$.

\item[{}] Thus a non-empty subset of $X$ is minimal if and only if it is the orbit closure of each of its points.
\end{enumerate}
\end{defn}

It is a well-known fact that $x\in X$ is almost periodic for $(T,X)$ \textit{iff} it is a minimal point in the sense that $\textrm{cls}_XTx$ is a minimal subset of $(T,X)$ (cf., e.g.,~\cite[Lemma~1.5 in $\S1.1$]{AD}). Thus pointwise almost periodic is equivalent to pointwise minimal. See \cite{CD} for some other characterizations of almost periodic points of semiflows.

\subsection{Auslander's theorem}%%%
In view of Lemma~\ref{lem2.6}, Ellis' theorem, condition (1)/(2) $\Leftrightarrow$ (3) of Theorem~\ref{thm2.4}, was already sharpened by Joseph Auslander as follows:

\begin{thm}[{Auslander~\cite[Theorem~4.6]{Aus}}]\label{thm2.10A}%%%
Let $(T,X)$ be a minimal flow. Then $(T,X)$ is weakly almost periodic if and only if it is equicontinuous.
\end{thm}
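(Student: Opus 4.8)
The plan is to prove Auslander's theorem by combining Lemma~\ref{lem2.6} with the Ellis-group characterization in Theorem~\ref{thm2.4} (condition $(3)$), so the task reduces to showing that, for a \emph{minimal} flow, $E(T,X)\subseteq C(X,X)$ forces $E(T,X)$ to be a \emph{group of homeomorphisms}. One direction is immediate: equicontinuity always implies \textit{w.a.p.} (this is recorded in the remarks after Definition~\ref{def1.5}), so I only need the forward implication. Assume therefore that $(T,X)$ is minimal and \textit{w.a.p.}; by Lemma~\ref{lem2.6} every $p\in E(X)$ is continuous, i.e.\ $E(X)\subseteq C(X,X)$. My goal is to upgrade this to ``$E(X)$ is a group of homeomorphisms'' and then invoke $(3)\Rightarrow(1)$ of Theorem~\ref{thm2.4}.

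First I would exploit minimality at the level of the Ellis semigroup. For a minimal flow, every orbit closure is all of $X$, and a standard fact is that minimality of $(T,X)$ is equivalent to $E(X)$ acting minimally, i.e.\ $E(X)x=X$ for every $x\in X$. Using the existence of an idempotent $u=u^2\in E(X)$ (guaranteed since $E(X)$ is compact right-topological), I would examine its fixed-point set $Y=uX=\{x:ux=x\}$. Because $u$ is now \emph{continuous}, $Y$ is a closed, nonempty subset of $X$; and since $u$ is an element of $E(X)$ it commutes suitably with the flow so that $Y$ is $T$-invariant. Minimality then forces $Y=X$, whence $u=\mathit{id}_X$. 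Thus the only idempotent in $E(X)$ is the identity.

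Once the identity is the unique idempotent, I would deduce that $E(X)$ is a group: in a compact right-topological semigroup, for each $p$ the closed ideal $E(X)p$ contains an idempotent, which must be $\mathit{id}_X$, giving a left inverse for $p$; a symmetric argument using continuity produces the two-sided inverse. Continuity of every element together with invertibility yields continuity of the inverse via compactness (a continuous bijection of a compact T$_2$-space is a homeomorphism), so each $p\in E(X)$ is a self-homeomorphism and $E(X)$ is a group of homeomorphisms. This is precisely condition $(3)$ of Theorem~\ref{thm2.4}, and applying $(3)\Rightarrow(1)$ concludes that $(T,X)$ is equicontinuous.

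The main obstacle I anticipate is the step showing $u=\mathit{id}_X$, i.e.\ correctly leveraging \emph{minimality} to pin down the idempotent. The subtlety is that elements of $E(X)$ need not commute with $T$ in the naive sense; rather one has $p\,\pi_t=\pi_t\,p$ only through the right-topological structure, so I must verify carefully that the fixed set $uX$ is genuinely $T$-invariant (using $t(ux)=u(tx)$ for limits of $\pi_t$) before minimality can be applied. The continuity of $u$, newly available from the \textit{w.a.p.} hypothesis via Lemma~\ref{lem2.6}, is exactly what makes $uX$ closed and hence a candidate minimal set; without continuity $uX$ need only be a possibly non-closed subset and the argument collapses—this is why the minimal hypothesis and the \textit{w.a.p.} hypothesis must be used in tandem rather than separately.
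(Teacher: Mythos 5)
Your reduction of the theorem to ``$E(X)$ is a group of continuous self-maps'' via Lemma~\ref{lem2.6} and Theorem~\ref{thm2.4}(3)$\Rightarrow$(1) is sound, as is the easy converse and the final step deducing the group structure once one knows every idempotent is $\mathit{id}_X$. The gap is exactly where you feared it: the $T$-invariance of $Y=uX=\{x: ux=x\}$. The identity $t(ux)=u(tx)$, i.e.\ $\pi_t\circ u=u\circ\pi_t$, is simply not available. If $\pi_{t_n}\to u$, then continuity of right multiplication gives $u\pi_t=\lim\pi_{t_nt}$, while continuity of $\pi_t$ gives $\pi_tu=\lim\pi_{tt_n}$; these are limits of different nets and coincide only when $T$ is abelian. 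Invariance of $\mathrm{Fix}(u)$ is equivalent to $u\pi_tu=\pi_tu$ for all $t$, i.e.\ to $\pi_tu\in uI$ (where $I$ is the minimal left ideal containing $u$); but a priori $\pi_tu$ lies in $vI$ for some \emph{other} idempotent $v\in J(I)$ (Lemma~\ref{lem2.14A}(5),(7),(8)), and ruling this out — i.e.\ proving $J(I)$ is a singleton — is the entire content of the theorem. Minimality by itself only gives that each $x\in X$ satisfies $vx=x$ for \emph{some} $v\in J(I)$ depending on $x$, which does not let you pin down a single $u$ with $uX=X$. So your argument assumes the key point (uniqueness of the idempotent, equivalently the deLeeuw--Glicksberg fact that the kernel of the compact \emph{semitopological} semigroup $E(X)$ is a group) rather than proving it.

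For comparison: the commutation trick you invoke is precisely how the paper handles the \emph{abelian} case in Theorem~\ref{thm2.15A} (there $\pi_{t_n}v=v\pi_{t_n}$ is legitimate and, combined with continuity of $u,v$, yields $vu=uu$ and hence $u=v$ by Lemma~\ref{lem2.14A}(9)). For non-abelian groups the paper's Theorem~\ref{thm2.16A} gets uniqueness of the idempotent only by passing to the flow $(T,I,\Pi)$, showing it is w.a.p., and then invoking Theorem~\ref{thm2.10A} itself to get equicontinuity and distality of $(T,I)$ — so that route is circular as a proof of Theorem~\ref{thm2.10A}. The paper's actual self-contained route to this statement is Theorem~\ref{thm2.24A} (groups are right \textit{C}-semigroups): w.a.p.\ makes each $\textsl{Equi}_\varepsilon(T,X)$ dense open via Troallic's Lemma~\ref{lem2.17A} (Theorem~\ref{thm2.19A}), and minimality plus Lemma~\ref{lem2.23A} upgrades one equicontinuity point to all of $X$. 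To repair your algebraic approach you would need to import that genericity/joint-continuity input (or the full deLeeuw--Glicksberg structure theorem) at the idempotent-uniqueness step; it cannot be obtained from Lemma~\ref{lem2.14A} and continuity of $u$ alone.
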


Recall that a semigroup $T$ is called \textit{amenable} if every semiflow $(T,X)$ admits an invariant Borel probability measure on $X$. Clearly, every abelian semigroup is amenable.

\begin{lem}[{\cite[Proposition~3.19]{AD}}]\label{lem2.11A}%%%
Let $(T,X)$ be a pointwise minimal semiflow with $T$ an amenable semigroup. Then each $t\in T$ is a surjection of $X$.
\end{lem}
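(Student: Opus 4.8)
The plan is to reduce to the minimal case and then exploit an invariant measure. Fix $t\in T$; I want to show $\pi_t(X)=X$, equivalently that every $y\in X$ lies in $tX$. Since $(T,X)$ is pointwise minimal, $M:=\textrm{cls}_XTy$ is a minimal set, and being closed and $T$-invariant it carries a subsemiflow $(T,M)$. As $\pi_t(M)=tM\subseteq M$ by invariance of $M$, it suffices to prove that $t$ maps $M$ onto itself, for then $y\in M=tM\subseteq tX$; since $y$ was arbitrary this gives $tX=X$.

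So I may assume $(T,X)=(T,M)$ is minimal. Because $T$ is amenable, the semiflow $(T,M)$ admits an invariant Borel probability measure $\mu$, i.e. $\mu(\pi_s^{-1}(A))=\mu(A)$ for every $s\in T$ and every Borel $A\subseteq M$. The first key step is to show that every nonempty open set $U\subseteq M$ has $\mu(U)>0$. Indeed, minimality says $\textrm{cls}_XTx=M$ for each $x\in M$, so the orbit of every point meets $U$; hence $\bigcup_{s\in T}\pi_s^{-1}(U)=M$. These are open sets, so by compactness finitely many already cover $M$, say $M=\bigcup_{i=1}^n\pi_{s_i}^{-1}(U)$, and then $1=\mu(M)\le\sum_{i=1}^n\mu(\pi_{s_i}^{-1}(U))=n\,\mu(U)$ by invariance, whence $\mu(U)\ge 1/n>0$.

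The second step computes $\mu(tM)$. Since $tx\in tM$ for every $x\in M$, we have $\pi_t^{-1}(tM)=M$, so invariance yields $\mu(tM)=\mu(\pi_t^{-1}(tM))=\mu(M)=1$. Now $tM=\pi_t(M)$ is compact, hence closed in $M$, so $M\setminus tM$ is open; were it nonempty the first step would force $\mu(M\setminus tM)>0$, contradicting $\mu(tM)=1$. Therefore $tM=M$, which completes the argument.

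I expect the only real obstacle to be handling invariance correctly for a noninvertible map: one must use the pushforward form $\mu\circ\pi_t^{-1}=\mu$ rather than any naive ``$\mu(\pi_t A)=\mu(A)$'', and the full-support lemma for minimal systems is exactly what converts this into surjectivity. The reduction to a minimal subsemiflow is where pointwise minimality enters, and placing the invariant measure on each minimal set $M$ (rather than on $X$, whose invariant measures may well miss some minimal sets) is the reason amenability must be invoked on the subsemiflows rather than only on $(T,X)$ itself.
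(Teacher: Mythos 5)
Your proof is correct; the paper itself gives no proof of this lemma, citing \cite[Proposition~3.19]{AD}, and your argument (restrict to each minimal set, take an invariant measure there via amenability, show it has full support by the compactness covering argument, then note $\mu(tM)=\mu(t^{-1}(tM))=1$ with $tM$ closed) is exactly the standard route used in that reference. Your closing remark is also on point: since $tM$ need not be invariant for a non-abelian semigroup, the full-support measure argument, rather than any minimality-of-closed-invariant-sets argument, is what is really needed.
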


A set $S\subseteq E(T,X)$ is called a \textit{topological semigroup} if $(f,g)\mapsto f\circ g$ of $S\times S$ to $S$ is a continuous map under the topology $\mathfrak{p}$ of pointwise convergence.

Further, in view of Theorem~\ref{thm1.6} and Lemma~\ref{lem2.6}, Auslander's theorem has recently been generalized as follows:

\begin{thm}[{\cite[Theorem~3.23]{AD}}]\label{thm2.12A}%%%
Let $(T,X)$ be a semiflow with $T$ an amenable semigroup and with a dense set of almost periodic points. Then $(T,X)$ is uniformly almost periodic if and only if it is weakly almost periodic such that $E(T,X)$ is a topological semigroup.
\end{thm}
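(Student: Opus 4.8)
The plan is to prove the two implications separately. The forward implication is the routine one: if $(T,X)$ is u.a.p., then by Theorem~\ref{thm1.6} it is equicontinuous, and hence w.a.p. by the remark following Definition~\ref{def1.5}. To see that $E(T,X)$ is then a topological semigroup, I would note that equicontinuity makes $\{\pi_t\}$ uniformly equicontinuous, a property inherited by its pointwise closure $E(X)$; on a uniformly equicontinuous family the topology $\mathfrak{p}$ of pointwise convergence coincides with that of uniform convergence, and for the uniform topology composition is jointly continuous (a three-$\varepsilon$ estimate resting on uniform continuity of a single continuous self-map of the compact space $X$). Hence multiplication on $E(X)$ is jointly $\mathfrak{p}$-continuous.

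For the converse I would first invoke Lemma~\ref{lem2.6}: w.a.p.\ gives $E(X)\subseteq C(X,X)$, so $E(X)$ is a compact \emph{topological} semigroup all of whose elements are continuous self-maps, and the density hypothesis says that the union of the minimal sets of $(T,X)$ is dense. Fix a minimal set $M$ and an almost periodic point $x_0\in M$; then $(T,M)$ is a w.a.p.\ subsemiflow and the orbit map $\rho\colon E(X)\to M$, $q\mapsto qx_0$, is a continuous surjection of compacta, hence a quotient map. The heart of the matter is to transport the joint continuity of the multiplication to the action on $M$: since $p(qx_0)=(pq)x_0=\rho(pq)$, the map $(p,q)\mapsto p(qx_0)$ is jointly continuous, and because $\mathrm{id}\times\rho$ is again a quotient map (Whitehead's theorem, $E(X)$ being compact hence locally compact), the action $E(X)\times M\to M$ is jointly continuous. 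Compactness then upgrades this to uniform equicontinuity of the family $E(X)|_M$, so $(T,M)$ is equicontinuous.

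Next I would pass to distality. By Lemma~\ref{lem2.11A} each $t$ is a surjection of $M$, so Theorem~\ref{thm2.5} makes $(T,M)$ distal and, by Lemma~\ref{lem2.3}, $E(T,M)$ a group whose only idempotent is $\mathrm{id}_M$. Every idempotent $u\in E(X)$ preserves $M$ and restricts to an idempotent of $E(T,M)$, whence $u|_M=\mathrm{id}_M$; as $M$ ranges over the dense union of minimal sets and $u$ is continuous, this forces $u=\mathrm{id}_X$. Thus $\mathrm{id}_X$ is the unique idempotent of $E(X)$. Since the closed subsemigroup generated by any $p\in E(X)$ is compact and contains an idempotent (necessarily $\mathrm{id}_X$), there is a net $p^{n_\alpha}\to\mathrm{id}_X$ with $n_\alpha\ge 1$; as $px=py$ forces $p^nx=p^ny$ for all $n$, passing to the limit yields $x=y$. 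Hence every element of $E(X)$ is injective, i.e.\ $(T,X)$ is distal and, by Lemma~\ref{lem2.3}, $E(X)$ is a group of homeomorphisms (each $p^{-1}\in E(X)\subseteq C(X,X)$).

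Finally, $E(X)$ is a compact group with jointly continuous multiplication, acting separately continuously on $X$; by Ellis' classical joint-continuity theorem for separately continuous actions of locally compact groups, the action $E(X)\times X\to X$ is jointly continuous, and compactness of $E(X)$ then forces $(T,X)$ to be equicontinuous. Since each $t$, being invertible in the group $E(X)$, is a self-surjection of $X$, Theorem~\ref{thm1.6} delivers u.a.p., completing the converse. I expect the main obstacle to be the transfer-of-continuity step of the second paragraph — extracting honest joint continuity of the action from the bare ``topological semigroup'' hypothesis — together with the globalization in the last paragraph: the per-minimal-set equicontinuity does not by itself patch to equicontinuity on all of $X$, since a distal system need have no transitive point, which is precisely why the detour through distality and Ellis' theorem appears unavoidable.
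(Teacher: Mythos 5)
Your proof is correct, but be aware that the paper never actually proves Theorem~\ref{thm2.12A}: it is imported wholesale from \cite[Theorem~3.23]{AD}, so the only internal comparison is with the proofs of the neighbouring generalizations (Theorems~\ref{thm2.15A}, \ref{thm2.16A} and Corollary~\ref{cor1}). Those all proceed through the algebra of minimal left ideals: fix a minimal left ideal $I\subseteq E(X)$, use Lemma~\ref{lem2.14A} (together with commutativity, or with the auxiliary \textit{w.a.p.} system $(T,I,\Pi)$) to show $I$ carries a unique idempotent $u$, invoke the density of almost periodic points to force $u=\textit{id}_X$, conclude that $I=E(X)$ is a group of continuous maps, and finish with Theorems~\ref{thm2.4} and \ref{thm1.6}. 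You arrive at the same pivot ($E(X)$ is a group of homeomorphisms) by a genuinely different route that never mentions ideals: joint continuity of multiplication is pushed down to a jointly continuous action $E(X)\times M\to M$ on each minimal set via the quotient map $q\mapsto qx_0$ (Whitehead is overkill here --- $\textit{id}\times\rho$ is already a continuous surjection of compacta onto a Hausdorff space, hence closed, hence quotient), whence $(T,M)$ is equicontinuous; amenability enters exactly where it does in the paper's Corollary~\ref{cor1}, through Lemma~\ref{lem2.11A} and Theorem~\ref{thm2.5}, to make $(T,M)$ distal and $\textit{id}_M$ the sole idempotent of $E(T,M)$; density then identifies the unique idempotent of $E(X)$ with $\textit{id}_X$; and distality of $(T,X)$ itself comes from the compact subsemigroup $\mathrm{cls}\{p^{n}\,|\,n\ge1\}$ --- a step that genuinely requires the topological-semigroup hypothesis, since in a merely right-topological $E(X)$ that closure need not be closed under composition. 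Your approach has the merit of isolating exactly where each hypothesis (joint continuity, amenability, density) bites; the ideal-theoretic approach gets $E(X)=I$ essentially for free and avoids the appeal to Ellis's 1957 joint-continuity theorem in your last paragraph, which you could in any case replace by the implication $(3)\Rightarrow(1)$ of Theorem~\ref{thm2.4}, exactly as the paper does in proving Theorem~\ref{thm2.15A}. The forward direction (equicontinuity forces the pointwise and uniform topologies to agree on $E(X)$, where composition is jointly continuous) is the standard argument and is fine.
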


In this theorem, the condition that there is a dense set of almost periodic points is essentially needed (cf.~\cite[Example~3.25]{AD}).
\subsection{An algebraic lemma}%%%
We will further generalize Auslander's theorem in this paper.
For that, we will need an algebraic lemma. In preparation, we will first recall a notion---minimal left ideal in a general semigroup.

\begin{defn}[\cite{E69,Aus}]
Let $E$ be an algebraic semigroup with a product operation.
\begin{enumerate}
\item A \textit{left ideal} in $E$ is a nonempty subset $I$ such that $EI\subseteq I$. A \textit{minimal left ideal} in $E$ is one which does not properly contain a left ideal.

\item By $J(I)$ we will denote the set of all idempotents in a left ideal $I$; that is, $u\in J(I)$ iff $u^2=u$ and $u\in I$.
\end{enumerate}

In our dynamics situation $J(I)$ is always non-empty for $I\subseteq E(T,X)$ is a compact right-topological semigroup.
\end{defn}

\begin{lem}[{cf.~\cite[Lemmas~6.1, 6.2 and 6.3]{Aus}}]\label{lem2.14A}%%%
Let $E$ be any semigroup and let $I,I^\prime$ be minimal left ideals in $E$ with $J(I)\not=\emptyset$. Then:
\begin{enumerate}
\item[$(1)$] $Ip=I$ for all $p\in I$.
\item[$(2)$] $pu=p$ for all $u\in J(I), p\in I$.
\item[$(3)$] If $u\in J(I)$ and $p\in I$ with $up=u$, then $p\in J(I)$.
\item[$(4)$] If $u\in J(I)$ then $uI$ is a group with the neutral element $u$.
\item[$(5)$] If $p\in I$ then there is a unique $u\in J(I)$ with $up=p$.
\item[$(6)$] Let $u,v\in J(I)$ and let $p\in uI$. Then there is an $r\in I$ with $rp=v$ and $pr=u$.
\item[$(7)$] $I=\bigcup_{u\in J(I)}uI$.
\item[$(8)$] If $u,v\in J(I)$ with $u\not=v$, then $uI\cap vI=\emptyset$.
\item[$(9)$] Suppose $p\in E$ and $q,r\in I$ satisfy $qp=rp$. Then $q=r$.
\item[$(10)$] If $u\in J(I)$, then there is a unique $v\in J(I^\prime)$ such that $uv=v$ and $vu=u$.
\end{enumerate}
\end{lem}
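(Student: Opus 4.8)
The whole lemma is a bootstrapping argument: everything is forced by minimality of $I$ together with the algebra of the single idempotent guaranteed by $J(I)\neq\emptyset$, so the plan is to prove the parts in a carefully chosen order rather than the printed one, to avoid circularity. I would start with (1): for $p\in I$ the set $Ip$ is a left ideal contained in $I$ (since $E(Ip)=(EI)p\subseteq Ip$ and $Ip\subseteq Ep\subseteq I$), so minimality forces $Ip=I$. From (1) the idempotent identities are immediate: to get (2) write a given $p\in I$ as $p=qu$ using $Iu=I$ and compute $pu=qu^2=qu=p$; then (3) follows since $up=u$ yields $p^2=pup=pu=p$ after applying (2).

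Next I would establish the existence half of (5) by an explicit construction, since it is the engine for both the group structure and cancellation. Given $p\in I$, fix any $u_0\in J(I)$; by (1) there is $t\in I$ with $tp=u_0$, and then $e:=pt$ is idempotent ($e^2=p(tp)t=pu_0t=pt=e$, using (2)) and satisfies $ep=p(tp)=pu_0=p$. With this in hand I would prove (4): $u$ is a two-sided identity on $uI$ (left by $u^2=u$, right by (2)), $uI$ is closed, and every $p\in uI$ has a left inverse $uw\in uI$ where $wp=u$ comes from $Ip=I$; the standard fact that a semigroup with a left identity in which every element has a left inverse is a group then makes $uI$ a group.

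The part I expect to be the real hinge is the cancellation law (9), because the uniqueness statements (5), (8), (10) all rest on it. I would first prove right cancellation inside $I$: if $qs=rs$ with $q,r,s\in I$, take the idempotent $e_s$ with $e_ss=s$ from (5), invert $s$ inside the group $e_sI$ to get $s^{-1}$ with $ss^{-1}=e_s$, and conclude $qe_s=re_s$, i.e.\ $q=r$ by (2). The general case $p\in E$ then reduces to this by multiplying $qp=rp$ on the right by an idempotent $u$, since $pu\in EI\subseteq I$. Once (9) is available, uniqueness in (5) is immediate, (8) follows because a common point of $uI$ and $vI$ would be fixed on the left by both $u$ and $v$, (7) is just (5) together with $uI\subseteq I$, and (6) is solved explicitly by $r=v\bar p$, where $\bar p$ is the inverse of $p$ in the group $uI$ (one checks $rp=v\bar p p=vu=v$ and $pr=pv\bar p=p\bar p=u$ using (2)).

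The subtlest step is (10), since it must manufacture an idempotent of the \emph{other} minimal left ideal $I'$ out of $u\in J(I)$. My plan is to observe that $I'u$ is a left ideal contained in $I$ (because $cu\in Eu\subseteq I$), hence $I'u=I$ by minimality; choosing $c\in I'$ with $cu=u$, the element $v:=uc\in I'$ is idempotent ($v^2=u(cu)c=uuc=uc=v$) and satisfies $uv=v$ and $vu=u(cu)=u$. This simultaneously shows $J(I')\neq\emptyset$, so the entire machinery above applies verbatim to $I'$; uniqueness of $v$ then follows from $vu=u=v'u$ and cancellation (9) applied in $I'$. I expect the only genuine care to be in keeping the circular-looking dependencies straight --- in particular proving (9) before the uniqueness clauses, and deducing $J(I')\neq\emptyset$ from the construction in (10) rather than assuming it.
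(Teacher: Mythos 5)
Your proof is correct and complete; the paper itself gives no argument for this lemma, citing Auslander's Lemmas 6.1--6.3 instead, and your purely algebraic bootstrapping (minimality gives $Ip=I$, then the idempotent identities, then cancellation via inversion in the group $uI$, then the uniqueness clauses) is essentially the standard proof found in that reference. Your care with the order of the parts --- in particular establishing the existence half of (5) and the cancellation law (9) before any uniqueness claim, and deriving $J(I')\neq\emptyset$ inside the construction for (10) rather than assuming it --- is exactly what is needed, since the lemma as stated only hypothesizes $J(I)\neq\emptyset$.
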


\subsection{Generalizations of Auslander's theorem: abelian semigroup}%%%
Using an approach different with Auslander's, the following is our first main result of this paper, in which the condition ``with a dense set of almost periodic points'' is essential in view of \cite[Example~3.25]{AD}.

\begin{thm}\label{thm2.15A}%%%
Let $(T,X)$ be a semiflow with $T$ an abelian semigroup. If $(T,X)$ is weakly almost periodic with a dense set of almost periodic points, then it is uniformly almost periodic.
\end{thm}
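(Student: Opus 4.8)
The plan is to reduce everything to Theorem~\ref{thm2.12A}. Since an abelian semigroup is amenable and $(T,X)$ is already \emph{w.a.p.} with a dense set of almost periodic points, it suffices to prove that the Ellis semigroup $E(X)=E(T,X)$ is a topological semigroup; then Theorem~\ref{thm2.12A} yields that $(T,X)$ is \emph{u.a.p.} By Lemma~\ref{lem2.6}, the hypothesis \emph{w.a.p.} means $E(X)\subseteq C(X,X)$, so every element of $E(X)$ is continuous, a fact I shall use repeatedly. The first step is to show that $E(X)$ is \emph{commutative}. Given $p\in E(X)$ and $t\in T$, pick a net $t_\alpha\in T$ with $\pi_{t_\alpha}\to p$; from $\pi_{t_\alpha}\pi_t=\pi_{t_\alpha t}=\pi_{t t_\alpha}=\pi_t\pi_{t_\alpha}$ one passes to the limit, using right-continuity of $E(X)$ on the left-hand side and continuity of the map $r\mapsto\pi_t\circ r$ (valid since $\pi_t$ is continuous) on the right-hand side, to obtain $p\pi_t=\pi_t p$. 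Repeating this with a net $\pi_{s_\beta}\to q$, and now using continuity of the map $r\mapsto p\circ r$ (valid since $p$ is continuous by \emph{w.a.p.}) together with right-continuity, one gets $pq=qp$ for all $p,q\in E(X)$.

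Next I analyse the minimal left ideals in the commutative case. A compact right-topological semigroup possesses a minimal left ideal, and when $E(X)$ is commutative every left ideal is two-sided; hence if $I,I'$ are minimal left ideals then $\emptyset\neq II'\subseteq I\cap I'$ forces $I=I'$, so there is a \emph{unique} minimal left ideal $M$. Moreover, for any $p\in M$ the set $E(X)p$ is a compact left ideal contained in $M$, whence $M=E(X)p$ by minimality, so $M$ is compact. By Lemma~\ref{lem2.14A} the idempotent set $J(M)$ is non-empty; if $u,v\in J(M)$ then $uv$ lies in both $uM$ and $vM$ (using $vu=uv$), so part~$(8)$ of Lemma~\ref{lem2.14A} gives $u=v$. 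Thus $M$ contains a \emph{unique} idempotent $u$, and by parts~$(4)$ and~$(7)$ of Lemma~\ref{lem2.14A} we have $M=uM$, a group with neutral element $u$.

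The crucial step is to prove $u=\textit{id}_X$, and this is where the density of almost periodic points enters; I expect it to be the main obstacle, since it is the point at which the topological hypothesis must be fused with the algebraic structure of $M$. Let $x\in X$ be almost periodic, so that $\textrm{cls}_XTx=E(X)x$ is minimal. Since $M$ is a compact left ideal, $Mx$ is a non-empty closed $T$-invariant subset of $E(X)x$, whence $Mx=E(X)x\ni x$ by minimality; thus $x=px$ for some $p\in M$, and by part~$(5)$ of Lemma~\ref{lem2.14A} the unique idempotent satisfies $up=p$, so $ux=upx=px=x$. Therefore $u$ and $\textit{id}_X$ agree on the dense set of almost periodic points; as both are continuous, $u=\textit{id}_X$. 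Consequently $\textit{id}_X\in M$, so $E(X)=E(X)\,\textit{id}_X\subseteq M$, i.e. $M=E(X)$ is a group with neutral element $\textit{id}_X$. In particular each $p\in E(X)$ is a continuous bijection whose inverse again lies in $E(X)\subseteq C(X,X)$, so $E(X)$ is a group of self-homeomorphisms of $X$, and $(T,X)$ is distal by Lemma~\ref{lem2.3}.

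Finally I upgrade separate continuity to joint continuity. Because $E(X)$ is commutative, for each $q$ the left translation $L_q\colon p\mapsto qp$ coincides with the right translation $R_q\colon p\mapsto pq$, which is continuous; hence multiplication on $E(X)$ is separately continuous. As $E(X)$ is a group which is a compact (so locally compact) Hausdorff space, Ellis' joint continuity theorem \cite{E69} shows that $E(X)$ is a topological group; in particular $(f,g)\mapsto f\circ g$ is jointly continuous, i.e. $E(X)$ is a topological semigroup. Since $T$ is amenable and $(T,X)$ is \emph{w.a.p.} with a dense set of almost periodic points, Theorem~\ref{thm2.12A} now gives that $(T,X)$ is uniformly almost periodic, completing the proof.
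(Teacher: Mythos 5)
Your proof is correct, and its core coincides with the paper's: both arguments isolate a minimal left ideal of $E(X)$, use commutativity together with the continuity of the elements of $E(X)$ (i.e.\ Lemma~\ref{lem2.6}) to show that this ideal carries a \emph{unique} idempotent $u$, use the density of almost periodic points plus continuity of $u$ to force $u=\textit{id}_X$, and conclude that $E(X)$ is a group of continuous self-maps of $X$. Within that core you are somewhat more systematic than the paper: you first prove that all of $E(X)$ is commutative and that the minimal left ideal is unique (the paper only extracts the two identities $uv=\lim t_nv=\lim vt_n=vu$ it actually needs and then cancels via Lemma~\ref{lem2.14A}-(9)), and you supply the argument, which the paper merely asserts, that $ux=x$ at every almost periodic point $x$ (via $Mx=E(X)x$ and Lemma~\ref{lem2.14A}-(5)). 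Where you genuinely diverge is the endgame: the paper passes directly from ``$E(X)$ is a group of continuous maps'' to equicontinuity via Ellis' Theorem~\ref{thm2.4}, and then to \emph{u.a.p.} via Theorem~\ref{thm1.6}; you instead invoke Ellis' joint-continuity theorem for compact Hausdorff groups with separately continuous multiplication to upgrade $E(X)$ to a topological group, and then feed the ``topological semigroup'' hypothesis into Theorem~\ref{thm2.12A}. Both routes are valid and rest on a deep theorem of Ellis; yours is heavier machinery for this particular step (and note that once you know $E(X)$ is a group of continuous self-maps you could have finished exactly as the paper does, without the joint-continuity theorem at all), but it has the merit of yielding the stronger intermediate conclusion that $E(T,X)$ is a compact topological group, which Theorem~\ref{thm2.12A} otherwise has to assume.
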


\begin{proof}
Let $I$ be a minimal left ideal in the Ellis semigroup $E(X)$ of $(T,X)$. Since $(T,X)$ is \textit{w.a.p.}, by Lemma~\ref{lem2.6} we see $E(X)\subset C(X,X)$. First, we claim the following:
\begin{itemize}
\item[(i)] $I$ contains a unique idempotent, say $u$.

Indeed, if $u,v$ are idempotents in $I$, then by Lemma~\ref{lem2.14A}-(2) it follows that $uv=u$ so $uv=uu$. Let $\{t_n\}$ be a net in $T$ with $t_n\to u$ in $E(X)$; then
$$
uv=\lim t_nv=\lim t_nu=uu.
$$
But $t_nv=vt_n$ and $t_nu=ut_n$ for $T$ is abelian, thus
$$
uv=\lim vt_n=\lim ut_n=uu.
$$
Because $u,v\in C(X,X)$, $vu=\lim vt_n$ and $uu=\lim ut_n$. Therefore, $vu=uu$. Whence Lemma~\ref{lem2.14A}-(9) follows $v=u$. This proves our claim (i).
\end{itemize}

Next, for any almost periodic point $x$, $ux=x$ so that $ux=x$ for each $x\in X$, since $u\in C(X,X)$ and the almost periodic points are dense in $X$. Then $I=E(X)$ is a group of continuous maps of $X$ with the neutral element $\textit{id}_X$ by Lemma~\ref{lem2.14A}-(4).

Finally, by Ellis' theorem (i.e. Theorem~\ref{thm2.4} above), $E(X)$ and so $T$ both acts equicontinuously on $X$, and each $t\in T$ is a self-homeomorphism of $X$. Thus $(T,X)$ is an \textit{u.a.p.} semiflow by Theorem~\ref{thm1.6}.
\end{proof}

It should be noticed that \cite[Example~2.2]{AD} is a minimal, equicontinuous (and so \textit{w.a.p.}), but not uniformly almost periodic, semiflow on a discrete phase space with a non-abelian phase semigroup. This shows that the abelian condition in Theorem~\ref{thm2.15A} is also important.

\begin{cor}
Let $(T,X)$ be a minimal semiflow with $T$ an abelian semigroup. Then $(T,X)$ is weakly almost periodic if and only if it is uniformly almost periodic.
\end{cor}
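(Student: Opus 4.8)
The plan is to derive the corollary directly from Theorem~\ref{thm2.15A}, reducing it to the observation that minimality supplies the density hypothesis on almost periodic points for free.

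The implication ``uniformly almost periodic $\Rightarrow$ weakly almost periodic'' requires no new argument: it is the general chain \textit{u.a.p.} $\Rightarrow$ equicontinuous $\Rightarrow$ \textit{w.a.p.} recorded after Theorem~\ref{thm1.6}, which holds for every semiflow on a compact T$_2$-space regardless of any algebraic hypothesis on $T$. So this direction can simply be cited.

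For the converse I would invoke the characterization of almost periodic points recalled in $\S2.3$: a point $x$ is almost periodic for $(T,X)$ \textit{iff} $\mathrm{cls}_XTx$ is a minimal subset of $(T,X)$. Since $(T,X)$ is assumed minimal, we have $\mathrm{cls}_XTx=X$ for every $x\in X$, and $X$ is itself minimal by hypothesis; hence every point of $X$ is almost periodic. In particular the set of almost periodic points equals $X$, so it is trivially dense. This is the only place where minimality is used.

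With this in hand, all the hypotheses of Theorem~\ref{thm2.15A} are met: $T$ is abelian, $(T,X)$ is weakly almost periodic, and it has a dense set of almost periodic points. That theorem then yields at once that $(T,X)$ is uniformly almost periodic, completing the equivalence. I do not expect any genuine obstacle here, since the substantive work has already been carried out in Theorem~\ref{thm2.15A}; the corollary merely records that for minimal systems the density assumption becomes automatic.
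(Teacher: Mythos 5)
Your proof is correct and is exactly the argument the paper intends (the corollary is stated without proof precisely because minimality makes every point almost periodic, so Theorem~\ref{thm2.15A} applies immediately, while the reverse implication is the standard chain \textit{u.a.p.} $\Rightarrow$ equicontinuous $\Rightarrow$ \textit{w.a.p.}). No gaps.
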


\subsection{Generalizations of Auslander's theorem: group actions}%%%
Our second main result of this paper is the following generalization of Auslander's Theorem~\ref{thm2.10A} in the non-minimal group-action case.

\begin{thm}\label{thm2.16A}%%%
Let $(T,X)$ be a flow with phase group $T$ and with a dense set of almost periodic points. If $(T,X)$ is weakly almost periodic, then it is equicontinuous and so uniformly almost periodic.
\end{thm}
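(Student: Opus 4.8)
The plan is to reduce the statement, via Ellis' Theorem~\ref{thm2.4}, to a single algebraic assertion about the Ellis semigroup: that $\textit{id}_X$ is the \emph{only} idempotent of $E(X)$. Indeed, once $E(X)$ has $\textit{id}_X$ as its unique idempotent, any minimal left ideal $I$ of $E(X)$ contains an idempotent (Lemma~\ref{lem2.14A}), which must then be $\textit{id}_X$; hence $\textit{id}_X\in I$ forces $E(X)=E(X)\,\textit{id}_X\subseteq I$, so $I=E(X)$, and Lemma~\ref{lem2.14A}-(4) shows $E(X)=\textit{id}_X E(X)$ is a group with neutral element $\textit{id}_X$. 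Since $(T,X)$ is \textit{w.a.p.}, Lemma~\ref{lem2.6} gives $E(X)\subset C(X,X)$, so every $p\in E(X)$ together with its group-inverse $p^{-1}\in E(X)$ is continuous; thus $E(X)$ is a group of self-homeomorphisms of $X$, and condition~$(3)$ of Theorem~\ref{thm2.4} yields that $(T,X)$ is equicontinuous and therefore \textit{u.a.p.} So the entire proof rests on producing this unique idempotent.

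To control the idempotents I would argue minimal set by minimal set. Fix a minimal set $M\subseteq X$. The subflow $(T,M)$ is again \textit{w.a.p.}, and being minimal, Auslander's Theorem~\ref{thm2.10A} makes it equicontinuous. Because $T$ is a \emph{group}, each $\pi_t$ is a homeomorphism of $X$, and the invariances $\pi_t(M)\subseteq M$ and $\pi_{t^{-1}}(M)\subseteq M$ force $M\subseteq\pi_t(\pi_{t^{-1}}(M))\subseteq\pi_t(M)$, i.e. $\pi_t(M)=M$; hence each $t$ restricts to a self-surjection of $M$, and Theorem~\ref{thm2.5} shows that $(T,M)$ is \textit{distal}. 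By Lemma~\ref{lem2.3}, $E(T,M)$ is then a group with neutral element $\textit{id}_M$, so its only idempotent is $\textit{id}_M$.

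The decisive step — and the main obstacle — is transporting this local information back to $E(X)$. The restriction map $r\colon E(X)\rightarrow M^M,\ p\mapsto p|_M$, is a well-defined continuous semigroup homomorphism (each $p\in E(X)$ satisfies $p(M)\subseteq M$, since $M$ is closed and invariant), and $r(E(X))=E(T,M)$ because $r$ is continuous on the compact space $E(X)$ and carries $\{\pi_t\}$ onto $\{\pi_t|_M\}$. Consequently, for any idempotent $u\in E(X)$, the element $u|_M=r(u)$ is an idempotent of the group $E(T,M)$, whence $u|_M=\textit{id}_M$; that is, $u$ fixes $M$ pointwise. Passing from these minimal pieces to all of $X$ is then achieved by combining two facts: every almost periodic point lies in some minimal set, and such points are dense by hypothesis, while $u$ is \emph{continuous} since $E(X)\subset C(X,X)$. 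Thus $u$ agrees with $\textit{id}_X$ on a dense subset and hence $u=\textit{id}_X$, establishing the uniqueness of the idempotent and closing the argument. The point demanding care is that the group hypothesis is used essentially and repeatedly — for the surjectivity of the $\pi_t|_M$ feeding Theorem~\ref{thm2.5}, and in the group-theoretic inputs of Auslander's and Ellis' theorems — and it is precisely this invertibility of the $\pi_t$, unavailable in the abelian-semigroup setting of Theorem~\ref{thm2.15A}, that here replaces the commutativity computation used there.
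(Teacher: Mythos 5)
Your argument is correct, but it establishes the crucial fact---that the relevant idempotent of $E(X)$ is $\textit{id}_X$---by a genuinely different route from the paper's. The paper fixes a minimal left ideal $I$ in $E(X)$, proves (following Ellis--Nerurkar) that the left-translation action $\Pi\colon T\times I\rightarrow I$ is itself a weakly almost periodic flow, applies Auslander's Theorem~\ref{thm2.10A} to the minimal flow $(T,I,\Pi)$ to get equicontinuity there, and then eliminates a second idempotent $v\in I$ by noting that $u$ and $v$ are proximal in $(T,I,\Pi)$ (since $t_nu\to uu=u$ and $t_nv\to uv=u$ by Lemma~\ref{lem2.14A}-(2)) while $(u,v)$ is almost periodic; it then finishes exactly as you do, via density of almost periodic points, continuity of $u$, and Ellis' Theorem~\ref{thm2.4}. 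You instead apply Auslander's theorem to each minimal subset $M\subseteq X$, upgrade equicontinuity of $(T,M)$ to distality using the group hypothesis and Theorem~\ref{thm2.5}, identify $E(T,M)$ as a group with identity $\textit{id}_M$ via Lemma~\ref{lem2.3}, and pull this back through the restriction homomorphism $E(X)\rightarrow E(T,M)$ (which is indeed well defined, continuous, and onto, as you verify). Your version avoids the Ellis--Nerurkar step and the proximality computation, at the price of invoking Theorem~\ref{thm2.5} and the restriction-map bookkeeping, and it yields the marginally stronger intermediate statement that $\textit{id}_X$ is the \emph{only} idempotent in all of $E(X)$, not merely in one minimal left ideal. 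One cosmetic point: the existence of an idempotent in a minimal left ideal is not literally part of Lemma~\ref{lem2.14A} (whose hypothesis is $J(I)\neq\emptyset$) but is the Ellis--Namakura fact the paper records just before that lemma; with that citation adjusted, the proof is complete.
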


\begin{proof}
Let $I$ be a minimal left ideal in Ellis' semigroup $E(X)$ of $(T,X)$, where $E(X)\subset C(X,X)$ by Lemma~\ref{lem2.6}. We will first show the following claim.
\begin{itemize}
\item[(ii)] Let $(T,Y)$ be a weakly almost periodic semiflow with $\mathbb{I}$ a minimal left ideal in $E(Y)$. Then
\begin{gather*}
\Pi\colon T\times \mathbb{I}\rightarrow \mathbb{I},\quad (t,p)\mapsto tp=t\circ p,
\end{gather*}
is also a weakly almost periodic semiflow.

\begin{proof}
In fact, this is essentially Ellis-Nerurkar~\cite[Proposition~II.4]{EN}. We will show that $E(T,\mathbb{I})\subset C(\mathbb{I},\mathbb{I})$. For this, let $\phi\in E(T,\mathbb{I})$ with $t_n\to\phi$ where $\{t_n\}$ is a net in $T$. This means that $t_np\to\phi p$ for all $p\in \mathbb{I}$. Without loss of generality, let $t_n\to q\in E(T,Y)$. Then by $p\in C(Y,Y)$, we have $qp=\phi p$. This shows that $\phi\colon \mathbb{I}\rightarrow \mathbb{I}$ is the left-translation function $L_q\colon p\mapsto qp$ of $\mathbb{I}$ to itself. Since $q\in C(Y,Y)$, then $p_n\to p$ in $\mathbb{I}$ implies that $qp_n\to qp$ in $\mathbb{I}$ under the pointwise topology. Thus $\phi\in C(\mathbb{I},\mathbb{I})$.
\end{proof}
\end{itemize}
Now we apply (ii) to the case that $(T,Y)=(T,X)$ and $\mathbb{I}=I$ and then we can obtain the following claim.
\begin{itemize}
\item[(iii)] $\Pi\colon T\times I\rightarrow I$ is an equicontinuous flow. \quad (It does not need the density condition here.)

In fact, this follows at once from (ii) and Theorem~\ref{thm2.10A}.
\end{itemize}
Next, we can claim that
\begin{itemize}
\item[(iv)] There exists a unique idempotent $u$ in $I$.

If $u,v$ are idempotents in $I$, then by Lemma~\ref{lem2.14A}-(2) it follows that $u$ is proximal to $v$ for $(T,I,\Pi)$. But by (iii), $(u,v)$ is an almost periodic point for $(T,I\times I,\Pi)$. Thus $u=v$.
\end{itemize}
Therefore, by $u\in C(X,X)$, we can conclude that $ux=x$ for each $x\in X$. This shows that $I=E(X)$ is a group of continuous maps of $X$ into $X$ containing $\textit{id}_X$. Therefore, $(T,X)$ is an equicontinuous and so uniformly almost periodic flow by Theorem~\ref{thm2.4}.
\end{proof}

Notice that in Theorem~\ref{thm2.16A}, if assume additionally $(T,X)$ is \textit{topologically transitive} (i.e. for any non-empty open sets $U,V$ in $X$, $U\cap t^{-1}V\not=\emptyset$ for some $t\in T$; cf.~Definition~\ref{def2.22A}), then we will show that $(T,X)$ is minimal (see Corollary~\ref{c1} to Lemma~\ref{lem2.25}).

\subsection{Two genericity theorems}%%%
Next we will consider a semiflow $(T,X)$ with each $t\in T$ a bijection of $X$. For this end we need some preliminaries.

Let $Y$ and $Z$ be topological spaces; then $C_{\mathfrak{p}}(Y,Z)$ will denote the set of all continuous maps from $Y$ to $Z$ provided with the topology of pointwise convergence; that is, a net $f_n\to g$ if and only if $f_n(y)\to g(y)$ for all $y\in Y$. If $(Z,\mathscr{U}_Z)$ is a uniform space where $\mathscr{U}_Z$ is a symmetric uniform structure, then $\mathscr{U}_{(Y,Z)}$ denotes the uniformity on $C(Y,Z)$ defined by the base $\{\tilde{\varepsilon}\,|\,\varepsilon\in\mathscr{U}_Z\}$, where $(f,g)\in\tilde{\varepsilon}$ if and only if $(f(y),g(y))\in\varepsilon$ for all $y\in Y$. We write $C_{\mathfrak{u}}(Y,Z)$ for this uniform space. So if $E\subset C(Y,Z)$ is compact in $C_{\mathfrak{u}}(Y,Z)$, then it is also compact in $C_{\mathfrak{p}}(Y,Z)$.

The following classical result, due to J.P.~Troallic, is our important tool.

\begin{lem}[{cf.~\cite[Lemma~4.2]{Aus}}]\label{lem2.17A}%%%
Let $W$ be a locally compact T$_2$-space, $Y$ a compact T$_2$-space, and $Z$ a uniform space. Let $\varphi\colon W\rightarrow C_{\mathfrak{p}}(Y,Z)$ be a continuous function, and let $V\in\mathscr{U}_{(Y,Z)}$. Then there is a dense open subset $U$ of $W$ such that if $w\in U$, then $\varphi^{-1}[V[\varphi(w)]]$ is a neighborhood of $w$ in $W$.
\end{lem}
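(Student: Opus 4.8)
The plan is to reduce the statement to a density assertion about an explicitly defined open set, and then to derive that density from a Namioka--Troallic type genericity argument. First I would record the translation between $V$ and $\mathscr{U}_Z$: let $\varepsilon\in\mathscr{U}_Z$ be the (symmetric) entourage with $(f,g)\in V\Leftrightarrow(f(y),g(y))\in\varepsilon$ for all $y\in Y$, fix a symmetric $\eta\in\mathscr{U}_Z$ with $\eta\subseteq\varepsilon$, and let $\tilde\eta\in\mathscr{U}_{(Y,Z)}$ be the associated entourage, so that $\tilde\eta\subseteq V$. I would then set
\[
U=\bigcup\{\,O\subseteq W\text{ open}:\ (\varphi(w_1),\varphi(w_2))\in\tilde\eta\ \text{for all }w_1,w_2\in O\,\}.
\]
This $U$ is open (each point of a witnessing $O$ lies in $U$ via the same $O$), and if $w\in U$ with witness $O$, then for every $w'\in O$ one has $(\varphi(w),\varphi(w'))\in\tilde\eta\subseteq V$, i.e.\ $\varphi(w')\in V[\varphi(w)]$, whence $O\subseteq\varphi^{-1}[V[\varphi(w)]]$ and $\varphi^{-1}[V[\varphi(w)]]$ is a neighbourhood of $w$. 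Thus the whole lemma reduces to proving that $U$ is \emph{dense} in $W$.

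For the density I would first localise. Since $W$ is locally compact Hausdorff it is a Baire space, its open subspaces are again locally compact Hausdorff (hence Baire), and the relatively compact open sets form a base. So it suffices to show that every nonempty relatively compact open $W_0\subseteq W$ contains a nonempty open $O$ on which $\varphi$ has oscillation within $\tilde\eta$, i.e.\ $\varphi(O)\times\varphi(O)\subseteq\tilde\eta$. At this point it is convenient to pass to the evaluation map $\Phi\colon W\times Y\to Z$, $\Phi(w,y)=\varphi(w)(y)$. Continuity of $\varphi$ into $C_{\mathfrak{p}}(Y,Z)$ says exactly that each $\Phi(\cdot,y)$ is continuous on $W$, while $\varphi(w)\in C(Y,Z)$ says each $\Phi(w,\cdot)$ is continuous on $Y$; hence $\Phi$ is separately continuous. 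The desired conclusion then reads: there is a nonempty open $O\subseteq W_0$ with $(\Phi(w_1,y),\Phi(w_2,y))\in\eta$ for all $w_1,w_2\in O$ and \emph{all} $y\in Y$ simultaneously.

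The core of the proof, and the step I expect to be the main obstacle, is precisely this passage from ``separate continuity of $\Phi$'' to ``uniform-over-$Y$ small oscillation on a nonempty open set.'' I would argue by contradiction with a Baire-category/nested-neighbourhood construction. Fix a symmetric $\eta_1$ with $\eta_1\circ\eta_1\subseteq\eta$. Assuming no nonempty open subset of some $W_0$ has oscillation within $\tilde\eta$, I would build nested nonempty open sets $W_0=G_0\supseteq G_1\supseteq\cdots$ with $\overline{G_{n+1}}\subseteq G_n$ and $\overline{G_n}$ compact, together with witnesses $y_n\in Y$ and points $p_n$, so that at the direction $y_n$ a definite $\eta_1$-gap of $\Phi(\cdot,y_n)$ survives on $\overline{G_{n+1}}$ while the oscillations of the finitely many previously chosen $\Phi(\cdot,y_k)$, $k\le n$, are forced below $\eta_1$ there (using continuity of each $\Phi(\cdot,y_k)$). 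Trapping a point $w_\ast\in\bigcap_n\overline{G_n}$ and a cluster direction $y_\ast$ of $\{y_n\}$ in the compact space $Y$, the continuity of $\Phi(w_\ast,\cdot)=\varphi(w_\ast)$ at $y_\ast$ should contradict the persistent gaps. The delicate point is that the gap points and $w_\ast$ need not coincide and $\Phi$ is only separately continuous, so closing the contradiction requires the Baire property of $W$ and the compactness of $Y$ in an essential way; this is exactly the content of Troallic's lemma and is where all the work lies. Once density of $U$ is established, the first paragraph's reduction finishes the proof, with $U$ serving as the required dense open set.
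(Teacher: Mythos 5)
The paper does not actually prove this lemma: it is quoted verbatim from Auslander's book (Lemma~4.2 there, due to Troallic), so there is no in-paper argument to compare yours against. Judged on its own terms, your first paragraph is a correct and clean reduction: defining $U$ as the union of all open sets on which $\varphi$ has oscillation inside $\tilde\eta\subseteq V$ does produce an open set with the stated neighbourhood property, and the lemma is thereby equivalent to the density of $U$, i.e.\ to the assertion that every nonempty open $W_0\subseteq W$ contains a nonempty open set of $\tilde\eta$-small oscillation.

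The problem is that this density assertion \emph{is} the lemma --- all of the mathematical content lives there --- and your proposal does not prove it. You describe a nested-open-sets construction and then write that continuity of $\varphi(w_\ast)$ at a cluster direction $y_\ast$ ``should contradict the persistent gaps,'' explicitly conceding that ``this is exactly the content of Troallic's lemma and is where all the work lies.'' As sketched, the contradiction does not close: the gap witnesses $p_n,q_n$ chosen at stage $n$ lie in $G_n$ but need not lie in any later $G_m$, so the trapped point $w_\ast\in\bigcap_n\overline{G_n}$ has no controlled relation to the values $\Phi(p_n,y_n)$ and $\Phi(q_n,y_n)$; forcing the oscillation of the finitely many earlier $\Phi(\cdot,y_k)$ to be small on $G_{n+1}$ says nothing about $\Phi(\cdot,y_n)$ at the points where its gap was witnessed. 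Moreover, since $Y$ is only compact Hausdorff (not metrizable), $y_\ast$ is merely a cluster point of the net $\{y_n\}$, and continuity of $\varphi(w_\ast)$ at $y_\ast$ controls $\Phi(w_\ast,y_n)$ only along a subnet, which again does not reach the gap values. Troallic's actual proof circumvents exactly these difficulties by a different, more combinatorial device (working with finite subsets of $Y$ and a careful choice of entourages), and none of that is reproduced here. So the proposal is an honest reduction plus an appeal to the very result being proved; the core step is a genuine gap.
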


In Lemma~\ref{lem2.17A} the condition that $\varphi\colon W\rightarrow C_{\mathfrak{p}}(Y,Z)$ be continuous is equivalent to say that $\Phi\colon W\times Y\rightarrow Z,\ (w,y)\mapsto\varphi(w)(y)$ is separately continuous.

\begin{defn}[\cite{GH,E69,Aus,AD}]
Let $(T,X)$ be a semiflow.
\begin{enumerate}
\item We say $(T,X)$ is \textit{equicontinuous at a point} $x\in X$, write $x\in \textsl{Equi}\,(T,X)$, provided that for any $\varepsilon\in\mathscr{U}_X$, there is some $\delta\in\mathscr{U}_X$ such that $t(\delta[x])\subseteq\varepsilon[tx]$ for each $t\in T$.

\item[{}] Then $(T,X)$ is equicontinuous (cf.~Definition~\ref{def1.4}) if and only if $\textsl{Equi}\,(T,X)=X$.

\item Given any $\varepsilon\in\mathscr{U}_X$, we say $x\in\textsl{Equi}_\varepsilon(T,X)$ if and only if there is some $\delta\in\mathscr{U}_X$ such that $t(\delta[x])\subseteq\varepsilon[tx]$ for each $t\in T$.
\end{enumerate}

Clearly $\textsl{Equi}\,(T,X)=\bigcap_{\varepsilon\in\mathscr{U}_X}\textsl{Equi}_\varepsilon(T,X)$ and $\textsl{Equi}_\varepsilon(T,X)$ is an open subset of $X$. $\textsl{Equi}_\varepsilon(T,X)$ and $\textsl{Equi}\,(T,X)$ are invariant if $(T,X)$ is a flow. However, they are in general not invariant if $T$ is only a semigroup.
\end{defn}

Motivated by Auslander's proof of Theorem~\ref{thm2.10A} presented in \cite[p.~61]{Aus}, using Lemma~\ref{lem2.17A} we can obtain our first generic result which will be the starting point for some of our later arguments.

\begin{thm}\label{thm2.19A}%%%
Let $(T,X)$ be a weakly almost periodic semiflow with phase semigroup $T$. Then for any $\varepsilon\in\mathscr{U}_X$, $\textsl{Equi}_\varepsilon(T,X)$ is a dense open set in $X$. In particular, if $X$ is a compact metric space, then $\textsl{Equi}\,(T,X)$ is a residual set in $X$.
\end{thm}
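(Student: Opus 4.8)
The plan is to apply Troallic's Lemma~\ref{lem2.17A} with the right choice of $W$, $Y$, $Z$, and $\varphi$, and then translate the resulting dense-open conclusion into the statement that $\textsl{Equi}_\varepsilon(T,X)$ is dense and open. Since $(T,X)$ is \textit{w.a.p.}, Lemma~\ref{lem2.6} gives $E(X)\subset C(X,X)$, and by definition $E(X)$ is compact in $X^X$; hence it is compact in $C_{\mathfrak{p}}(X,X)$. The natural choice is to take $W=E(X)$ (a compact, hence locally compact, T$_2$-space), $Y=X$, $Z=X$ with its uniformity $\mathscr{U}_X$, and to let $\varphi\colon E(X)\to C_{\mathfrak{p}}(X,X)$ be the inclusion map $p\mapsto p$. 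Equivalently, in the notation of the lemma, $\Phi\colon E(X)\times X\to X$, $(p,x)\mapsto p(x)$, which is separately continuous: for fixed $x$ the map $p\mapsto p(x)$ is continuous by definition of $\mathfrak{p}$, and for fixed $p$ the map $x\mapsto p(x)$ is continuous because $p\in C(X,X)$. So $\varphi$ is continuous as required.

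Next, I would fix $\varepsilon\in\mathscr{U}_X$ and choose a symmetric $\varepsilon'\in\mathscr{U}_X$ with $\varepsilon'\circ\varepsilon'\subseteq\varepsilon$, and apply the lemma with $V=\tilde{\varepsilon'}\in\mathscr{U}_{(X,X)}$. The lemma produces a dense open subset $U$ of $E(X)$ such that for every $w\in U$, the set $\varphi^{-1}[\tilde{\varepsilon'}[\varphi(w)]]=\{p\in E(X)\mid (p(x),w(x))\in\varepsilon'\ \forall x\in X\}$ is a neighborhood of $w$ in $E(X)$. The key point is that the identity map $\textit{id}_X=\pi_e$ lies in $\{\pi_t\mid t\in T\}\subseteq E(X)$, and since $U$ is dense and $\{\pi_t\mid t\in T\}$ is dense in $E(X)$, I can find a net — indeed a point $w\in U$ — that is $\tilde{\varepsilon'}$-close to $\textit{id}_X$ uniformly, i.e.\ with $(w(x),x)\in\varepsilon'$ for all $x$; more carefully, since $U$ is open and dense, its closure contains $E(X)$, so there is $w\in U$ arbitrarily $\mathfrak{p}$-close to $\textit{id}_X$, and by the neighborhood property I then pull this back to conclude the equicontinuity condition near $\textit{id}_X$.

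The subtle part — and the main obstacle — is converting the neighborhood statement in $E(X)$ (pointwise topology) into the uniform-in-$t$ estimate $t(\delta[x])\subseteq\varepsilon[tx]$ for all $t\in T$ that defines $\textsl{Equi}_\varepsilon(T,X)$. The mechanism is that the neighborhood of $\textit{id}_X$ (or of a suitable $w\in U$ close to it) in $E(X)$ can be taken of the form $\{p\mid (p(x_i),w(x_i))\in\varepsilon'\}$ for finitely many $x_i$, and because every $\pi_t$ lies in $E(X)$, applying the relation $\pi_t p=\pi_t\circ p$ together with continuity of each $\pi_t$ and the right-topological structure lets one transfer closeness at the finitely many base points to the genuine equicontinuity neighborhood $\delta$ via the triangle inequality $\varepsilon'\circ\varepsilon'\subseteq\varepsilon$. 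Once $\textsl{Equi}_\varepsilon(T,X)$ is shown dense and open for each $\varepsilon$, the final assertion for compact metric $X$ is immediate: $\mathscr{U}_X$ then has a countable base, so $\textsl{Equi}\,(T,X)=\bigcap_{\varepsilon}\textsl{Equi}_\varepsilon(T,X)$ is a countable intersection of dense open sets, hence residual by the Baire category theorem. I would keep a careful eye on where the density of $U$ in $E(X)$ is used versus the neighborhood-of-$w$ property, since the heart of the argument is selecting the correct $w$ near the identity and then exploiting that $T$-translates act continuously.
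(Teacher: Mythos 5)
There is a genuine gap: you have applied Troallic's Lemma~\ref{lem2.17A} with the roles of $W$ and $Y$ reversed. With your choice $W=E(X)$, $Y=X$ and $\varphi$ the inclusion $E(X)\hookrightarrow C_{\mathfrak p}(X,X)$, the lemma produces a dense open subset $U$ of $E(X)$, and the neighborhood property at $w\in U$ says: pointwise closeness to $w$ at finitely many points of $X$ forces uniform $\varepsilon'$-closeness to $w$ on all of $X$. That is a statement about points of the Ellis semigroup (continuity of the inclusion $E(X)\to C_{\mathfrak u}(X,X)$ at $w$), not about points of $X$; it does not yield, for a point $x\in X$, a $\delta$ with $t(\delta[x])\subseteq\varepsilon[tx]$ for all $t\in T$. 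Even in the best case where $\mathit{id}_X$ could be taken in $U$, what you would get is that a $\mathfrak p$-neighborhood of $\mathit{id}_X$ in $E(X)$ consists of maps uniformly $\varepsilon'$-close to the identity --- an almost-periodicity-type return statement --- and your proposed bridge via finitely many base points and $\varepsilon'\circ\varepsilon'\subseteq\varepsilon$ does not convert this into density of $\textsl{Equi}_\varepsilon(T,X)$ in $X$. You correctly flag this as ``the main obstacle,'' but the sketch does not overcome it.

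The paper's proof uses the transposed setup: $W=X$, $Y=E(X)$, $Z=X$, with $\varphi\colon X\to C_{\mathfrak p}(E(X),X)$ the evaluation map $\varphi(x)(p)=p(x)$. Continuity of $\varphi$ is exactly where the w.a.p.\ hypothesis ($E(X)\subset C(X,X)$, Lemma~\ref{lem2.6}) is used. Troallic's lemma then gives a dense open $U\subseteq X$ such that for $x\in U$ the set $N_x=\varphi^{-1}[\tilde\varepsilon[\varphi(x)]]$ is a neighborhood of $x$; choosing $\delta$ with $\delta[x]\subseteq N_x$ gives $(p(x),p(x'))\in\varepsilon$ for all $p\in E(X)$ and all $x'\in\delta[x]$, in particular for all $p=\pi_t$, so $U\subseteq\textsl{Equi}_\varepsilon(T,X)$ directly. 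No approximation near $\mathit{id}_X$ and no triangle inequality are needed. Your final Baire-category step for metric $X$ is fine, but the core application of the lemma must be redone with $W=X$.
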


\begin{proof}
Let $\varphi\colon X\rightarrow C_{\mathfrak{p}}(E(X),X)$ be defined by $\varphi(x)(p)=p(x)$ for $x\in X$ and $p\in E(X)$, where $E(X)=E(T,X)$. Since $E(X)\subset C(X,X)$ by the \textit{w.a.p.} hypothesis, $\varphi$ is continuous.
Now, let $\varepsilon\in\mathscr{U}_X$ and let $V=\tilde{\varepsilon}\in\mathscr{U}_{(E(X),X)}$ so
\begin{gather*}
V=\big{\{}(f,g)\in C(E(X),X)\times C(E(X),X)\,|\,(f(p),g(p))\in\varepsilon\ \forall p\in E(X)\big{\}}.
\end{gather*}
By Lemma~\ref{lem2.17A}, there is a dense open set $U$ in $X$ such that if $x\in U$, then $N_x=\varphi^{-1}[V[\varphi(x)]]$ is a neighborhood of $x$. So, if $x^\prime\in N_x$, $(\varphi(x),\varphi(x^\prime))\in V$, or that is the same thing, $(p(x),p(x^\prime)\in\varepsilon$ for all $p\in E(X)$. Next, we take some $\delta\in\mathscr{U}_X$ with $\delta[x]\subseteq N_x$. Then for any $t\in T$, $t(\delta[x])\subseteq\varepsilon[tx]$. Thus $U\subseteq\textsl{Equi}_\varepsilon(T,X)$. This proves Theorem~\ref{thm2.19A}.
\end{proof}

Our another generic result is the following, which will be needed for our later Theorem~\ref{thm2.34A}.

\begin{thm}\label{thm2.20A}%%5
Let $\varphi,\varphi_n, n=1,2,\dotsc$ be continuous self-maps of $X$ with $\varphi_n\to\varphi$ pointwise on $X$. Then for any $\varepsilon\in\mathscr{U}_X$, there exists a dense open $E_\varepsilon$ of $X$ such that for any $x_0\in E_\varepsilon$, one can find a neighborhood $V$ of $x_0$ and $n_0>0$ so that $(\varphi_n(x),\varphi_n(y))\in\varepsilon$ for all $x,y\in V$ and $n\ge n_0$.
\end{thm}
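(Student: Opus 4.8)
The plan is to reduce the statement to Troallic's Lemma~\ref{lem2.17A}, in exactly the spirit of the proof of Theorem~\ref{thm2.19A}, by encoding the sequence $\{\varphi_n\}$ together with its limit $\varphi$ as a single separately continuous map on a product space. First I would adjoin a point at infinity to the index set: let $Y=\mathbb{N}\cup\{\infty\}$ be the one-point compactification of the discrete space $\mathbb{N}$, a compact metrizable (hence compact T$_2$) space, and set $\varphi_\infty:=\varphi$. A map $f\colon Y\rightarrow X$ is continuous precisely when $f(n)\to f(\infty)$ as $n\to\infty$; thus the hypothesis $\varphi_n\to\varphi$ pointwise says exactly that, for each fixed $x\in X$, the map $n\mapsto\varphi_n(x)$ (with value $\varphi(x)$ at $\infty$) belongs to $C(Y,X)$.

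Next I would introduce $\psi\colon X\rightarrow C_{\mathfrak{p}}(Y,X)$ defined by $\psi(x)(n)=\varphi_n(x)$. By the previous paragraph each $\psi(x)$ is a well-defined element of $C(Y,X)$. To apply Lemma~\ref{lem2.17A} with $W=X$ (compact, hence locally compact T$_2$), compact T$_2$ factor $Y$, and uniform space $Z=X$, I must check that $\psi$ is continuous. By the remark following Lemma~\ref{lem2.17A} this is equivalent to the separate continuity of $\Psi\colon X\times Y\rightarrow X$, $(x,n)\mapsto\varphi_n(x)$: for fixed $n\in\mathbb{N}$ (resp.\ $n=\infty$) the partial map $x\mapsto\varphi_n(x)$ (resp.\ $x\mapsto\varphi(x)$) is continuous by hypothesis, while for fixed $x$ the partial map $n\mapsto\varphi_n(x)$ is continuous on $Y$ by pointwise convergence. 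Hence $\psi$ is continuous.

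Now fix $\varepsilon\in\mathscr{U}_X$ and choose a symmetric $\delta\in\mathscr{U}_X$ with $\delta\circ\delta\subseteq\varepsilon$. Put $\widetilde{\delta}\in\mathscr{U}_{(Y,X)}$, so that $(f,g)\in\widetilde{\delta}$ iff $(f(n),g(n))\in\delta$ for every $n\in Y$. Applying Lemma~\ref{lem2.17A} with the entourage $\widetilde{\delta}$ yields a dense open set $E_\varepsilon\subseteq X$ such that, for each $x_0\in E_\varepsilon$, the set $\psi^{-1}[\widetilde{\delta}[\psi(x_0)]]=\{x\in X\,|\,(\varphi_n(x_0),\varphi_n(x))\in\delta\ \forall n\in Y\}$ is a neighborhood of $x_0$; let $V\ni x_0$ be an open set contained in it. Then for any $x,y\in V$ and any $n$, combining $(\varphi_n(x_0),\varphi_n(x))\in\delta$ with the symmetric companion $(\varphi_n(y),\varphi_n(x_0))\in\delta$ gives $(\varphi_n(x),\varphi_n(y))\in\delta\circ\delta\subseteq\varepsilon$. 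This holds for every $n$, in particular for all $n\ge n_0:=1$, which is the asserted conclusion.

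The argument is short once the compactification is in place, so the only point requiring care is the verification that $\psi$ maps into $C(Y,X)$ and is continuous there, i.e.\ the separate-continuity check for $\Psi$; this is precisely where the standing pointwise-convergence hypothesis is consumed (continuity at $\infty$). I expect no genuine obstacle beyond this setup. Note that this route in fact delivers the $\varepsilon$-closeness uniformly in all $n$, which is slightly stronger than the stated form with $n\ge n_0$.
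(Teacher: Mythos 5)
Your proof is correct, but it follows a genuinely different route from the paper's. The paper argues directly by Baire category: it fixes $\delta^3\subset\beta$, $\beta^3\subset\varepsilon$, sets $K_n=\{z\in X\,|\,(\varphi(z),\varphi_m(z))\in\delta\ \forall m\ge n\}$, notes that pointwise convergence gives $X=\bigcup_n\overline{K}_n$ so some $\overline{K}_{n_0}$ has nonempty interior, upgrades the $\delta$-closeness of $\varphi_m$ to $\varphi$ from $K_{n_0}$ to its closure by continuity, and then relativizes the whole argument to an arbitrary nonempty open $W$ to obtain density. You instead compactify the index set, taking $Y=\mathbb{N}\cup\{\infty\}$ with $\varphi_\infty=\varphi$, verify that $(x,n)\mapsto\varphi_n(x)$ is separately continuous (which is exactly where the pointwise convergence and the continuity of each map are consumed), and invoke Troallic's Lemma~\ref{lem2.17A} with entourage $\widetilde{\delta}$, $\delta\circ\delta\subseteq\varepsilon$; this is the same device the paper uses for Theorem~\ref{thm2.19A}, so your argument has the virtue of unifying both genericity theorems under one lemma. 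The trade-off is that the paper's proof is self-contained modulo the Baire category theorem for compact T$_2$ spaces, whereas yours outsources the Baire/Namioka-type work to Lemma~\ref{lem2.17A}; in return your version is shorter and yields the formally stronger conclusion that the $\varepsilon$-closeness holds for all $n$ at once (so $n_0=1$) and also relative to the limit map $\varphi$ --- though, as you implicitly note, this strengthening is anyway recoverable from the stated form by shrinking $V$ to accommodate the finitely many indices below $n_0$. Both arguments are tied to the index set being a sequence (your one-point compactification needs $\mathbb{N}\cup\{\infty\}$ compact, the paper needs a countable union for Baire), which is all the theorem claims.
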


\begin{proof}
Let $\varepsilon,\beta,\delta\in\mathscr{U}_X$ be any given with $\delta^3\subset\beta$ and $\beta^3\subset\varepsilon$. For any integer $n\ge1$, set
\begin{gather*}
K_n=\left\{z\in X\,|\,(\varphi(z),\varphi_m(z))\in\delta\ \forall m\ge n\right\}.
\end{gather*}
Since $\varphi_n\to\varphi$ pointwise on $X$, $X=\bigcup_{n\ge1}K_n$ and so $X=\bigcup_{n\ge1}\overline{K}_n$. Thus for some $n_0\ge1$, $\overline{K}_{n_0}$ has nonempty interior $U$. Now for any $x\in U$ and any $m\ge n_0$, we have $(\varphi(x),\varphi_m(x))\in\beta$. Indeed, since $\varphi$ and $\varphi_m$ both are continuous functions on $X$, as $K_{n_0}\ni z\to x$, we have $(\varphi(x),\varphi(z))\in\delta$, $(\varphi_m(x),\varphi_m(z))\in\delta$ and $(\varphi(z),\varphi_m(z))\in\delta$ whence $(\varphi(x),\varphi_m(x))\in\beta$. By the continuity of $\varphi$ again, it follows that for any $x_0\in U$, one can find a neighborhood $V$ of $x_0$ such that
\begin{gather*}
(\varphi_m(x),\varphi_m(y))\in\varepsilon\quad \forall x,y\in V\textrm{ and }m\ge n_0.
\end{gather*}
By considering $\overline{W}$ of any nonempty open subset $W$ of $X$ instead of $X$ in the above argument, we can conclude Theorem~\ref{thm2.20A}.
\end{proof}

%%%%%%%%%%%%%%%%%%%%%%%%%%%%%%%%%%%%%%%%%%%%%%%%%
\subsection{Generalizations of Auslander's theorem: \textit{C}-semigroups}%%%
As the first application of Theorem~\ref{thm2.19A}, we will consider a semiflow with a \textit{C}-semigroup as our phase semigroup.

\begin{defn}[\cite{KM}]%%%
A topological semigroup $T$ is called a \textit{right C-semigroup} if for any $s\in T$, $T\setminus{Ts}$ is relatively compact in $T$.

Each topological group is of course a right \textit{C}-semigroup. Under the usual topology, $(\mathbb{Z}_+,+)$ and $(\mathbb{R}_+,+)$ both are right \textit{C}-semigroups.
\end{defn}

\begin{defn}\label{def2.22A}%%%
Let $(T,X)$ be a semiflow on any topological space $X$.
\begin{enumerate}
\item $(T,X)$ is called \textit{topologically transitive} if for all non-empty open sets $U,V$ in $X$, $U\cap t^{-1}V\not=\emptyset$ for some $t\in T$.

\item A point $x$ of $X$ is called a \textit{transitive point}, denoted by $x\in\textsl{Trans}\,(T,X)$, if $\textrm{cls}_XTx=X$. If $\textsl{Trans}\,(T,X)$ is non-empty, then $(T,X)$ is called \textit{point transitive}.
\end{enumerate}

It should be noted that in our situation, the topologically transitive $\not\Leftrightarrow$ the point transitive, for a semiflow $(T,X)$ with $T$ not a group. See \cite{AD}.
\end{defn}

\begin{lem}[{cf.~\cite[Lemma~4.11]{AD}}]\label{lem2.23A}%%%
Let $(T,X)$ be a semiflow with $T$ a right \textit{C}-semigroup. If $\textsl{Equi}_\varepsilon(T,X)\not=\emptyset$ for any $\varepsilon\in\mathscr{U}_X$, then $\textsl{Trans}\,(T,X)\subseteq\textsl{Equi}\,(T,X)$.
\end{lem}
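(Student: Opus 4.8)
The plan is to fix an arbitrary $\varepsilon\in\mathscr{U}_X$ and an arbitrary transitive point $x\in\textsl{Trans}\,(T,X)$, and to show that $x\in\textsl{Equi}_\varepsilon(T,X)$. Since $\textsl{Equi}\,(T,X)=\bigcap_{\varepsilon\in\mathscr{U}_X}\textsl{Equi}_\varepsilon(T,X)$, this will give $x\in\textsl{Equi}\,(T,X)$ and hence the desired inclusion $\textsl{Trans}\,(T,X)\subseteq\textsl{Equi}\,(T,X)$. The underlying idea is to transport the $\varepsilon$-equicontinuity from a suitable point on the orbit of $x$ back to $x$ itself; the key device is to split the acting times into the two pieces $Ts$ and $T\setminus Ts$ for a well-chosen $s\in T$, where the right $C$-semigroup hypothesis makes the second piece relatively compact while the first piece factors through $s$.

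First I would use the hypothesis that $\textsl{Equi}_\varepsilon(T,X)\neq\emptyset$ together with its openness: since $x$ is transitive, $Tx$ is dense in $X$ and therefore meets the nonempty open set $\textsl{Equi}_\varepsilon(T,X)$, so I may pick $s\in T$ with $sx\in\textsl{Equi}_\varepsilon(T,X)$. Fix $\delta'\in\mathscr{U}_X$ witnessing this, i.e. $t'(\delta'[sx])\subseteq\varepsilon[t'(sx)]$ for every $t'\in T$. For the piece $t\in Ts$, I would use continuity of the self-map $\pi_s$ at $x$ to choose $\delta_1\in\mathscr{U}_X$ with $s(\delta_1[x])\subseteq\delta'[sx]$; then writing $t=t's$ and taking any $y\in\delta_1[x]$ gives $sy\in\delta'[sx]$, whence $ty=t'(sy)\in\varepsilon[t'(sx)]=\varepsilon[tx]$.

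For the remaining piece $t\in T\setminus Ts$ I would invoke the $C$-semigroup condition: $K:=\textrm{cls}_T(T\setminus Ts)$ is compact, so $K\times X$ is compact and the jointly continuous action $\pi$ is uniformly continuous on it. This produces $\delta_2\in\mathscr{U}_X$ such that $(x,y)\in\delta_2$ forces $(tx,ty)\in\varepsilon$ for all $t\in K$, in particular for all $t\in T\setminus Ts$. Setting $\delta=\delta_1\cap\delta_2$, for every $y\in\delta[x]$ and every $t\in T$ exactly one of the two cases applies and yields $ty\in\varepsilon[tx]$; that is, $t(\delta[x])\subseteq\varepsilon[tx]$ for all $t\in T$, so $x\in\textsl{Equi}_\varepsilon(T,X)$, as required.

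I expect the main obstacle to be the piece $t\in T\setminus Ts$: the whole argument hinges on obtaining a single modulus of continuity $\delta_2$ valid uniformly over all such $t$ simultaneously, and this is exactly what fails for a general phase semigroup. The right $C$-semigroup hypothesis is precisely what rescues it, guaranteeing that $T\setminus Ts$ is relatively compact so that uniform continuity of $\pi$ on the compact slab $K\times X$ can be applied. The $Ts$-piece, by contrast, is routine once $s$ has been chosen on the orbit inside $\textsl{Equi}_\varepsilon(T,X)$.
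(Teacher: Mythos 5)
The paper offers no proof of this lemma — it is imported from \cite[Lemma~4.11]{AD} — so there is nothing in the text to compare against; your argument is correct and is the standard one for statements of this type. Density of $Tx$ together with (the paper's asserted) openness of the nonempty set $\textsl{Equi}_\varepsilon(T,X)$ yields $s$ with $sx\in\textsl{Equi}_\varepsilon(T,X)$, and the split $T=Ts\cup(T\setminus Ts)$ — with the right \textit{C}-semigroup hypothesis making the second piece relatively compact so that joint continuity on $\mathrm{cls}_T(T\setminus Ts)\times X$ gives a single $\delta_2$ — correctly transports $\varepsilon$-equicontinuity from $sx$ back to $x$. The only cosmetic point: with the literal definition, a neighborhood of a point of $\textsl{Equi}_{\varepsilon'}(T,X)$ is only guaranteed to lie in $\textsl{Equi}_{\varepsilon'\circ\varepsilon'}(T,X)$, so to be fully self-contained one should begin with $\varepsilon'$ satisfying $\varepsilon'\circ\varepsilon'\subseteq\varepsilon$; since the paper itself asserts openness of $\textsl{Equi}_\varepsilon(T,X)$, this does not affect the validity of your proof.
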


The following is another main result of this paper, which generalizes Auslander's Theorem~\ref{thm2.10A} for any topological group is a right \textit{C}-semigroup.

\begin{thm}\label{thm2.24A}%%%
Let $(T,X)$ be a minimal weakly almost periodic semiflow with $T$ a right \textit{C}-semigroup. Then $(T,X)$ is equicontinuous.
\end{thm}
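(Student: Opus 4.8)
The plan is to chain together the two regularity results already established — the genericity of equicontinuity points for \textit{w.a.p.} semiflows and the transitivity-to-equicontinuity upgrade available for right \textit{C}-semigroups — and to use minimality only to identify the set of transitive points with all of $X$. No new construction is required; the argument is an assembly of Theorem~\ref{thm2.19A}, Lemma~\ref{lem2.23A}, and the definition of minimality.

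First I would invoke Theorem~\ref{thm2.19A}: since $(T,X)$ is weakly almost periodic, for every $\varepsilon\in\mathscr{U}_X$ the set $\textsl{Equi}_\varepsilon(T,X)$ is dense and open in $X$, so in particular $\textsl{Equi}_\varepsilon(T,X)\neq\emptyset$ for each $\varepsilon$. This is exactly the hypothesis demanded by Lemma~\ref{lem2.23A}. Because $T$ is a right \textit{C}-semigroup and $\textsl{Equi}_\varepsilon(T,X)\neq\emptyset$ for all $\varepsilon\in\mathscr{U}_X$, that lemma yields the inclusion $\textsl{Trans}\,(T,X)\subseteq\textsl{Equi}\,(T,X)$.

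Finally I would bring in minimality. Since $(T,X)$ is minimal, $\textrm{cls}_XTx=X$ for every $x\in X$, so every point is transitive and $\textsl{Trans}\,(T,X)=X$. Combined with the previous inclusion this gives
\begin{gather*}
X=\textsl{Trans}\,(T,X)\subseteq\textsl{Equi}\,(T,X)\subseteq X,
\end{gather*}
hence $\textsl{Equi}\,(T,X)=X$, which by the characterization recorded after Lemma~\ref{lem2.17A} (namely, $(T,X)$ is equicontinuous iff $\textsl{Equi}\,(T,X)=X$) means precisely that $(T,X)$ is equicontinuous. The genuine difficulty has already been absorbed into Theorem~\ref{thm2.19A} and Lemma~\ref{lem2.23A}; the only point to check here is that the hypotheses feed exactly into their inputs — that weak almost periodicity supplies nonemptiness of $\textsl{Equi}_\varepsilon$ for \emph{every} $\varepsilon$, and that the right \textit{C}-semigroup structure is what promotes the (in minimal systems, universal) transitivity to global equicontinuity. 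So the one substantive remark is that minimality turns the inclusion $\textsl{Trans}\subseteq\textsl{Equi}$ into the equality $\textsl{Equi}\,(T,X)=X$.
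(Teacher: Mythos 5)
Your proposal is correct and follows exactly the same route as the paper's own proof: Theorem~\ref{thm2.19A} gives $\textsl{Equi}_\varepsilon(T,X)\neq\emptyset$ for every $\varepsilon\in\mathscr{U}_X$, Lemma~\ref{lem2.23A} gives $\textsl{Trans}\,(T,X)\subseteq\textsl{Equi}\,(T,X)$, and minimality gives $\textsl{Trans}\,(T,X)=X$. Nothing is missing.
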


\begin{proof}
First, $\textsl{Trans}\,(T,X)=X$. By Theorem~\ref{thm2.19A}, $\textsl{Equi}_\varepsilon(T,X)\not=\emptyset$ for $\varepsilon\in\mathscr{U}_X$. Thus by Lemma~\ref{lem2.23A}, $\textsl{Equi}\,(T,X)=X$ and so $(T,X)$ is equicontinuous.
\end{proof}

\begin{corr}\label{cor1}%%%
Let $(T,X)$ be a pointwise almost periodic and weakly almost periodic semiflow with $T$ an amenable right \textit{C}-semigroup. Then $(T,X)$ is a uniformly almost periodic semiflow.
\end{corr}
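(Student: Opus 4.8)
The plan is to invoke Theorem~\ref{thm1.6}: it suffices to show that $(T,X)$ is equicontinuous and that each $t\in T$ is a self-surjection of $X$. The surjectivity is immediate, since a pointwise almost periodic semiflow is pointwise minimal and $T$ is amenable, so Lemma~\ref{lem2.11A} applies. All the work therefore goes into proving equicontinuity, i.e. $\textsl{Equi}\,(T,X)=X$, equivalently $\textsl{Equi}_\varepsilon(T,X)=X$ for every $\varepsilon\in\mathscr{U}_X$.

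First I would record a propagation principle coming from the right \textit{C}-semigroup hypothesis: if $sx\in\textsl{Equi}_\varepsilon(T,X)$ for some $s\in T$, then $x\in\textsl{Equi}_\varepsilon(T,X)$. To see this, split $T=Ts\cup(T\setminus Ts)$. On $Ts$ one uses the equicontinuity at $sx$ together with the continuity of $\pi_s$ at $x$: choosing $\eta_0\in\mathscr{U}_X$ with $s(\eta_0[x])\subseteq\delta[sx]$, every $u=ts\in Ts$ satisfies $u(\eta_0[x])=t(s(\eta_0[x]))\subseteq t(\delta[sx])\subseteq\varepsilon[tsx]=\varepsilon[ux]$. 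On $T\setminus Ts$, which is relatively compact because $T$ is a right \textit{C}-semigroup, joint continuity of $\pi$ makes the family $\{\pi_u\,|\,u\in\textrm{cls}_T(T\setminus Ts)\}$ uniformly equicontinuous, so some $\eta_1\in\mathscr{U}_X$ gives $u(\eta_1[x])\subseteq\varepsilon[ux]$ for all such $u$. Then $\eta_0\cap\eta_1$ witnesses $x\in\textsl{Equi}_\varepsilon(T,X)$. (This is exactly the mechanism behind Lemma~\ref{lem2.23A}.) Consequently the complement $F_\varepsilon:=X\setminus\textsl{Equi}_\varepsilon(T,X)$ is closed and forward invariant, $TF_\varepsilon\subseteq F_\varepsilon$.

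Next I would exploit pointwise minimality. By Theorem~\ref{thm2.19A}, $\textsl{Equi}_\varepsilon(T,X)$ is dense and open, so $F_\varepsilon$ is closed and nowhere dense. Because every point is almost periodic, each orbit closure $\textrm{cls}_XTx$ is minimal; combined with forward invariance this forces both $\textsl{Equi}_\varepsilon(T,X)$ and $F_\varepsilon$ to be unions of entire minimal sets (if a single point of a minimal set lies in $F_\varepsilon$, then forward invariance and minimality drag the whole minimal set into $F_\varepsilon$, and symmetrically a point of $\textsl{Equi}_\varepsilon(T,X)$ keeps its entire minimal set out of $F_\varepsilon$). Thus it remains only to rule out a minimal set $M\subseteq F_\varepsilon$. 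For any such $M$, the subsemiflow $(T,M)$ is minimal and weakly almost periodic with $T$ a right \textit{C}-semigroup, hence \emph{equicontinuous} by Theorem~\ref{thm2.24A}; so $M$ carries equicontinuity relative to its own uniformity, while by assumption no point of $M$ is a point of equicontinuity of the ambient system.

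The hard part is precisely this last step: upgrading the relative equicontinuity of $(T,M)$ furnished by Theorem~\ref{thm2.24A} to genuine equicontinuity points of $(T,X)$ lying in $M$, i.e. controlling the orbits of points that are close to $M$ in $X$ but lie off $M$. I would attack it by combining the density of $\textsl{Equi}_\varepsilon(T,X)$ near $M$ (every neighborhood of a point of $M$ meets the ``good'' minimal sets filling the dense open set $\textsl{Equi}_\varepsilon(T,X)$) with the surjectivity of each $t$ and the uniform-equicontinuity-of-a-relatively-compact-family argument already used in the propagation principle, the aim being to show that the equicontinuity moduli at the nearby good points cannot degenerate as one approaches $M$. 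Once $M\subseteq F_\varepsilon$ is excluded for every $\varepsilon$, we obtain $\textsl{Equi}\,(T,X)=X$; together with surjectivity, Theorem~\ref{thm1.6} yields that $(T,X)$ is uniformly almost periodic.
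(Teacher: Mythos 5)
Your reduction is sound as far as it goes: surjectivity of each $t$ via Lemma~\ref{lem2.11A}, the propagation principle making $F_\varepsilon=X\setminus\textsl{Equi}_\varepsilon(T,X)$ closed and forward invariant, and the observation that pointwise minimality then forces $F_\varepsilon$ and $\textsl{Equi}_\varepsilon(T,X)$ to be unions of whole minimal sets are all correct. But the proof is not finished. Everything has been reduced to excluding a minimal set $M\subseteq F_\varepsilon$, and at exactly that point you stop proving and start describing an intention (``I would attack it by \dots the aim being to show that the equicontinuity moduli at the nearby good points cannot degenerate as one approaches $M$''). That step is the entire content of the non-minimal case: Theorem~\ref{thm2.24A} only gives equicontinuity of $(T,M)$ \emph{relative to $M$}, and relative equicontinuity of a minimal subset says nothing a priori about the orbits of points of $X\setminus M$ that start near $M$ --- which is precisely what membership of a point of $M$ in $\textsl{Equi}_\varepsilon(T,X)$ requires. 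No argument is supplied that the equicontinuity moduli at nearby ``good'' points do not degenerate, and it is not at all clear how the density of $\textsl{Equi}_\varepsilon(T,X)$ together with surjectivity would yield one. So there is a genuine gap.

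For comparison, the paper sidesteps this local difficulty entirely by routing the argument through distality and the Ellis semigroup. It first settles the minimal case (Theorem~\ref{thm2.24A} for equicontinuity, then Lemma~\ref{lem2.11A} and Theorem~\ref{thm1.6} for uniform almost periodicity). For the general pointwise almost periodic case it takes a minimal left ideal $I$ in $E(T,X)$; by claim (ii) in the proof of Theorem~\ref{thm2.16A}, $(T,I,\Pi)$ is a minimal weakly almost periodic semiflow, hence uniformly almost periodic by the minimal case, hence distal by Theorem~\ref{thm2.5}. The evaluation maps $p\mapsto p(x_0)$ are epimorphisms $(T,I)\rightarrow(T,\mathrm{cls}_XTx_0)$, so every point of $X$ is a distal point and $(T,X)$ is distal. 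Lemma~\ref{lem2.3} then makes $E(X)$ a group of continuous self-maps, Theorem~\ref{thm2.4} gives equicontinuity with each $t$ a self-homeomorphism, and Theorem~\ref{thm1.6} concludes. If you want to salvage your outline, the cleanest repair is to replace your unfinished last step by this distality-transfer argument rather than trying to control moduli of equicontinuity near $M$.
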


\begin{proof}
First of all, if $(T,X)$ is a minimal, then $(T,X)$ is a uniformly almost periodic semiflow.
Indeed, by Theorem~\ref{thm2.24A}, $(T,X)$ is minimal equicontinuous. Moreover, by Lemma~\ref{lem2.11A}, each $t\in T$ is a surjection of $X$ so $(T,X)$ is uniformly almost periodic from Theorem~\ref{thm1.6}.

Now assume that $(T,X)$ is pointwise almost periodic.
Let $I$ be a minimal left ideal in $E(T,X)$; then by (ii) in the proof of Theorem~\ref{thm2.16A}, $(T,I,\Pi)$ is a minimal \textit{w.a.p.} semiflow. Then $(T,I,\Pi)$ is \textit{u.a.p.} and so it is a distal semiflow by Theorem~\ref{thm2.5}.

Let $x_0\in X$ be any given and define a continuous map $\phi\colon I\rightarrow\textrm{cls}_XTx_0$ by $p\mapsto p(x_0)$. Clearly, $(T,I)\xrightarrow{\phi}(T,\textrm{cls}_XTx_0)$ is an epimorphism so that each point of $\textrm{cls}_XTx_0$ is a distal point for $(T,X)$. Thus $(T,X)$ is a distal \textit{w.a.p.} semiflow. Then by Lemma~\ref{lem2.3}, $E(X)$ is a group of continuous self-maps of $X$. Further by Theorem~\ref{thm2.4}, $(T,X)$ is equicontinuous with each $t\in X$ a self-homeomorphism of $X$. Thus $(T,X)$ is \textit{u.a.p.} by Theorem~\ref{thm1.6}.
\end{proof}

Similarly we can obtain the following result.

\begin{corr}
Let $(T,X)$ be a pointwise almost periodic and weakly almost periodic semiflow with $T$ a right \textit{C}-semigroup such that each $t\in T$ is a self-surjection of $X$. Then $(T,X)$ is a uniformly almost periodic semiflow.
\end{corr}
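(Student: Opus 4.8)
The plan is to follow the pattern of the proof of Corollary~\ref{cor1}, with the amenability hypothesis (which there supplied surjectivity through Lemma~\ref{lem2.11A}) now replaced by the standing assumption that each $t\in T$ is a self-surjection of $X$. Concretely, I would aim to show first that $(T,X)$ is distal, and then invoke the by-now familiar chain: a distal \textit{w.a.p.} semiflow has, by Lemma~\ref{lem2.3}, an Ellis semigroup $E(X)$ that is a group with neutral element $\textit{id}_X$, consisting of continuous maps since $(T,X)$ is \textit{w.a.p.}; hence by Theorem~\ref{thm2.4} the action is equicontinuous, and since each $t$ is already assumed surjective, Theorem~\ref{thm1.6} upgrades this to uniform almost periodicity.

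To establish distality I would argue pointwise. Fix $x_0\in X$ and put $M=\textrm{cls}_XTx_0$; because $(T,X)$ is pointwise almost periodic, $M$ is a minimal set, and $(T,M)$ is a minimal \textit{w.a.p.} subsemiflow of $(T,X)$ with the same right \textit{C}-semigroup $T$. Theorem~\ref{thm2.24A} then makes $(T,M)$ equicontinuous. If I can show in addition that each $t\in T$ restricts to a surjection of $M$, then Theorem~\ref{thm2.5} yields that $(T,M)$ is distal, so that $x_0$ admits no proximal partner inside its own orbit closure; as $x_0$ was arbitrary, $(T,X)$ is pointwise distal, i.e.\ distal in the sense of Definition~\ref{def2.2}.

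The crux is therefore the transfer of surjectivity from $X$ to the individual minimal sets, and this is exactly where the new hypothesis does the work that amenability did in Corollary~\ref{cor1}. I would prove the following elementary lemma: if $(T,X)$ is pointwise almost periodic and every $\pi_t$ is onto $X$, then every $\pi_t$ is onto each minimal set $M$. Indeed, $\pi_tM\subseteq M$ by invariance; conversely, given $y\in M$, surjectivity on $X$ provides $z\in X$ with $\pi_tz=y$, and since $z$ is almost periodic its orbit closure $\textrm{cls}_XTz$ is minimal and contains $y=\pi_tz$. Two minimal sets meeting in the point $y$ must coincide, whence $\textrm{cls}_XTz=M$ and $z\in M$; thus $y\in\pi_tM$. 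This is the step I expect to be the delicate one, not because the argument is long but because equicontinuity alone is \emph{not} enough to force distality for semiflows (the surjectivity hypothesis in Theorem~\ref{thm2.5} is essential, as the remark following it stresses), so the whole reduction hinges on recovering surjectivity on each minimal piece from the global hypothesis.

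With the lemma in hand the proof closes as above: each orbit closure is distal, hence $(T,X)$ is distal, and the Ellis-semigroup argument of the first paragraph delivers uniform almost periodicity. I note that, just as in Corollary~\ref{cor1}, one need not pass through a minimal left ideal of $E(X)$; working directly with the orbit closures $\textrm{cls}_XTx_0$ keeps the disjointness-of-minimal-sets argument transparent and isolates precisely the role played by the surjectivity assumption.
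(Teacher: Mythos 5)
Your proof is correct, but it establishes the key step (distality) by a genuinely different route from the one the paper intends when it says the corollary follows ``similarly'' to Corollary~\ref{cor1}. The paper's template passes through a minimal left ideal $I$ of $E(T,X)$: claim (ii) in the proof of Theorem~\ref{thm2.16A} makes $(T,I,\Pi)$ a minimal \textit{w.a.p.}\ semiflow, the minimal case shows it is \textit{u.a.p.}\ and hence distal via Theorem~\ref{thm2.5}, and the evaluation map $p\mapsto p(x_0)$ pushes distality down to each orbit closure. In Corollary~\ref{cor1} the surjectivity needed on $I$ came from amenability through Lemma~\ref{lem2.11A}; in the present corollary a literal transcription would instead require that each $t$ map $I$ onto itself, which is not an immediate consequence of surjectivity on $X$. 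You sidestep this entirely by working with the orbit closures $M=\mathrm{cls}_XTx_0$ and proving the transfer of surjectivity there: if $tz=y\in M$ with $z$ almost periodic, then $\mathrm{cls}_XTz$ is a minimal set meeting $M$ at $y$, hence equals $M$, so $z\in M$ and $tM=M$. That little lemma is correct (it uses only that intersecting minimal sets coincide and that $e\in T$ puts $z$ in its own orbit closure) and it does exactly the work amenability did before. From there your chain --- Theorem~\ref{thm2.24A} for equicontinuity of $(T,M)$, Theorem~\ref{thm2.5} for its distality, hence pointwise distality of $(T,X)$, then Lemma~\ref{lem2.3}, Lemma~\ref{lem2.6}, Theorem~\ref{thm2.4}, and Theorem~\ref{thm1.6} --- coincides with the paper's ending in Corollary~\ref{cor1}. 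The net gain of your version is that it postpones the Ellis-semigroup machinery to the final step and isolates precisely where the surjectivity hypothesis replaces amenability; your only inaccuracy is the closing aside that ``just as in Corollary~\ref{cor1}'' one avoids minimal left ideals, since the paper's proof of Corollary~\ref{cor1} does use one.
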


It should be noticed that under the context of Theorem~\ref{thm2.24A}, we do not know if $(T,X)$ is uniformly almost periodic if without the amenability. In addition, Theorem~\ref{thm1.6} plays a role in the proof of above Corollary~\ref{cor1} for $T$ need not be a normal subgroup of $E(X)$.

\begin{defn}[\cite{DT,AD}]%%%
A semiflow $(T,X)$ is referred to as a \textit{syndetically transitive semiflow} in case
$$N_T(U,V)=\{t\in T\,|\,U\cap t^{-1}V\not=\emptyset\}$$
is syndetic in $T$ for all non-empty open subsets $U,V$ of $X$.
\end{defn}

If $(T,X)$ is topologically transitive with a dense set of almost periodic points, then it is syndetically transitive (cf.~\cite[Lemma~4.2]{AD} and \cite{DT}).

\begin{defn}[\cite{KM,AD}]%%%
A semiflow $(T,X)$ is said to be \textit{sensitive} if there exists an $\varepsilon\in\mathscr{U}_X$ such that for every $x\in X$ and any $\delta\in\mathscr{U}_X$, one can find some $y\in\delta[x]$ and $t\in T$ with $ty\not\in\varepsilon[tx]$.
\end{defn}

\begin{lem}[{cf.~\cite[Lemma~1.43]{AD}}]\label{lem2.24}%%%
A semiflow $(T,X)$ is not sensitive iff $\textsl{Equi}_\varepsilon(T,X)\not=\emptyset$ for all $\varepsilon\in\mathscr{U}_X$.
\end{lem}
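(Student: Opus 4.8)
The plan is to recognize that the two conditions in the statement are simply logical negations of one another once the quantifiers are fully unwound. Concretely, I would first translate sensitivity into the language of the sets $\textsl{Equi}_\varepsilon(T,X)$, establishing the auxiliary equivalence
\begin{gather*}
(T,X)\text{ is sensitive}\iff \exists\,\varepsilon\in\mathscr{U}_X\text{ such that }\textsl{Equi}_\varepsilon(T,X)=\emptyset,
\end{gather*}
and then pass to the contrapositive to obtain exactly the asserted equivalence.

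The core computation is to unwind the negation of membership in $\textsl{Equi}_\varepsilon(T,X)$. By definition, $x\in\textsl{Equi}_\varepsilon(T,X)$ means there is some $\delta\in\mathscr{U}_X$ with $t(\delta[x])\subseteq\varepsilon[tx]$ for every $t\in T$. Negating this, $x\notin\textsl{Equi}_\varepsilon(T,X)$ holds if and only if for every $\delta\in\mathscr{U}_X$ one can find $t\in T$ with $t(\delta[x])\not\subseteq\varepsilon[tx]$, i.e.\ some $y\in\delta[x]$ and some $t\in T$ with $ty\notin\varepsilon[tx]$. Thus $\textsl{Equi}_\varepsilon(T,X)=\emptyset$ says precisely that for every $x\in X$ and every $\delta\in\mathscr{U}_X$ there exist $y\in\delta[x]$ and $t\in T$ with $ty\notin\varepsilon[tx]$, which is exactly the defining property of sensitivity with modulus $\varepsilon$. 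Quantifying over the choice of $\varepsilon$ then yields the auxiliary equivalence displayed above.

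Finally, taking contrapositives, $(T,X)$ fails to be sensitive if and only if no $\varepsilon$ makes $\textsl{Equi}_\varepsilon(T,X)$ empty, that is, $\textsl{Equi}_\varepsilon(T,X)\neq\emptyset$ for every $\varepsilon\in\mathscr{U}_X$, which is the claim. Since the argument is a pure unwinding of definitions, there is no serious obstacle; the only point demanding care is the bookkeeping of the nested quantifiers---in particular verifying that ``$t(\delta[x])\not\subseteq\varepsilon[tx]$ for some $t$'' is correctly read as ``$\exists\,y\in\delta[x]\ \exists\,t\in T:\ ty\notin\varepsilon[tx]$,'' and that the order of the existential quantifiers over $y$ and $t$ is immaterial.
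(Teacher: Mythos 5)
Your proof is correct: with the definitions of sensitivity and of $\textsl{Equi}_\varepsilon(T,X)$ exactly as stated in this paper, the two sides of the equivalence are indeed literal logical negations of one another, and your quantifier bookkeeping (in particular the reading of $t(\delta[x])\not\subseteq\varepsilon[tx]$ as $\exists\,y\in\delta[x]$ with $ty\notin\varepsilon[tx]$) is accurate. The paper gives no proof of its own, deferring to the cited reference, so there is nothing further to compare; your definitional unwinding is the expected argument.
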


\begin{lem}[{\cite[Lemma~4.5]{AD}}]\label{lem2.25}%%%
Let $(T,X)$ be a non-minimal syndetically transitive semiflow. Then $(T,X)$ is sensitive.
\end{lem}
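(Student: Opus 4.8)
The plan is to argue by contradiction through the characterization of sensitivity recorded in Lemma~\ref{lem2.24}: since $(T,X)$ is sensitive as soon as $\textsl{Equi}_\varepsilon(T,X)=\emptyset$ for a \emph{single} $\varepsilon\in\mathscr{U}_X$, it is enough to manufacture, out of the non-minimality, one gauge $\varepsilon$ that no point can satisfy. Concretely, I would first fix a minimal set $M\subsetneq X$ (which exists by Zorn's lemma in the compact space $X$) and, using $M\neq X$, a point $b\in X\setminus M$. By regularity of the compact T$_2$-space $X$ I would then pick a symmetric $\varepsilon\in\mathscr{U}_X$ so small that $\varepsilon^4[b]\cap M=\emptyset$; this guarantees that the ``near-$b$'' set $\varepsilon^2[b]$ and the ``near-$M$'' set $\bigcup_{m\in M}\varepsilon^2[m]$ are disjoint. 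These two disjoint regions are the two targets of the argument, and $\varepsilon$ will be the sensitivity constant.

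Next I would reduce the goal to a \emph{common-time separation}. It suffices to produce, for every $x\in X$ and every neighbourhood $U$ of $x$, two points $y_1,y_2\in U$ and one $t\in T$ with $(ty_1,ty_2)\notin\varepsilon^2$; a triangle-inequality argument (if both $ty_i\in\varepsilon[tx]$ then $(ty_1,ty_2)\in\varepsilon^2$) then places some $ty_i$ outside $\varepsilon[tx]$, so $x\notin\textsl{Equi}_\varepsilon(T,X)$, and $\textsl{Equi}_\varepsilon(T,X)=\emptyset$ yields sensitivity by Lemma~\ref{lem2.24}. To locate such a pair I would use transitivity twice. Fixing $q\in M$, transitivity provides a point of $U$ whose orbit meets $\varepsilon[b]$; by joint continuity I would shrink to an open $U_1\subseteq U$ and a time $s$ with $\pi_s(U_1)\subseteq\varepsilon^2[b]$, so the whole of $U_1$ is driven into the near-$b$ target at time $s$. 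Transitivity applied to $U_1$ and $\varepsilon[q]$ then yields a point of $U_1$ whose orbit also visits the near-$M$ target. Whenever these two excursions can be forced to happen at one and the same time $t$, the disjointness of the targets produces the required $\varepsilon^2$-separation of a point landing near $b$ from a point landing near $M$.

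The heart of the matter---and the step I expect to be the main obstacle---is exactly this synchronisation. Plain transitivity delivers the two excursions at possibly unrelated instants, and, as is characteristic of the two-target method, an orbit that merely wanders between the two regions at different times gives no sensitivity witness (indeed a single $\varepsilon$-equicontinuity point whose orbit visits both regions is perfectly consistent with mere topological transitivity). This is precisely where the full strength of \emph{syndetic} transitivity, rather than bare transitivity, must be spent, together with the minimality of $M$. The route I would pursue is to exploit that the return times driving $U_1$ into $\varepsilon[b]$ form a \emph{syndetic} subset of $T$, while the almost periodicity of the minimal point $q$ (so that $N_T(q,W)$ is syndetic for every neighbourhood $W$ of $q$) can be leveraged, after pulling a suitable point of $U_1$ back through the time-$s$ map, to make the times spent in the near-$M$ target \emph{thick} (i.e.\ a set that contains a left-translate $Kt$ of every compact $K\subseteq T$). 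Since a syndetic set meets every thick set, this forces a common time $t$ at which one point of $U$ lies in $\varepsilon^2[b]$ and another in the near-$M$ region, closing the argument. Establishing the thickness (or, failing that, coercing the coincidence by another mechanism) is the genuinely delicate point, since $\pi_s$ need not be an open map and the image $\pi_s(U_1)$ cannot be freely re-steered.

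Finally, running this scheme for the single gauge $\varepsilon$ fixed at the outset gives $\textsl{Equi}_\varepsilon(T,X)=\emptyset$, whence $(T,X)$ is sensitive by Lemma~\ref{lem2.24}. I would expect the routine parts---the choice of $M$, $b$ and $\varepsilon$, the continuity refinements producing $U_1$, and the triangle-inequality reduction---to be straightforward; the only essential difficulty is the syndetic-versus-thick synchronisation of the two excursions. An alternative to pursue for that same step, more in line with the Ellis-semigroup machinery of the paper, would be to exhibit through the enveloping semigroup a proximal pair one of whose members is pushed into $\varepsilon[b]$ while the other stays near $M$; either way, the syndetic transitivity hypothesis is indispensable, in keeping with the remark after Definition~\ref{def2.22A} that for non-group $T$ mere topological transitivity is strictly weaker.
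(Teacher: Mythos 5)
The paper itself gives no proof of Lemma~\ref{lem2.25}; it is quoted verbatim from \cite[Lemma~4.5]{AD}, so the benchmark is the Glasner--Weiss two-target argument as adapted there to semigroup actions. Your setup agrees with it: a minimal set $M\subsetneq X$, a point $b\notin M$, a symmetric $\varepsilon\in\mathscr{U}_X$ with $\varepsilon^4[b]\cap M=\emptyset$, and the triangle-inequality reduction to producing two points of an arbitrary neighbourhood $U$ of $x$ that are $\varepsilon^2$-separated at a \emph{common} time. Up to there everything you write is fine, and you have correctly located the one nontrivial step.

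But that step is left open, and the mechanism you propose for it would not work as stated. The return-time set $N_T(y,\varepsilon[M])$ of a fixed $y\in U$ is \emph{not} thick in general: transitivity only places $sy$ near a fixed $q\in M$ for one $s$, and $k(sy)$ stays near $kq\in M$ only for $k$ in a compact set that must be chosen \emph{before} deciding how close $sy$ is to $q$; no single $y$ serves all compact sets at once. Likewise the syndeticity of $N_T(q,W)$ (almost periodicity of $q$) is not the relevant input --- all that is used about $M$ is its invariance, $Tq\subseteq M$. The correct synchronisation reverses your quantifiers and spends the compactness of the syndeticity constant itself: since $A=N_T(U,\varepsilon[b])$ is syndetic, fix a compact $K\subseteq T$ with $Kt\cap A\neq\emptyset$ for every $t\in T$; by joint continuity of $\pi$ on $K\times\{q\}$ choose a neighbourhood $W$ of $q$ with $(kw,kq)\in\varepsilon$ for all $k\in K$ and $w\in W$; by plain transitivity pick $s\in T$ and $y\in U$ with $sy\in W$; then some $k\in K$ has $ks\in A$, yielding $y'\in U$ with $(ks)y'\in\varepsilon[b]$, while $(ks)y=k(sy)\in\varepsilon[kq]\subseteq\varepsilon[M]$. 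Since $\varepsilon^4[b]\cap M=\emptyset$, the pair $\bigl((ks)y,(ks)y'\bigr)$ cannot lie in $\varepsilon^2$, and your triangle-inequality reduction then shows $x\notin\textsl{Equi}_\varepsilon(T,X)$; as $x$ was arbitrary, $(T,X)$ is sensitive with constant $\varepsilon$ (Lemma~\ref{lem2.24} is not even needed). So your skeleton is the right one, but the decisive point is the ``compact correction'' $k\in K$, not a thickness claim about a single orbit.
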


\begin{corollary}\label{c1}%%%
Every weakly almost periodic syndetically transitive semiflow is minimal. Hence if $(T,X)$ is a weakly almost periodic syndetically transitive flow, then it is minimal uniformly almost periodic.
\end{corollary}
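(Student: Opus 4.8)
The plan is to prove both assertions by reducing them to the characterizations of sensitivity and equicontinuity already assembled in the excerpt, so that the whole argument becomes a short chain of implications terminating in a contradiction.

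For the first assertion I would argue by contradiction. Suppose $(T,X)$ is a weakly almost periodic syndetically transitive semiflow that is \emph{not} minimal. Then Lemma~\ref{lem2.25} applies directly and tells us that $(T,X)$ is sensitive. The key move is now to translate sensitivity into the language of pointwise equicontinuity: by the characterization in Lemma~\ref{lem2.24}, $(T,X)$ is sensitive precisely when there exists some $\varepsilon\in\mathscr{U}_X$ with $\textsl{Equi}_\varepsilon(T,X)=\emptyset$. On the other hand, weak almost periodicity is exactly the hypothesis needed for the first genericity theorem: by Theorem~\ref{thm2.19A}, $\textsl{Equi}_\varepsilon(T,X)$ is a dense open subset of $X$ — in particular nonempty — for \emph{every} $\varepsilon\in\mathscr{U}_X$. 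These two statements are incompatible, and the contradiction forces $(T,X)$ to be minimal. Note that this half requires only that $T$ be a phase semigroup, so it holds in the stated generality.

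For the second assertion I would specialize to the case that $T$ is a group. By the first part the flow $(T,X)$ is minimal and weakly almost periodic. Since a minimal flow is weakly almost periodic if and only if it is equicontinuous (Auslander's Theorem~\ref{thm2.10A}), $(T,X)$ is equicontinuous. Because $T$ is a group, each $t\in T$ is a self-homeomorphism of $X$, hence in particular a self-surjection, so Theorem~\ref{thm1.6} (equivalently, Ellis' Theorem~\ref{thm2.4}) upgrades equicontinuity to uniform almost periodicity. This yields that $(T,X)$ is minimal and uniformly almost periodic, as claimed. (Alternatively, since every topological group is a right \textit{C}-semigroup, one may invoke Theorem~\ref{thm2.24A} in place of Theorem~\ref{thm2.10A} to obtain equicontinuity, and then conclude by Theorem~\ref{thm1.6} exactly as above.)

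I do not expect a genuine obstacle here: the proof is a matter of citing the correct earlier results in the right order. The only point that needs care — and the conceptual heart of the argument — is the dictionary provided by Lemma~\ref{lem2.24}, which converts the purely dynamical notion of sensitivity into the nonemptiness of the sets $\textsl{Equi}_\varepsilon(T,X)$; it is precisely this reformulation that lets the density conclusion of Theorem~\ref{thm2.19A} collide with sensitivity and thereby rule out non-minimality.
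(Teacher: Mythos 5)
Your proof is correct and follows essentially the same route as the paper: the first assertion rests on exactly the same chain (Theorem~\ref{thm2.19A} together with Lemma~\ref{lem2.24} gives non-sensitivity, and Lemma~\ref{lem2.25} then forces minimality), merely phrased as a contradiction rather than as a contrapositive. For the second assertion the paper cites Theorem~\ref{thm2.16A} where you cite Theorem~\ref{thm2.10A} plus Theorem~\ref{thm1.6}, but since minimality has already been established these amount to the same conclusion.
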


\begin{proof}
Let $(T,X)$ be a \textit{w.a.p.} syndetically transitive semiflow. By Theorem~\ref{thm2.19A} and Lemma~\ref{lem2.24}, $(T,X)$ is not sensitive. Thus $(T,X)$ is minimal by Lemma~\ref{lem2.25}. The second part follows at once from the first part and Theorem~\ref{thm2.16A}.
\end{proof}

\begin{corollary}\label{c2}%%%
If $(T,X)$ is a weakly almost periodic semiflow, then it is not sensitive.
\end{corollary}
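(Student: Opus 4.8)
The plan is to obtain this as an immediate consequence of the genericity theorem already established for weakly almost periodic semiflows, combined with the characterization of non-sensitivity in terms of the sets $\textsl{Equi}_\varepsilon(T,X)$. The two ingredients needed are precisely Theorem~\ref{thm2.19A} and Lemma~\ref{lem2.24}, so the argument is essentially a matter of chaining them together.

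Concretely, I would proceed as follows. Fix an arbitrary $\varepsilon\in\mathscr{U}_X$. Since $(T,X)$ is weakly almost periodic by hypothesis, Theorem~\ref{thm2.19A} applies and tells us that $\textsl{Equi}_\varepsilon(T,X)$ is a dense open subset of $X$. In particular, being dense in the compact (hence nonempty) space $X$, the set $\textsl{Equi}_\varepsilon(T,X)$ is nonempty. As $\varepsilon$ was arbitrary, we conclude that $\textsl{Equi}_\varepsilon(T,X)\neq\emptyset$ for every $\varepsilon\in\mathscr{U}_X$.

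At this point Lemma~\ref{lem2.24} finishes the argument directly: it states that a semiflow $(T,X)$ fails to be sensitive if and only if $\textsl{Equi}_\varepsilon(T,X)\neq\emptyset$ for all $\varepsilon\in\mathscr{U}_X$. Having just verified the right-hand condition, we deduce that $(T,X)$ is not sensitive, which is exactly the assertion of the corollary.

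I do not anticipate any genuine obstacle here, since the substantive work has already been done in proving Theorem~\ref{thm2.19A} (via Troallic's Lemma~\ref{lem2.17A}) and in establishing the equivalence of Lemma~\ref{lem2.24}. The only point worth a moment's care is the passage from \emph{dense} to \emph{nonempty}, which is trivial because $X$ is nonempty; one could even invoke merely the weaker nonemptiness part of Theorem~\ref{thm2.19A} rather than its full density conclusion. Thus the corollary is best presented as a short, two-sentence deduction from the preceding results.
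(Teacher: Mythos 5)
Your proposal is correct and follows exactly the paper's own route: the paper likewise deduces the corollary immediately from Theorem~\ref{thm2.19A} (density, hence nonemptiness, of $\textsl{Equi}_\varepsilon(T,X)$) together with the characterization of non-sensitivity in Lemma~\ref{lem2.24}. No issues to report.
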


\begin{proof}
This follows easily from Theorem~\ref{thm2.19A} and Lemma~\ref{lem2.24}.
\end{proof}

We will need the following dichotomy theorem of syndetically transitive semiflows.

\begin{lem}[{cf.~\cite[Theorem~4.12]{AD}}]\label{lem2.26}%%%
Let $(T,X)$ be a syndetically transitive semiflow with $T$ a right \textit{C}-semigroup. Then $(T,X)$ is either equicontinuous or sensitive.
\end{lem}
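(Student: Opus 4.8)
The statement is a dichotomy, so the natural plan is to fix the ``negative'' branch and derive the ``positive'' one: assuming $(T,X)$ is \emph{not} sensitive, I would show that it must be equicontinuous. The three ingredients already available---Lemmas~\ref{lem2.23A}, \ref{lem2.24} and \ref{lem2.25}---fit together almost immediately once one notices how non-sensitivity interacts with syndetic transitivity.

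First I would record the two consequences of non-sensitivity. By Lemma~\ref{lem2.24}, ``$(T,X)$ not sensitive'' is equivalent to $\textsl{Equi}_\varepsilon(T,X)\neq\emptyset$ for every $\varepsilon\in\mathscr{U}_X$. Since $T$ is a right \textit{C}-semigroup, Lemma~\ref{lem2.23A} then upgrades this to the inclusion $\textsl{Trans}\,(T,X)\subseteq\textsl{Equi}\,(T,X)$: every transitive point is already a point of equicontinuity. This is where the right \textit{C}-semigroup hypothesis is spent.

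The key step, and the one I expect to carry the argument, is to observe that a non-sensitive syndetically transitive semiflow is automatically minimal. This is exactly the contrapositive of Lemma~\ref{lem2.25}: a non-minimal syndetically transitive semiflow is sensitive, so if $(T,X)$ is syndetically transitive and not sensitive it cannot be non-minimal, i.e. it is minimal. Minimality forces $\textrm{cls}_XTx=X$ for every $x\in X$, whence $\textsl{Trans}\,(T,X)=X$.

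Combining the two preceding paragraphs finishes the proof: $X=\textsl{Trans}\,(T,X)\subseteq\textsl{Equi}\,(T,X)\subseteq X$, so $\textsl{Equi}\,(T,X)=X$ and $(T,X)$ is equicontinuous. Thus either $(T,X)$ is sensitive, or it is equicontinuous, and the dichotomy follows. The only genuinely delicate implication is ``non-sensitive $+$ syndetically transitive $\Rightarrow$ minimal''; everything else is a mechanical assembly of the cited lemmas, and no new estimate or construction is required beyond what Lemmas~\ref{lem2.23A}, \ref{lem2.24} and \ref{lem2.25} already supply.
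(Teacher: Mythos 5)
Your proof is correct. Note that the paper itself gives no proof of this lemma---it is imported verbatim from \cite[Theorem~4.12]{AD}---so there is nothing in-paper to compare against; but your assembly of Lemma~\ref{lem2.24} (non-sensitive $\Leftrightarrow$ $\textsl{Equi}_\varepsilon(T,X)\neq\emptyset$ for all $\varepsilon$), Lemma~\ref{lem2.23A} (the right \textit{C}-semigroup step giving $\textsl{Trans}\,(T,X)\subseteq\textsl{Equi}\,(T,X)$), and the contrapositive of Lemma~\ref{lem2.25} (non-sensitive $+$ syndetically transitive $\Rightarrow$ minimal, hence $\textsl{Trans}\,(T,X)=X$) is exactly the natural derivation from the cited ingredients and closes the dichotomy cleanly.
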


Then we can obtain another main result of this paper.

\begin{thm}\label{thm2.27}%%%
Let $(T,X)$ be a syndetically transitive semiflow with $T$ a right \textit{C}-semigroup. If $(T,X)$ is weakly almost periodic, then it is minimal equicontinuous.
\end{thm}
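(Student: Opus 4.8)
The plan is to combine the dichotomy for syndetically transitive semiflows with the non-sensitivity that weak almost periodicity forces, and then to read off minimality from a result already established in this very setting. No new analytic input is needed; the whole proof is an assembly of three prior facts.

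First I would observe that the \textit{w.a.p.} hypothesis immediately rules out sensitivity. By Theorem~\ref{thm2.19A}, $\textsl{Equi}_\varepsilon(T,X)$ is a dense open set, in particular non-empty, for every $\varepsilon\in\mathscr{U}_X$; by the criterion of Lemma~\ref{lem2.24} this says precisely that $(T,X)$ is \emph{not} sensitive. (This is exactly the content of Corollary~\ref{c2}, which I would simply cite.) So the starting observation is that our semiflow fails to be sensitive.

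Next I would invoke the dichotomy theorem, Lemma~\ref{lem2.26}: since $(T,X)$ is syndetically transitive and $T$ is a right \textit{C}-semigroup, the semiflow is \emph{either} equicontinuous \emph{or} sensitive. Having excluded the second alternative in the previous step, the only possibility left is that $(T,X)$ is equicontinuous.

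Finally, minimality is already guaranteed by the hypotheses in force: by Corollary~\ref{c1}, every weakly almost periodic syndetically transitive semiflow is minimal. Combining this with the equicontinuity just obtained gives that $(T,X)$ is minimal equicontinuous, which is the assertion. The only point that deserves a moment's care—rather than a genuine obstacle—is confirming that the ``sensitive'' appearing in the dichotomy of Lemma~\ref{lem2.26} and the ``not sensitive'' coming out of Corollary~\ref{c2} refer to the same notion; they do, both being the $\textsl{Equi}_\varepsilon$-based definition, so the excluded-middle step is legitimate and the proof closes.
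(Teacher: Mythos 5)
Your proposal is correct and follows exactly the same route as the paper's own proof: Corollary~\ref{c2} to rule out sensitivity, the dichotomy of Lemma~\ref{lem2.26} to conclude equicontinuity, and Corollary~\ref{c1} to obtain minimality. Nothing further is needed.
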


\begin{proof}
By Corollary~\ref{c2} to Lemma~\ref{lem2.25}, $(T,X)$ is not sensitive. So $(T,X)$ is equicontinuous by Lemma~\ref{lem2.26}.
Further by Corollary~\ref{c1} to Lemma~\ref{lem2.25}, $(T,X)$ is minimal equicontinuous.
\end{proof}
\subsection{Generalizations of Auslander's theorem: amenable semigroups}%%%
We note that $(\mathbb{R}_+^n,+), n\ge2$ is an amenable semigroup but not a right \textit{C}-semigroup. Thus Theorem~\ref{thm2.15A} and the following result are beyond Theorems~\ref{thm2.24A} and \ref{thm2.27} above.

\begin{thm}\label{thm2.31A}%%%
Let $(T,X)$ be a minimal weakly almost periodic semiflow with $T$ an amenable semigroup such that each $t\in T$ is self-bijection of $X$. Then $(T,X)$ is uniformly almost periodic.
\end{thm}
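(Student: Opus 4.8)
The plan is to pass to the Ellis semigroup, use amenability to produce an invariant measure that pins down the unique idempotent of a minimal left ideal as $\textit{id}_X$, and then convert the resulting distality into genuine equicontinuity via the genericity Theorem~\ref{thm2.19A}.

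First I would record the standing reductions. Since $(T,X)$ is minimal, every point is almost periodic, so $(T,X)$ has a (dense) set of almost periodic points, and each $t\in T$, being a continuous bijection of the compact Hausdorff space $X$, is a self-homeomorphism. Because $T$ is amenable, fix a $T$-invariant Borel probability measure $\mu$ on $X$; as $\textrm{supp}\,\mu$ is a nonempty closed invariant set, minimality gives $\textrm{supp}\,\mu=X$. By Lemma~\ref{lem2.6} the \textit{w.a.p.} hypothesis yields $E(X)\subset C(X,X)$. Let $I$ be a minimal left ideal of $E(X)$ and let $u\in J(I)$ be an idempotent.

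The crux is to show $u=\textit{id}_X$. I would first argue that every $p\in E(X)$ preserves $\mu$. For $f\in C(X,\mathbb{R})$ the function $p\mapsto\int_X f\circ p\,d\mu$ is constant, equal to $\int_X f\,d\mu$, on the dense subset $\{\pi_t\mid t\in T\}$ by invariance of $\mu$; weak almost periodicity (i.e.\ relative weak compactness of $Tf$ in $C(X,\mathbb{R})$, equivalently its relative compactness in the pointwise topology with continuous limits) makes this function continuous on $E(X)$, whence $\int_X f\circ u\,d\mu=\int_X f\,d\mu$ for all $f$, that is, $u_*\mu=\mu$. Since $u^{-1}(uX)=X$ this gives $\mu(uX)=1$; as $uX$ is closed and $\textrm{supp}\,\mu=X$, we get $uX=X$. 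An idempotent that is onto is the identity ($uy=u(ux)=u^2x=ux=y$ for $y=ux$), so $u=\textit{id}_X$. Consequently $\textit{id}_X\in I$, and since $I$ is a left ideal, $E(X)=E(X)\textit{id}_X\subseteq I$, so $E(X)=I$; by Lemma~\ref{lem2.14A}-(4) this is a group with neutral element $\textit{id}_X$, i.e.\ $(T,X)$ is distal (Lemma~\ref{lem2.3}) and $E(X)$ is a group of self-homeomorphisms of $X$.

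It remains to upgrade distality to equicontinuity. By Theorem~\ref{thm2.19A}, $\textsl{Equi}_\varepsilon(T,X)$ is dense open for each $\varepsilon\in\mathscr{U}_X$; fix $x_0$ in it with witness $\delta$, so that $\pi_t(\delta[x_0])\subseteq\varepsilon[\pi_tx_0]$ for all $t$. Taking limits over $E(X)=\textrm{cls}\{\pi_t\}$ upgrades this to $q(\delta[x_0])\subseteq\overline{\varepsilon}[qx_0]$ for every $q\in E(X)$. Since $(T,X)$ is minimal, $E(X)x_0=\textrm{cls}_XTx_0=X$, so every $y\in X$ equals $px_0$ for some $p\in E(X)$, which is now a homeomorphism and hence an open map; choosing $\delta'$ with $\delta'[y]\subseteq p(\delta[x_0])$ and using $\pi_tp\in E(X)$ gives $\pi_t(\delta'[y])\subseteq(\pi_tp)(\delta[x_0])\subseteq\overline{\varepsilon}[\pi_t y]$ for all $t$. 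Thus $y\in\textsl{Equi}_{\overline{\varepsilon}}(T,X)$ for every $y$, so $\textsl{Equi}_{\overline{\varepsilon}}(T,X)=X$; choosing for each $\varepsilon$ some $\varepsilon_0$ with $\overline{\varepsilon_0}\subseteq\varepsilon$ then shows $(T,X)$ is equicontinuous. Finally each $t\in T$ is onto, so $(T,X)$ is uniformly almost periodic by Theorem~\ref{thm1.6}.

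The main obstacle is the step $u_*\mu=\mu$: pointwise convergence of a net $\pi_{t_\alpha}\to u$ does not by itself pass through the integral, and it is precisely weak almost periodicity (the weak compactness of $Tf$) that guarantees $p\mapsto\int_X f\circ p\,d\mu$ is continuous on $E(X)$. The bijectivity hypothesis then enters twice: it makes the elements of $E(X)$ open maps, which is what lets the single equicontinuity point $x_0$ be spread over all of $X$, and it supplies the surjectivity needed to invoke Theorem~\ref{thm1.6}.
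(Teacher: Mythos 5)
Your proof is correct, but it follows a genuinely different route from the paper's. The paper's own proof is very short: it quotes an external result ([AD, Lemma~1.29]) asserting that, for a minimal semiflow with amenable phase semigroup and each $t\in T$ bijective, $\textsl{Equi}_\varepsilon(T,X)\not=\emptyset$ for every $\varepsilon\in\mathscr{U}_X$ already forces equicontinuity; Theorem~\ref{thm2.19A} supplies the nonemptiness and Theorem~\ref{thm1.6} finishes. You instead unpack everything: amenability is used to produce a fully supported invariant measure, which (via the continuity of $p\mapsto\int f\circ p\,d\mu$ on $E(X)$) forces the idempotent $u$ of a minimal left ideal to be surjective, hence $u=\textit{id}_X$, hence $E(X)=I$ is a group of self-homeomorphisms; you then prove by hand the ``spread one point of $\varepsilon$-equicontinuity over all of $X$'' step, using minimality ($E(X)x_0=X$) and the openness of the maps $p\in E(X)$. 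What your route buys is a self-contained argument that also yields distality and the group structure of $E(X)$ as byproducts (the paper's proof of the companion Theorem~\ref{thm2.15A} extracts the same conclusion $u=\textit{id}_X$ by a purely algebraic commutativity trick instead of a measure, and your measure argument is close in spirit to Ellis--Nerurkar's unique-ergodicity proof, Theorem~\ref{thm3.2}). The one caveat: the continuity of $p\mapsto\int_X f\circ p\,d\mu$ on $E(X)$ does not follow from pointwise convergence alone (dominated convergence fails for nets), and your justification tacitly invokes Grothendieck's theorem that relative pointwise compactness of the bounded set $Tf$ in $C(X,\mathbb{R})$ is equivalent to relative weak compactness, together with the coincidence of the weak and pointwise topologies on a weakly compact set. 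This is classical and correct, but it is machinery the paper itself never develops (its Definition~\ref{def1.5} is purely topological), so you should cite it explicitly rather than fold it into the word ``equivalently.''
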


\begin{proof}
In the context of this theorem, it is known that
\begin{itemize}
\item If $\textsl{Equi}_\varepsilon(T,X)\not=\emptyset$ for any $\varepsilon\in\mathscr{U}_X$, then $(T,X)$ is equicontinuous (cf.~\cite[Lemma~1.29]{AD}).
\end{itemize}
By Theorem~\ref{thm2.19A}, $\textsl{Equi}_\varepsilon(T,X)\not=\emptyset$ for any $\varepsilon\in\mathscr{U}_X$ so $(T,X)$ is equicontinuous. Theorem~\ref{thm1.6} follows that $(T,X)$ is an \textit{u.a.p.} semiflow.
\end{proof}

Comparing with Theorem~\ref{thm2.15A}, here ``amenable'' is weaker than ``abelian'', but we need to require previously that each $t\in T$ is a self-homeomorphism of $X$ in Theorem~\ref{thm2.31A}. However, this bijection is just a conclusion of Theorem~\ref{thm2.15A}.

%%%%%%%%%%%%%%%%%%%%%%%%%%%%%%%%%%%%%%%%%%%%%%%%%%%%%%%%%%%%%%%%%%%%%%%%%%%%%%%%%%%%%%%%
\subsection{Generalizations of Auslander's theorem: universally transitive semiflows}%%%
Using the universally transitive in place of the amenability or \textit{C}-semigroup condition, we can obtain the following sufficient and necessary condition.

\begin{thm}\label{thm2.32A}%%%
Let $T$ be a semigroup of continuous self-maps of $X$ such that $(T,X)$ is minimal and universally transitive. Then $(T,X)$ is weakly almost periodic if and only if $(T,X)$ is equicontinuous if and only if $(T,X)$ is uniformly almost periodic.
\end{thm}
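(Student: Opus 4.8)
The three implications form a cycle, and two of its arrows are already available: every uniformly almost periodic semiflow is equicontinuous and every equicontinuous semiflow is weakly almost periodic (see the remarks following Definition~\ref{def1.5} and Theorem~\ref{thm1.6}). So the whole statement reduces to showing that a minimal, universally transitive, weakly almost periodic $(T,X)$ is uniformly almost periodic. The plan is to run an Ellis-semigroup argument in the spirit of the proofs of Theorems~\ref{thm2.15A} and~\ref{thm2.16A}, with universal transitivity taking over the role that commutativity of $T$ plays there: it will let me pin down the identity inside a minimal left ideal.

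The first step is a commutation fact that does not yet use weak almost periodicity: for every $a\in\textrm{Aut}\,(T,X)$ and every $p\in E(X)$ one has $ap=pa$. Writing $p=\lim_n\pi_{t_n}$ for a net in $T$, the equalities $a\pi_{t_n}=\pi_{t_n}a$ in $X^X$ (these hold because $a(t_nx)=t_n(ax)$ by the defining property of $\textrm{Aut}\,(T,X)$) pass to the pointwise limit: right multiplication $q\mapsto q\circ a$ is continuous on $X^X$ for every $a$, and left multiplication $q\mapsto a\circ q$ is continuous because $a$ is continuous. Hence $pa=\lim_n\pi_{t_n}a=\lim_n a\pi_{t_n}=ap$.

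The heart of the argument is to identify a minimal idempotent with $\textit{id}_X$. Let $I$ be a minimal left ideal of $E(X)$ and choose an idempotent $u\in J(I)$, which exists since $E(X)$ is a compact right-topological semigroup. Any point $x_0=u(z)$ is fixed by $u$ because $u^2=u$, and for an arbitrary $x\in X$ universal transitivity supplies $a\in\textrm{Aut}\,(T,X)$ with $x=a(x_0)$; then the commutation fact gives $u(x)=u(a(x_0))=a(u(x_0))=a(x_0)=x$. Thus $u=\textit{id}_X$, so $\textit{id}_X\in I$, and since $I$ is a left ideal, $p=p\circ\textit{id}_X\in E(X)\,I\subseteq I$ for every $p\in E(X)$, whence $E(X)=I$. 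By Lemma~\ref{lem2.14A}-(4), $E(X)=\textit{id}_X\,I$ is a group with neutral element $\textit{id}_X$. I would emphasize that this step uses only minimality and universal transitivity—in particular it already yields distality via Lemma~\ref{lem2.3}—and not weak almost periodicity at all.

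Finally I would bring in the weak almost periodicity. By Lemma~\ref{lem2.6} it gives $E(X)\subset C(X,X)$, so the group $E(X)$ consists of continuous self-maps of $X$ containing $\textit{id}_X$; each element then has a continuous inverse inside $E(X)$ and is therefore a self-homeomorphism, and in particular every $\pi_t$ is a self-surjection of $X$. Ellis' theorem (Theorem~\ref{thm2.4}, applied exactly as in the proof of Theorem~\ref{thm2.15A}) then makes $T$ act equicontinuously, and Theorem~\ref{thm1.6} upgrades equicontinuity together with surjectivity of each $t$ to uniform almost periodicity. The only delicate point I anticipate is the second step: establishing the commutation $ap=pa$ correctly in $X^X$ and then using universal transitivity to spread the single fixed-point relation $u(x_0)=x_0$ to all of $X$. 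Once the idempotent is forced to be $\textit{id}_X$, everything else is a routine appeal to the algebraic Lemma~\ref{lem2.14A} and to the cited theorems.
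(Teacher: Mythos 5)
Your proposal is correct, but the central step is carried out by a genuinely different mechanism than the paper's. The paper never touches minimal left ideals: it uses minimality to show that $\mathrm{Aut}\,(T,X)$ acts \emph{freely} on $X$ (if $ax_0=x_0$ then $a$ fixes the dense orbit $Tx_0$, hence all of $X$), and then proves distality directly by a net argument --- if $\lim t_nx=z=\lim t_ny$ with $y=ax$, then $az=z$, so $a=\textit{id}_X$, contradicting $x\neq y$; Lemma~\ref{lem2.3} then converts distality into ``$E(X)$ is a group with identity $\textit{id}_X$,'' and the endgame (Lemma~\ref{lem2.6}, Theorem~\ref{thm2.4}, Theorem~\ref{thm1.6}) is the same as yours. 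You instead run the Lemma~\ref{lem2.14A} machinery: the commutation $ap=pa$ (which the paper also uses, implicitly, in its net computation) plus the automatic fixed point $u(u(z))=u(z)$ of an idempotent forces $u=\textit{id}_X$, whence $E(X)=I$ is a group. Your route is closer in spirit to the proofs of Theorems~\ref{thm2.15A} and~\ref{thm2.16A}, with universal transitivity standing in for commutativity of $T$; its notable dividend is that minimality is never invoked (the paper needs it for the freeness of the $\mathrm{Aut}$-action), so you have in effect proved the slightly stronger statement that universal transitivity alone, with $T\subset C(X,X)$, makes \textit{w.a.p.} equivalent to \textit{u.a.p.} The paper's argument is shorter and avoids the ideal theory, but is tied to the minimality hypothesis. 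All the individual steps you give --- the continuity of left and right translations in $X^X$, the identification $E(X)=I$, the bijectivity of group elements of $E(X)$ yielding surjectivity of each $\pi_t$ --- check out.
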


\begin{proof}
Since $(T,X)$ is a universally transitive semiflow, then it is homogeneous and so each $t\in T$ is a surjection of $X$ (cf.~\cite[Proposition~2.6]{AD}). Thus $(T,X)$ is equicontinuous if and only if $(T,X)$ is \textit{u.a.p.} by Theorem~\ref{thm1.6}.
We only need prove the necessity. For this, assume $(T,X)$ is \textit{w.a.p.} and we will show that $(T,X)$ is distal.

Indeed, we first note that $\mathrm{Aut}\,(T,X)$ acts freely on $X$ (i.e. if $a\in\mathrm{Aut}\,(T,X)$ and $a\not=\textit{id}_X$, then $a(x)\not=x\ \forall x\in X$. In fact, let $ax_0=x_0$ for some $x_0\in X$; then $atx_0=tx_0$ for all $t\in T$ so $ax=x$ for all $x\in X$ for $\textrm{cls}_XTx_0=X$). Now for any $x,y\in X, x\not=y$, one can find some $a\in\mathrm{Aut}\,(T,X)$ with $y=ax$. If a net $\{t_n\}$ in $T$ is such that $\lim_nt_nx=z=\lim_nt_ny$, then $az=z$ whence $a=\textit{id}_X$. This contradicts $x\not=y$. Thus $T$ acts distally on $X$. Consequently, $E(T,X)$ is a group of continuous self-maps of $X$ by Lemma~\ref{lem2.3} and so $(T,X)$ is equicontinuous by Theorem~\ref{thm2.4}.
\end{proof}

We notice here that Theorem~\ref{thm2.32A} is comparable with the following known result for which the phase space is metrizable.

\begin{lem}[{\cite[Theorem~1.31]{AD}}]\label{lem2.33A}%%%
If $(T,X)$ is a universally transitive semiflow on a compact metric space, then it is uniformly almost periodic.
\end{lem}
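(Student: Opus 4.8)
The plan is to verify the two hypotheses of Theorem~\ref{thm1.6} directly, namely that $(T,X)$ is equicontinuous and that each $t\in T$ is a self-surjection of $X$. Fix a compatible metric $d$ on the compact metric space $X$ and introduce the $T$-invariant pseudo-metric $D(x,y)=\sup_{t\in T}d(tx,ty)$, which is finite since $d$ is bounded. Recall that $(T,X)$ is equicontinuous if and only if $\textsl{Equi}\,(T,X)=X$, and that $x\in\textsl{Equi}_\varepsilon(T,X)$ precisely when there is a $\delta>0$ with $D(x,y)\le\varepsilon$ whenever $d(x,y)<\delta$. Thus it suffices to show that, for each $\varepsilon>0$ and each $x\in X$, all sufficiently small $d$-balls about $x$ have small $D$-diameter.

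The engine for this is the homogeneity carried by $G:=\textrm{Aut}\,(T,X)$. I would equip $\textrm{Homeo}(X)$ with the usual two-sided uniform metric $\rho(a,b)=\sup_{z}d(az,bz)+\sup_{z}d(a^{-1}z,b^{-1}z)$, making it a Polish group. Since each condition $a\pi_t=\pi_t a$ is $\rho$-closed (the maps $a\mapsto a\pi_t$ and $a\mapsto\pi_t a$ are $\rho$-continuous, using uniform continuity of $\pi_t$ on the compact space $X$), the subgroup $G=\bigcap_{t\in T}\{a:a\pi_t=\pi_t a\}$ is closed, hence itself Polish. The evaluation $G\times X\to X$ is jointly continuous, and universal transitivity (Definition~\ref{def2.7}) says exactly that this action is transitive. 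I would then invoke Effros' theorem: a continuous transitive action of a Polish group on a nonempty Polish space is micro-transitive, i.e.\ for every neighbourhood $W$ of $\textit{id}_X$ in $G$ and every $x\in X$, the set $Wx$ is a neighbourhood of $x$.

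The decisive step converts micro-transitivity into equicontinuity via the commuting relation $at=ta$. Given $\varepsilon>0$, put $W=\{a\in G:\rho(a,\textit{id}_X)<\varepsilon\}$. For $a\in W$ and any $x\in X$, since $a$ commutes with every $t\in T$,
\[
D(x,ax)=\sup_{t\in T}d(tx,t(ax))=\sup_{t\in T}d\bigl(tx,a(tx)\bigr)\le\sup_{z\in X}d(z,az)\le\rho(a,\textit{id}_X)<\varepsilon .
\]
By micro-transitivity there is a $\delta>0$ with $Wx\supseteq\{y:d(x,y)<\delta\}$, so every $y$ with $d(x,y)<\delta$ is of the form $ax$ with $a\in W$, whence $D(x,y)<\varepsilon$. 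Therefore $x\in\textsl{Equi}_\varepsilon(T,X)$; as $x$ and $\varepsilon$ were arbitrary, $\textsl{Equi}\,(T,X)=X$ and $(T,X)$ is equicontinuous.

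It remains to check that each $t\in T$ is surjective, after which Theorem~\ref{thm1.6} completes the proof. This too is immediate from transitivity: fix any $x_0\in X$, and given $y\in X$ choose $a\in G$ with $a(tx_0)=y$; then $t(ax_0)=a(tx_0)=y$, so $y\in tX$. I expect the main obstacle to be the middle paragraph, namely confirming that $G$ really is a Polish group acting continuously and transitively so that Effros' theorem applies, and in particular that the uniform topology makes $G$ a closed (hence Polish) subgroup of $\textrm{Homeo}(X)$. This is also exactly where the compact-metric hypothesis is used, to secure Polishness and thus micro-transitivity; the remaining deductions are short.
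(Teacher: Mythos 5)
Your proof is correct, but note that the paper itself offers no proof of Lemma~\ref{lem2.33A}: the statement is imported verbatim from \cite[Theorem~1.31]{AD}, so there is no in-paper argument to compare yours against, only the external source. Your route is the classical one for results of this type: reduce to Theorem~\ref{thm1.6}, obtain surjectivity of each $\pi_t$ from transitivity of $G=\mathrm{Aut}\,(T,X)$ together with the commuting relation (the computation $t(ax_0)=a(tx_0)=y$ is clean), and obtain equicontinuity from Effros' micro-transitivity theorem applied to the Polish group $G$ acting transitively on the Polish space $X$. The individual steps check out: $G$ is a closed (hence Polish) subgroup of $\mathrm{Homeo}(X)$ in the two-sided uniform metric, the evaluation action is jointly continuous for that topology, a compact metric space is Polish and non-meager in itself so Effros applies, and the estimate $D(x,ax)\le\rho(a,\textit{id}_X)$ is exactly where $at=ta$ enters. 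Two small remarks. First, you conclude pointwise equicontinuity at every point ($\textsl{Equi}\,(T,X)=X$) and then need the uniform Definition~\ref{def1.4}; this requires the standard Lebesgue-number argument on the compact space, but the paper explicitly asserts this equivalence, so you may lean on it. Second, the compact-metric hypothesis is indispensable exactly where you locate it (Polishness, hence Baire category, for Effros), and the paper's remark that the conclusion fails for Ellis' non-metrizable two-circle minimal flow confirms that no softer argument avoiding this machinery can work. As far as I can tell the proof in \cite{AD} is likewise Effros-based, so your argument is essentially the intended one rather than a genuinely different decomposition.
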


Of course, the consequence of Lemma~\ref{lem2.33A} is false when $X$ is non-metric in view of Ellis' two circle minimal system~\cite{E69}. In addition, we will consider the converse of this lemma in Theorem~\ref{thm2.36} in $\S\ref{sec2.12}$.

\begin{thm}\label{thm2.34A}%%%
Let $(T,X)$ be a universally transitive semiflow such that $\textrm{Aut}\,(T,X)$ is equicontinuous, where $X$ is a compact, first countable, T$_2$-space. Then $(T,X)$ is weakly almost periodic if and only if it is uniformly almost periodic.
\end{thm}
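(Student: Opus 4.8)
The plan is to dispose of the trivial implication first and then concentrate on the substantive one. If $(T,X)$ is \textit{u.a.p.}, then it is equicontinuous and hence \textit{w.a.p.}, as recorded after Theorem~\ref{thm1.6}; so only the forward implication needs work. Assuming $(T,X)$ is \textit{w.a.p.}, I would aim to prove that $(T,X)$ is equicontinuous, i.e. that $\textsl{Equi}\,(T,X)=X$, and deal with the passage from equicontinuity to \textit{u.a.p.} only at the very end.

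The core of the argument will be a transport principle: equicontinuity at a single point can be spread over all of $X$ by the automorphism group, precisely because each $a\in\mathrm{Aut}\,(T,X)$ commutes with the $T$-action. Fix $\varepsilon\in\mathscr{U}_X$. Using the equicontinuity of $\mathrm{Aut}\,(T,X)$, I would choose $\eta\in\mathscr{U}_X$ with $(p,q)\in\eta\Rightarrow(ap,aq)\in\varepsilon$ for every $a\in\mathrm{Aut}\,(T,X)$. By Theorem~\ref{thm2.19A}, the set $\textsl{Equi}_\eta(T,X)$ is dense open, hence nonempty; fix $x_0\in\textsl{Equi}_\eta(T,X)$ with witness $\delta$, so $t(\delta[x_0])\subseteq\eta[tx_0]$ for all $t\in T$. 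The claim to establish is that $ax_0\in\textsl{Equi}_\varepsilon(T,X)$ for every $a\in\mathrm{Aut}\,(T,X)$. Since $a$ is a self-homeomorphism of the compact space $X$, $a^{-1}$ is uniformly continuous, so there is $\delta'\in\mathscr{U}_X$ with $a^{-1}(\delta'[ax_0])\subseteq\delta[x_0]$. Given $z\in\delta'[ax_0]$, put $w=a^{-1}z\in\delta[x_0]$; then $(tx_0,tw)\in\eta$ for every $t$, and because $at=ta$ one has $t(ax_0)=a(tx_0)$ and $tz=a(tw)$, so $(t(ax_0),tz)=(a(tx_0),a(tw))\in\varepsilon$ by the choice of $\eta$. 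Thus $t(\delta'[ax_0])\subseteq\varepsilon[t(ax_0)]$ for all $t$, i.e. $ax_0\in\textsl{Equi}_\varepsilon(T,X)$.

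By universal transitivity, $\mathrm{Aut}\,(T,X)x_0=X$, so the claim gives $\textsl{Equi}_\varepsilon(T,X)=X$. As $\varepsilon$ was arbitrary, $\textsl{Equi}\,(T,X)=\bigcap_{\varepsilon}\textsl{Equi}_\varepsilon(T,X)=X$, and $(T,X)$ is equicontinuous. Finally, universal transitivity makes $(T,X)$ homogeneous, so each $t\in T$ is a surjection of $X$ (as used already in the proof of Theorem~\ref{thm2.32A}, via \cite[Proposition~2.6]{AD}); Theorem~\ref{thm1.6} then upgrades equicontinuity to uniform almost periodicity, finishing the proof.

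I expect the transport step to be the main obstacle, and specifically the requirement that the $\varepsilon$-closeness hold uniformly in $t\in T$: this is exactly where the commutation $at=ta$ is indispensable, as it rewrites the orbit comparison $(t(ax_0),tz)$ as $(a(tx_0),a(tw))$, to which the single entourage $\eta$ supplied by the equicontinuity of $\mathrm{Aut}\,(T,X)$ applies simultaneously for all $t$. I would note that this route rests on Theorem~\ref{thm2.19A} rather than on the sequential genericity Theorem~\ref{thm2.20A}, so it appears not to consume the first-countability hypothesis; presumably that hypothesis is tailored to an alternative argument passing through Theorem~\ref{thm2.20A}.
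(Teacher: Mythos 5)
Your proof is correct, but it takes a genuinely different route from the paper's. The paper fixes a net $\{t_n\}$ in $T$, uses first countability at a point $x_0$ to extract a sequence $\{t_i\}$ converging pointwise to some $\varphi\in C(X,X)$, invokes the Baire-type genericity Theorem~\ref{thm2.20A} to find one point near which the convergence is eventually $\delta$-uniform, transports that local uniformity to every point via an automorphism (exactly the commutation $t(ax)=a(tx)$ plus the equicontinuity of $\mathrm{Aut}\,(T,X)$ that you isolate), and concludes equicontinuity by Ascoli--Arzel\`a. You instead apply Theorem~\ref{thm2.19A} to get a nonempty $\textsl{Equi}_\eta(T,X)$ and transport membership in $\textsl{Equi}_\varepsilon(T,X)$ pointwise by the same commutation trick, obtaining $\textsl{Equi}_\varepsilon(T,X)=X$ for every $\varepsilon$ directly; the final passage to \textit{u.a.p.} via homogeneity, surjectivity of each $t$, and Theorem~\ref{thm1.6} is common to both. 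The transport computation itself is sound: uniform continuity of $a^{-1}$ on the compact space supplies the entourage $\delta'$, and $(t(ax_0),tz)=(a(tx_0),a(tw))\in\varepsilon$ holds uniformly in $t$ precisely because the single $\eta$ works for all $a$ simultaneously. What your version buys is real: it never uses sequences or Theorem~\ref{thm2.20A}, so the first-countability hypothesis is indeed not consumed, and your argument proves the theorem for an arbitrary compact T$_2$ phase space --- a strictly stronger statement than the one in the paper. The only mild caveat is that you conclude equicontinuity from $\textsl{Equi}\,(T,X)=X$, i.e.\ from pointwise equicontinuity at every point with point-dependent $\delta'$; this is the equivalence the paper asserts right after defining $\textsl{Equi}\,(T,X)$ (a standard compactness argument), so it is legitimate to cite, but worth flagging as the one place where compactness is silently doing work.
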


\begin{proof}
By Theorem~\ref{thm1.6}, the sufficiency is obvious. Conversely, let $E(T,X)\subseteq C_\mathfrak{p}(X,X)$ by Lemma~\ref{lem2.6}; that is, $E(T,X)$ be a compact subspace of $C_\mathfrak{p}(X,X)$. Let $x_0\in X$ and then there is a countable neighborhood base at $x_0$.

Then given any net $\{t_n\}$ in $T$, we can find a subsequence $\{t_i\}_{i=1}^\infty$ from this net with $t_ix_0\to\varphi(x_0)$ for some $\varphi\in C(X,X)$. Further by $\mathrm{Aut}(T,X)x_0=X$, it follows that $t_i\to\varphi$ pointwise on $X$. Next, we shall proceed to prove that $t_i\to\varphi$ uniformly on $X$.

First let $\varepsilon\in\mathscr{U}_X$ be any given. Since $\mathrm{Aut}(T,X)$ is equicontinuous on $X$ by hypothesis, there exists $\delta\in\mathscr{U}_X$ such that $a[\delta]\subseteq\varepsilon$ for each $a\in\mathrm{Aut}(T,X)$. Moreover, by Theorem~\ref{thm2.20A}, there is a point $y\in X$ such that $t_i\to\varphi$ $\delta$-uniformly at the point $y$; that is, one can find a neighborhood $V_y$ of $y$ and an integer $i_0>0$ such that $(t_ix, t_iz)\in\delta$ for all $i\ge i_0$ and $x,z\in V_y$.

Let $x\in X$ be any given and let $a\in\mathrm{Aut}(T,X)$ with $ay=x$. Let $U=a[V_y]$ which is a neighborhood of $x$. Then, for any $z\in U$ and $i>i_0$,  $(t_ix,t_iz)\in\varepsilon$. Therefore, $t_i\to\varphi$ uniformly at every point $x\in X$ and then on $X$.
Thus $(T,X)$ is equicontinuous by the Ascoli-Arzel\`{a} theorem. Further by Theorem~\ref{thm1.6}, $(T,X)$ is uniformly almost periodic.
\end{proof}

\subsection{A note of a theorem of Gottschalk}\label{sec2.12}%%%
We will now generalize the following theorem of Gottschalk.

\begin{thm}[{\cite[Theorem~7]{G}}]
Let $(G,X)$ be a point transitive flow on a compact metric space with $G$ an abelian group; then the following statements are pairwise equivalent:
\begin{enumerate}
\item[$(1)$] $(G,X)$ is universally transitive.
\item[$(2)$] $\mathrm{Aut}\,(G,X)$ is equicontinuous on $X$.
\item[$(3)$] $(G,X)$ is equicontinuous.
\end{enumerate}
\end{thm}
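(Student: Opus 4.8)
The plan is to prove the cyclic chain $(3)\Rightarrow(2)\Rightarrow(1)\Rightarrow(3)$, which yields the pairwise equivalence, and to organise the whole argument around the Ellis semigroup $E(X)=E(G,X)$, exploiting at every turn that commutativity of $G$ makes $\{\pi_g:g\in G\}$ an abelian subsemigroup of $E(X)$. The implication $(1)\Rightarrow(3)$ is essentially free: by Lemma~\ref{lem2.33A} a universally transitive semiflow on a compact metric space is uniformly almost periodic, hence in particular equicontinuous (this step uses neither point transitivity nor commutativity). So the genuine content lies in $(3)\Rightarrow(2)$ and $(2)\Rightarrow(1)$.

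For $(3)\Rightarrow(2)$ I would first extract the structural consequences of equicontinuity. Since $G$ is a group, every $\pi_g$ is a self-surjection, so Theorem~\ref{thm2.5} makes $(G,X)$ distal and Lemma~\ref{lem2.3} makes $E(X)$ a group of continuous self-maps with unit $\textit{id}_X$; equicontinuity renders composition jointly continuous, so $E(X)$ is a compact \emph{topological} group and the closure of the abelian set $\{\pi_g\}$ is itself abelian. Because the evaluation $p\mapsto p(y)$ is continuous and $E(X)$ is compact, one has $E(X)y=\textrm{cls}_XGy$ for every $y$; applying this to a transitive point $x_0$ gives $E(X)x_0=X$, and then $E(X)y=E(X)x_0=X$ for all $y$ (using that $E(X)$ is a group), so $(G,X)$ is minimal with $E(X)$ acting transitively. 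Since $E(X)$ is abelian and consists of homeomorphisms commuting with each $\pi_g$, we get $E(X)\subseteq\textrm{Aut}\,(G,X)$. Conversely, for $a\in\textrm{Aut}\,(G,X)$ transitivity yields $p\in E(X)$ with $a(x_0)=p(x_0)$; as $a$ and $p$ are continuous, both commute with $G$, and agree on the dense orbit $Gx_0$, they agree everywhere, so $a=p\in E(X)$. Hence $\textrm{Aut}\,(G,X)=E(X)$, which, being the closure of the equicontinuous family $\{\pi_g\}$, is equicontinuous; this is $(2)$.

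For $(2)\Rightarrow(1)$ I would begin by noting that, $G$ being abelian, each homeomorphism $\pi_g$ commutes with every $\pi_h$, so $\pi_g\in\textrm{Aut}\,(G,X)$; therefore $\textrm{Aut}\,(G,X)x_0\supseteq Gx_0$ is dense for a transitive point $x_0$. Since $\textrm{Aut}\,(G,X)$ is equicontinuous by hypothesis, viewing it as the phase group of the flow it generates and applying Theorem~\ref{thm2.4} (or directly Ascoli) shows its closure in $X^X$ is a compact group of homeomorphisms; this closure still commutes with $G$, hence lies in $\textrm{Aut}\,(G,X)$, so $\textrm{Aut}\,(G,X)$ is itself compact. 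A compact group of maps has closed orbits, so $\textrm{Aut}\,(G,X)x_0$ is closed and dense, i.e.\ equals $X$, which is universal transitivity.

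I expect the hardest part to be $(3)\Rightarrow(2)$, and inside it the identity $\textrm{Aut}\,(G,X)=E(X)$: the inclusion $\supseteq$ rests on commutativity forcing $E(X)$ into the centraliser of $G$, while $\subseteq$ requires transitivity of the $E(X)$-action (equivalently, minimality) together with density of a single orbit to recover an automorphism from its value at one point. The technical linchpin underneath is verifying that $E(X)$ is a genuine compact topological group, which is what guarantees that the closure of the abelian set $\{\pi_g\}$ remains abelian.
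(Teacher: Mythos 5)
Your proof is correct, and it is worth noting that the paper never proves Gottschalk's theorem directly: it is recorded as a citation and then subsumed in Theorem~\ref{thm2.36}, whose proof (specialised to $T=G$ a group, where the surjectivity clauses in conditions (2)--(3) become automatic) runs $(1)\Rightarrow(4)\Rightarrow(3)\Rightarrow(2)\Rightarrow(1)$. Your implications $(1)\Rightarrow(3)$ and $(2)\Rightarrow(1)$ match the paper's in substance: both rest on Lemma~\ref{lem2.33A} for the first, and for the second on the fact that a compact group of homeomorphisms commuting with $G$ lies in $\mathrm{Aut}(G,X)$ and has a closed, hence full, orbit through a transitive point. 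Where you genuinely diverge is $(3)\Rightarrow(2)$. The paper shows $E(G,X)\subseteq\mathrm{Aut}(H,X)$ for the auxiliary flow $(H,X)$ with $H=\mathrm{Aut}(G,X)$, deduces that $(H,X)$ is universally transitive because $E(G,X)x_0=X$, and then invokes Lemma~\ref{lem2.33A} a \emph{second} time to conclude that $H$ is equicontinuous. You instead establish the exact identity $\mathrm{Aut}(G,X)=E(G,X)$ --- the inclusion $\supseteq$ from commutativity forcing $E(G,X)$ into the centraliser of $G$, the inclusion $\subseteq$ from $E(G,X)x_0=X$ together with continuity and agreement on the dense orbit $Gx_0$ --- and read off equicontinuity of $\mathrm{Aut}(G,X)$ directly, since the pointwise closure of an equicontinuous family is equicontinuous. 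Your route is more elementary and self-contained, avoiding the second appeal to the nontrivial Lemma~\ref{lem2.33A} and yielding the sharper structural fact $\mathrm{Aut}(G,X)=E(G,X)$ as a by-product; the price is that your $(3)\Rightarrow(2)$ uses commutativity of $G$ in an essential way, whereas the paper's proof of that implication works for arbitrary phase semigroups, a point the paper explicitly records in the Note following Theorem~\ref{thm2.36}.
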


\begin{thm}\label{thm2.36}%%%
Let $(T,X)$ be a point transitive semiflow on a compact metric space with $T$ an abelian semigroup; then the following statements are pairwise equivalent:
\begin{enumerate}
\item[$(1)$] $(T,X)$ is universally transitive.
\item[$(2)$] $\mathrm{Aut}\,(T,X)$ is equicontinuous on $X$ such that each $t\in T$ is a self-homeomorphism of $X$.
\item[$(3)$] $(T,X)$ is equicontinuous and each $t\in T$ is a self-surjection of $X$.
\item[$(4)$] $(T,X)$ is uniformly almost periodic.
\end{enumerate}

\begin{note}
Condition $(1)\Rightarrow(4)\Rightarrow(3)\Rightarrow(2)$ in our proof does not need the hypothesis that $T$ is abelian, but $(2)\Rightarrow(1)$ needs this.
\end{note}
\end{thm}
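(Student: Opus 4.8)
The plan is to establish the cyclic chain of implications $(1)\Rightarrow(4)\Rightarrow(3)\Rightarrow(2)\Rightarrow(1)$, arranged so that only the last arrow invokes commutativity of $T$. The first two arrows are immediate from results already in hand: since $X$ is a compact metric space, $(1)$ gives $(4)$ at once by Lemma~\ref{lem2.33A}, and $(4)\Rightarrow(3)$ is precisely the content of Theorem~\ref{thm1.6}, which characterizes \textit{u.a.p.} as equicontinuity together with surjectivity of every $t\in T$.

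For $(3)\Rightarrow(2)$ I would first upgrade ``surjection'' to ``homeomorphism''. Assuming $(T,X)$ equicontinuous with each $t$ a self-surjection, Theorem~\ref{thm2.5} makes $(T,X)$ distal, so by Lemma~\ref{lem2.3} the Ellis semigroup $E(X)$ is a group with unit $\mathit{id}_X$; hence every $t\in T\subseteq E(X)$ is injective, and being a continuous surjection of a compact Hausdorff space it is a self-homeomorphism. Distality also forces every orbit closure to be minimal (given $E(X)$ a group, any minimal subset of $\mathrm{cls}_XTx$ must contain $x$), so the presence of a transitive point makes $(T,X)$ minimal. At this stage $E(X)$ is a compact, metrizable, topological group of homeomorphisms acting transitively on $X$, and $X$ is identified with the homogeneous space $E(X)/H$, where $H$ is the stabilizer of a base point $x_0$. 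Because each $a\in\mathrm{Aut}(T,X)$ commutes with $T$, hence with $E(X)$, it is the descent of a right translation $p\mapsto pq$ by some $q\in E(X)$ normalizing $H$; this identifies $\mathrm{Aut}(T,X)$ with right translations by the normalizer of $H$.

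The crux of $(3)\Rightarrow(2)$ is then to see that these right translations are equicontinuous. For this I would fix a bi-invariant compatible metric $D$ on the compact metrizable group $E(X)$ and let it descend to a metric $\widehat{D}$ on $X=E(X)/H$; a routine check using bi-invariance of $D$ and $q\in N(H)$ shows that every right translation by $N(H)$ is a $\widehat{D}$-isometry. Since $\widehat{D}$ induces the topology of $X$, this makes $\mathrm{Aut}(T,X)$ an equicontinuous family, giving $(2)$. I expect this equicontinuity-of-$\mathrm{Aut}$ step to be the main obstacle, precisely because it must be carried out without assuming $T$ (equivalently $E(X)$) abelian; the bi-invariant metric is what replaces the easy abelian argument.

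Finally, for $(2)\Rightarrow(1)$ I would use commutativity. Since $T$ is abelian and, by $(2)$, each $t$ is a self-homeomorphism, every $t$ commutes with all of $T$ and so lies in $\mathrm{Aut}(T,X)$; thus $T\subseteq\mathrm{Aut}(T,X)$. The hypothesis that $\mathrm{Aut}(T,X)$ is equicontinuous implies that its pointwise closure $G$ in $X^X$ is a compact group of self-homeomorphisms of $X$, each element of which still commutes with $T$ and hence lies in $\mathrm{Aut}(T,X)$; therefore $G=\mathrm{Aut}(T,X)$ is already compact. Consequently, for a transitive point $x_0$ the set $\mathrm{Aut}(T,X)x_0$ is compact, hence closed, and it contains the dense orbit $Tx_0$, so $\mathrm{Aut}(T,X)x_0=X$. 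This is exactly universal transitivity, completing the cycle.
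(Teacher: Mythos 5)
Your chain $(1)\Rightarrow(4)\Rightarrow(3)\Rightarrow(2)\Rightarrow(1)$ is the same as the paper's, and the first two arrows coincide with it exactly (Lemma~\ref{lem2.33A}, then Theorem~\ref{thm1.6}). The divergence is in $(3)\Rightarrow(2)$, where your argument is genuinely different and, as far as I can check, correct. The paper's trick is to observe that, once $E(T,X)$ is known to be a compact group of homeomorphisms acting transitively (via Theorem~\ref{thm2.5} and Lemma~\ref{lem2.3}, as in your first step), every $p\in E(T,X)$ commutes with every $a\in\mathrm{Aut}(T,X)$, so $E(T,X)\subseteq\mathrm{Aut}(G,X)$ for the flow $(G,X)$ with $G=\mathrm{Aut}(T,X)$; since $E(T,X)x=\mathrm{cls}_XTx=X$, that flow is itself universally transitive, and a second application of Lemma~\ref{lem2.33A} (to $(G,X)$ rather than $(T,X)$) yields equicontinuity of $\mathrm{Aut}(T,X)$ for free. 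You instead prove this from scratch: identify $X$ with the homogeneous space $E(X)/H$, recognize $\mathrm{Aut}(T,X)$ as the right translations by the normalizer $N(H)$, and make them isometries of the quotient of a bi-invariant metric. Your route costs more (you must justify metrizability of $E(X)$, the existence of the bi-invariant metric, that the quotient metric induces the right topology, and that $q^{-1}Hq=H$), but it buys an explicit structural description $\mathrm{Aut}(T,X)\cong N(H)/H$ that the paper's argument does not provide; the paper's route is shorter precisely because it recycles Lemma~\ref{lem2.33A}, which already hides the metric-space work. For $(2)\Rightarrow(1)$ the two arguments are essentially the same modulo packaging: the paper notes $E(T,X)\subseteq\mathrm{Aut}(T,X)$ and uses $E(T,X)x=\mathrm{cls}_XTx=X$ directly, whereas you take the closure of $\mathrm{Aut}(T,X)$ and argue via compactness of the image of $x_0$; both reduce to the fact that an equicontinuous family of homeomorphisms commuting with $T$ has closed orbits containing the dense orbit $Tx_0$. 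One small point worth making explicit in your write-up of $(2)\Rightarrow(1)$: to conclude that limits of automorphisms are again homeomorphisms you are implicitly invoking Theorem~\ref{thm2.4} for the flow $(\mathrm{Aut}(T,X),X)$; that is legitimate but should be said.
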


\begin{proof}
Condition $(1)\Rightarrow(4)$: This follows immediately from Lemma~\ref{lem2.33A}.

Condition $(4)\Rightarrow(3)$: This is exactly Theorem~\ref{thm1.6}.

Condition $(3)\Rightarrow(2)$: By Theorem~\ref{thm2.5}, each $t\in T$ is a self-homeomorphism of $X$. Then $E(T,X)$ is a compact topological group of homeomorphisms of $X$ by Lemma~\ref{lem2.3} and condition (3). It is easy to see that $ap=pa$ for all $a\in\mathrm{Aut}\,(T,X)$ and $p\in E(T,X)$. Thus $E(T,X)\subseteq\mathrm{Aut}\,(G,X)$, where $G=\mathrm{Aut}\,(T,X)$ and we regard $(G,X)$ as a flow on $X$ with phase group $G$. Since $(T,X)$ is transitive, thus $E(T,X)x=\textrm{cls}_XTx=X$ for some and then all $x\in X$. Then $(G,X)$ is a universally transitive flow by Definition~\ref{def2.7}.2. So by Lemma~\ref{lem2.33A}, it follows that $\mathrm{Aut}\,(T,X)$ is equicontinuous acting on $X$. Hence (2) holds.

Condition $(2)\Rightarrow(1)$: Since $T$ is abelian, then $E(T,X)\subset\mathrm{Aut}\,(T,X)$ so $(T,X)$ is a universally transitive semiflow.

The proof of Theorem~\ref{thm2.36} is therefore completed.
\end{proof}
%%%%%%%%%%%%%%%%%%%%%%%%%%%%%%%%%%%%%%%%%%%%%%%%
%%%%%%%%%%%%%%%%%%%%%%%%%%%%%%%%%%%%%%%%%%%%%%%%
\section{Unique ergodicity of some semiflows}%%%
\begin{defn}
Let $(T,X)$ be a semiflow on $X$; then a Borel probability measure $\mu$ on $X$ is called \textit{invariant} if $\mu(B)=\mu(t^{-1}B)$ for all Borel subsets $B$ of $X$. If there is a unique such measure, then we say $(T,X)$ is \textit{uniquely ergodic}.

It is a well-known result that any semiflow with amenable phase semigroup admits invariant Borel probability measures. The proof of the above Theorem~\ref{thm2.31A} needs essentially this important property of amenable semigroups.
\end{defn}

In \cite{EN}, by using the closed invariant equivalent proximal relation
\begin{equation*}
P(T,X)=\{(x,y)\in X\times X\,|\,\exists p\in E(X)\textrm{ s.t. }p(x)=p(y)\},
\end{equation*}
Ellis and Nerurkar proved the following unique ergodicity, which will be generalized in a little later.

\begin{thm}[{cf.~\cite[Proposition~II.10]{EN}}]\label{thm3.2}%%%
If $(T,X)$ is a weakly almost periodic flow with a unique minimal set, then it is uniquely ergodic.
\end{thm}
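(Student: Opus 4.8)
The plan is to exploit the enveloping semigroup together with the Haar measure of a minimal left ideal, in the spirit of Ellis--Nerurkar but organised around the uniqueness-forcing consequence of having a single minimal set. By Lemma~\ref{lem2.6} the hypothesis gives $E(X)\subset C(X,X)$, so $E(X)$ is in fact \emph{semitopological}: if $p_n\to p$ then both $pp_n\to pp$ and $p_np\to pp$ follow from continuity of the factors. Fix a minimal left ideal $I$ in $E(X)$. I would first argue that $I$ is a compact topological group. Indeed, the argument in claim (iv) of the proof of Theorem~\ref{thm2.16A} (which uses only that $(T,X)$ is \emph{w.a.p.}, via Theorem~\ref{thm2.10A}, together with Lemma~\ref{lem2.14A}-(2), and \emph{not} the density of almost periodic points) shows that $I$ carries a unique idempotent $u$; then Lemma~\ref{lem2.14A}-(7) gives $I=uI$, which is a group with neutral element $u$ by Lemma~\ref{lem2.14A}-(4). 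Being a compact semitopological group, $I$ is then a genuine topological group, and so carries a bi-invariant Haar probability measure $m$.

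Next I would record the two facts that make the single minimal set decisive. First, for \emph{every} $x\in X$ the set $Ix$ is a minimal subset of $X$: it is compact (as $p\mapsto px$ is continuous and $I$ is compact), it is $T$-invariant (since $tI\subseteq I$), and if $\emptyset\neq C\subseteq Ix$ is closed invariant with $c=p_0x\in C$, then $\textrm{cls}_XTc=E(X)c\supseteq Ip_0x=Ix$ by Lemma~\ref{lem2.14A}-(1), forcing $C=Ix$. Since $(T,X)$ has a \emph{unique} minimal set $M$, this yields $Ix=M$ for all $x\in X$. Second, any $T$-invariant Borel probability measure $\nu$ is automatically $E(X)$-invariant, i.e. $p_*\nu=\nu$ for all $p\in E(X)$: writing $p=\lim t_\alpha$ with $t_\alpha\in T$ we have $f_{t_\alpha}\to f\circ p$ pointwise for each $f\in C(X,\mathbb{R})$, and passing this convergence under $\int\!\cdot\,d\nu$, together with $\int f_{t_\alpha}\,d\nu=\int f\,d\nu$, gives $\int f\,d(p_*\nu)=\int f\,d\nu$.

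With these in hand I would average over the group $I$. For $f\in C(X,\mathbb{R})$ put $\bar f(x)=\int_I f(px)\,dm(p)$. Continuity of $\bar f$ follows from dominated convergence and the continuity of each $p$; right-invariance of $m$ gives $\bar f(qx)=\bar f(x)$ for every $q\in I$, so $\bar f$ is constant on each orbit $Ix$. As $M=Ix$ for every $x$, this makes $\bar f$ constant on $M$, and for arbitrary $x$ the choice $q=u$ gives $\bar f(x)=\bar f(ux)$ with $ux\in Ix=M$; hence $\bar f\equiv c_f$ is globally constant, with $c_f$ depending only on $f$ and $m$. Now for any $T$-invariant $\nu$, Fubini together with the $E(X)$-invariance gives
\begin{gather*}
c_f=\int_X \bar f\,d\nu=\int_I\Big(\int_X f(px)\,d\nu(x)\Big)dm(p)=\int_I\Big(\int_X f\,d(p_*\nu)\Big)dm(p)=\int_X f\,d\nu .
\end{gather*}
Thus $\int_X f\,d\nu=c_f$ for \emph{every} invariant $\nu$ and every $f$, so the invariant measure is unique; existence is witnessed by the pushforward of $m$ under $p\mapsto px_0$ (any fixed $x_0$), which is $T$-invariant by left-invariance of Haar.

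The main obstacle is the measure-theoretic limiting in the two places where a net convergence is carried inside an integral: the $E(X)$-invariance of $\nu$ and the Fubini step. Pointwise convergence of a \emph{net} does not in general permit passage to the limit under the integral, so this needs the analytic content of weak almost periodicity. By Grothendieck's theorem the uniformly bounded, pointwise relatively compact family $Tf$ is relatively weakly compact in $C(X,\mathbb{R})$, and on its weakly compact closure the topology of pointwise convergence coincides with the weak topology $\sigma(C(X,\mathbb{R}),C(X,\mathbb{R})^*)$; since $\nu\in C(X,\mathbb{R})^*$, the required convergence $\int f_{t_\alpha}\,d\nu\to\int (f\circ p)\,d\nu$ follows, and the same circle of ideas supplies the joint measurability of $(p,x)\mapsto f(px)$ needed for Fubini. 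Checking these points carefully (and confirming that $I$ is a topological group in the possibly non-metrizable setting) is where the real work lies; everything else is bookkeeping with Lemma~\ref{lem2.14A}.
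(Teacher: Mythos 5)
Your argument is essentially correct, but note first that the paper does not actually prove Theorem~\ref{thm3.2}: it is quoted from Ellis--Nerurkar \cite[Proposition~II.10]{EN}, and the paper only remarks that their proof proceeds via the closed invariant equivalence relation $P(T,X)=\{(x,y)\,|\,\exists p\in E(X)\textrm{ s.t. }p(x)=p(y)\}$. So there is no internal proof to compare against; what you have written is an independent reconstruction in the same Ellis--Nerurkar circle of ideas, organised around the Haar measure of a minimal left ideal $I$ rather than around the quotient by $P(T,X)$. The skeleton is sound: $(T,I)$ is a minimal \textit{w.a.p.} flow, so claim (iv) in the proof of Theorem~\ref{thm2.16A} does yield a unique idempotent $u$ with no density hypothesis; $I=uI$ is a compact semitopological group, hence by Ellis' joint-continuity theorem a topological group carrying Haar measure; $\mathrm{cls}_XTc=E(X)c=Ix$ for every $c\in Ix$ correctly identifies each $Ix$ as minimal, so uniqueness of the minimal set forces $Ix=M$ for all $x$; and the $E(X)$-invariance of every invariant measure via Grothendieck's double-limit criterion (pointwise coincides with weak on the weakly compact closure of $Tf$) is exactly the analytic heart of the matter. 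The existence step also works, once you observe that for $t\in T$ and $p\in I$ one has $tp=(tu)p$ with $tu\in I$, so that $L_t|_I$ really is left translation by a group element of $I$ and Haar invariance applies.

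Two small repairs. Your justification of the continuity of $\bar f$ by ``dominated convergence'' is not valid for nets when $X$ is non-metrizable; fortunately you never need it, since the right-invariance computation already shows $\bar f$ is constant on $\{x\}\cup Ix$ for every $x$, hence globally constant, by pure algebra. The Fubini interchange is the one genuinely delicate point, and you flag it correctly: the clean resolution is to read $\bar f=\int_I f(p\,\cdot)\,dm(p)$ as a Pettis (weak) integral in $C(X,\mathbb{R})$ --- legitimate because $p\mapsto f(p\,\cdot)$ is continuous from $I$ into the weakly compact set $\mathrm{cls}\,Tf$ --- after which $\nu(\bar f)=\int_I\nu(f(p\,\cdot))\,dm(p)$ is the defining property of that integral and no joint measurability is required. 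Alternatively the whole ending can be shortened: $u_*\nu=\nu$ places every invariant measure on $M=uX$, and $(T,M)$ is minimal equicontinuous, so the averaging argument need only be run on $M$.
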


We can first generalize this theorem as follows:

\begin{cor}\label{thm3.3}%%%
Let $(T,X)$ be a weakly almost periodic flow. Then, restricted to any minimal subset $\Theta$ of $X$, $(T,\Theta)$ is minimal equicontinuous and hence it is uniquely ergodic.
\end{cor}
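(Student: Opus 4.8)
The plan is to reduce everything to the minimal subflow $(T,\Theta)$ and then combine Auslander's theorem with the compact-group structure of the Ellis semigroup. First I would observe that since $\Theta$ is an invariant closed subset of the \emph{w.a.p.} flow $(T,X)$, the restriction $(T,\Theta)$ is again \emph{w.a.p.}; this is precisely the first of the two elementary consequences of Definition~\ref{def1.5} recorded in $\S\ref{sec1}$. As $\Theta$ is assumed minimal, $(T,\Theta)$ is a minimal \emph{w.a.p.} flow, so Auslander's Theorem~\ref{thm2.10A} applies directly and yields that $(T,\Theta)$ is equicontinuous. This settles the ``minimal equicontinuous'' assertion with essentially no work beyond citing the two prior results.

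For the unique ergodicity I would argue from equicontinuity via the Ellis group. By Theorem~\ref{thm2.4}, $G:=E(T,\Theta)$ is a group of continuous self-maps of $\Theta$; being a closed subset of the compact space $\Theta^\Theta$ it is compact, and on an equicontinuous family the pointwise topology agrees with uniform convergence, under which composition and inversion are continuous, so $G$ is a compact topological group of homeomorphisms of $\Theta$. Minimality gives $Gx=\textrm{cls}_\Theta Tx=\Theta$ for every $x\in\Theta$, i.e.\ $G$ acts transitively; fixing a base point $x_0$ with stabilizer $H$ identifies $\Theta$ with the homogeneous space $G/H$. Pushing the normalized Haar measure of $G$ forward under $g\mapsto g x_0$ then produces a $G$-invariant Borel probability measure $\mu$ on $\Theta$, which in particular is $T$-invariant since $T$ acts through $G$. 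Note that the possible non-amenability of $T$ is irrelevant here, because the compact group $G$ is what supplies the invariant measure.

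Uniqueness is where the one genuine point lies. Given any $T$-invariant Borel probability measure $\nu$ on $\Theta$, I would show it is automatically $G$-invariant: the maps $\{\pi_t\,|\,t\in T\}$ are dense in $G$, and the $G$-action on the space of probability measures is continuous in the weak$^*$ topology, so invariance under the dense set $\{\pi_t\}$ forces invariance under all of $G$. Since the $G$-invariant measure on a homogeneous space $G/H$ is unique (it is the Haar pushforward $\mu$), we get $\nu=\mu$, hence unique ergodicity. I expect the main obstacle to be exactly this density/continuity step---checking that $G$ is a topological group acting continuously on measures and that $\{\pi_t\}$ is dense in it---rather than anything about the averaging behaviour of $T$. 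As a shortcut one may instead remark that $(T,\Theta)$, being minimal, has $\Theta$ as its unique minimal set, so Theorem~\ref{thm3.2} applies to the \emph{w.a.p.} flow $(T,\Theta)$ and delivers unique ergodicity at once.
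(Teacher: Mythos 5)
Your proof is correct, and its first half coincides with the paper's: restriction of a \emph{w.a.p.} flow to the closed invariant set $\Theta$ is again \emph{w.a.p.} (the elementary remark after Definition~\ref{def1.5}), and Auslander's Theorem~\ref{thm2.10A} then gives minimal equicontinuity. For unique ergodicity the paper simply observes that $\Theta$ is the unique minimal set of $(T,\Theta)$ and invokes the Ellis--Nerurkar result, Theorem~\ref{thm3.2} --- exactly the ``shortcut'' you mention in your last sentence. Your main argument for that half is genuinely different and more self-contained: you use Theorem~\ref{thm2.4} to realize $G=E(T,\Theta)$ as a compact topological group acting transitively on $\Theta$, push forward Haar measure to get existence, and obtain uniqueness by upgrading $T$-invariance of any measure to $G$-invariance via density of $\{\pi_t\}$ in $G$ and weak$^*$ continuity of the $G$-action on measures. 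This buys independence from Theorem~\ref{thm3.2} (whose proof in \cite{EN} goes through the proximal relation $P(T,X)$) at the cost of verifying the standard but nontrivial facts that the pointwise and uniform topologies agree on the equicontinuous family $G$, that $G$ is then a topological group, and that the invariant measure on a homogeneous space $G/H$ is unique; all of these checks are sound. Either route is acceptable; the paper's is shorter because it outsources precisely this homogeneous-space argument to Theorem~\ref{thm3.2}.
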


\begin{proof}
Since $(T,X)$ is \textit{w.a.p.}, then $(T,\Theta)$ is \textit{w.a.p.} so it is minimal equicontinuous. Thus $(T,\Theta)$ is uniquely ergodic by Theorem~\ref{thm3.2}.
\end{proof}

\begin{proof}
Let $I$ be a minimal left ideal in $E(X)$; then $(T,I,\Pi)$ is minimal equicontinuous by (iii) in the proof of Theorem~\ref{thm2.16A}. Now given any $x_0\in\Theta$, let $\varphi\colon I\rightarrow \Theta$ be given by $p\mapsto p(x_0)$. Clearly, $(T,I)\xrightarrow{\varphi}(T,\Theta)$ be an epimorphism so $(T,\Theta)$ is equicontinuous. Then by Theorem~\ref{thm3.2}, $(T,\Theta)$ is uniquely ergodic.
The proof of Theorem~\ref{thm3.3} is thus completed.
\end{proof}

Given any semigroup $T$ of self-homeomorphisms of $X$, by $\langle T\rangle$ we denote the group generated by $T$. Then:

\begin{lem}[{\cite[Theorem~3.12]{AD}}]\label{lem3.4}%%%
Let $(T,X)$ be a minimal equicontinuous semiflow with each $t\in T$ a self-homeomorphism of $X$. Then $(\langle T\rangle,X)$ is a minimal equicontinuous flow.
\end{lem}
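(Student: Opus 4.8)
The plan is to split the assertion into its two parts — equicontinuity and minimality of $(\langle T\rangle,X)$ — and to obtain equicontinuity by realizing the generated group $\langle T\rangle$ \emph{inside} the Ellis semigroup $E(X)=E(T,X)$, which under the present hypotheses turns out to be an equicontinuous group of homeomorphisms. Minimality then comes essentially for free, and the substantive work lies in controlling the inverses of the maps $\pi_t$.

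First I would note that each $t\in T$ is in particular a self-surjection of $X$, so Theorem~\ref{thm2.5} applies and $(T,X)$ is distal; by Lemma~\ref{lem2.3} this means $E(X)$ is a group with neutral element $\textit{id}_X$. Next I would show that the equicontinuity of $(T,X)$ is inherited by all of $E(X)$: the family $\{\pi_t\mid t\in T\}$ being uniformly equicontinuous, its closure in the pointwise topology is again equicontinuous. Concretely, using a base of closed symmetric entourages, for $\varepsilon\in\mathscr{U}_X$ choose a closed $\varepsilon_1\subseteq\varepsilon$ and a $\delta\in\mathscr{U}_X$ with $t(\delta[x])\subseteq\varepsilon_1[tx]$ for all $t$; if a net in $\{\pi_t\}$ converges pointwise to $p$, then $(p(x),p(y))\in\varepsilon_1\subseteq\varepsilon$ whenever $(x,y)\in\delta$, because $\varepsilon_1$ is closed. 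Hence $E(X)\subseteq C(X,X)$ is an equicontinuous family, and being a group with identity $\textit{id}_X$ it is a group of homeomorphisms.

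The key point is now that inverses cause no trouble: a priori the inverse of an equicontinuous map need not be equicontinuous, but since $E(X)$ is a \emph{group}, the inverse of every $p\in E(X)$ again lies in the equicontinuous set $E(X)$. Moreover the group inverse of $\pi_t$ in $E(X)$, characterized by $q\circ\pi_t=\pi_t\circ q=\textit{id}_X$, is exactly the homeomorphism inverse $\pi_t^{-1}$, so $\{\pi_t\mid t\in T\}\subseteq E(X)$ together with the group structure of $E(X)$ forces $\langle T\rangle\subseteq E(X)$. As a subfamily of the equicontinuous family $E(X)$, the group $\langle T\rangle$ is itself equicontinuous. Topologizing $\langle T\rangle$ as a subgroup of $E(X)$ (equivalently, with the uniform topology from $C_\mathfrak{u}(X,X)$, which coincides with $\mathfrak{p}$ on the equicontinuous set $E(X)$), it is a topological group acting jointly continuously and equicontinuously on $X$, so $(\langle T\rangle,X)$ is an equicontinuous flow.

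Finally, minimality is immediate: for each $x\in X$ one has $Tx\subseteq\langle T\rangle x$, whence $X=\textrm{cls}_XTx\subseteq\textrm{cls}_X\langle T\rangle x\subseteq X$, so every $\langle T\rangle$-orbit is dense and $(\langle T\rangle,X)$ is minimal. I expect the only delicate step to be the passage of equicontinuity to $E(X)$ via closed entourages, coupled with the identification of the abstract group inverse in $E(X)$ with the set-theoretic homeomorphism inverse; once $E(X)$ is recognized as an equicontinuous group of homeomorphisms containing $\langle T\rangle$, the remaining verifications are routine.
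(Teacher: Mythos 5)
The paper gives no proof of Lemma~\ref{lem3.4}; it is imported verbatim from \cite[Theorem~3.12]{AD}, so there is no internal argument to compare yours against. Judged on its own, your proof is correct and complete, and it runs along the same Ellis-semigroup lines that the paper itself uses elsewhere (e.g.\ in Theorems~\ref{thm2.15A}, \ref{thm2.16A} and \ref{thm2.32A}): surjectivity plus equicontinuity gives distality via Theorem~\ref{thm2.5}, hence $E(X)$ is a group with neutral element $\textit{id}_X$ by Lemma~\ref{lem2.3}; the pointwise closure of a (uniformly) equicontinuous family is again equicontinuous once one works with a base of closed entourages, which exists for the unique uniformity of a compact Hausdorff space; and since $E(X)$ is a group under composition with identity $\textit{id}_X$, the group inverse of $\pi_t$ is its two-sided compositional inverse, i.e.\ the homeomorphism $\pi_t^{-1}$, so $\langle T\rangle\subseteq E(X)$ and inherits equicontinuity. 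You correctly isolate the two genuinely delicate points (equicontinuity surviving pointwise closure, and the identification of the abstract inverse with the set-theoretic one); the remaining assertions --- that the pointwise and uniform topologies agree on an equicontinuous family over a compact space, that this makes $\langle T\rangle$ a topological group acting jointly continuously, and that minimality follows from $Tx\subseteq\langle T\rangle x$ --- are indeed routine. The argument is self-contained modulo results already quoted in the paper, which is a small bonus over the bare citation.
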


\begin{thm}\label{thm3.5}%%%
Let $(T,X)$ be a minimal weakly almost periodic semiflow with $T$ an amenable semigroup such that each $t\in T$ is bijection of $X$. Then $(T,X)$ is uniquely ergodic.
\end{thm}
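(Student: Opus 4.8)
The plan is to pass from the semigroup action to the action of the group $\langle T\rangle$ generated by $T$, exploit the unique ergodicity already available for minimal equicontinuous flows, and then verify that the two notions of invariance coincide precisely because each $t$ is invertible.

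First I would upgrade the hypotheses to equicontinuity. By Theorem~\ref{thm2.31A}, the minimality, weak almost periodicity, amenability, and bijectivity assumptions force $(T,X)$ to be uniformly almost periodic; in particular, by Theorem~\ref{thm1.6}, $(T,X)$ is equicontinuous. Since each $t\in T$ is a continuous bijection of the compact T$_2$-space $X$, it is automatically a self-homeomorphism. Thus $(T,X)$ is a minimal equicontinuous semiflow all of whose transitions are self-homeomorphisms, and Lemma~\ref{lem3.4} yields that $(\langle T\rangle,X)$ is a minimal equicontinuous flow with phase group $\langle T\rangle$.

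Next I would invoke the flow case. An equicontinuous flow is weakly almost periodic, and being minimal it has a unique minimal set (namely $X$ itself); hence Theorem~\ref{thm3.2} applies and $(\langle T\rangle,X)$ is uniquely ergodic. Let $\mu$ denote its unique $\langle T\rangle$-invariant Borel probability measure.

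The crux, and the step I expect to require the most care, is to transfer unique ergodicity back to $(T,X)$ by identifying the $T$-invariant measures with the $\langle T\rangle$-invariant ones. A $\langle T\rangle$-invariant measure is trivially $T$-invariant. For the reverse inclusion, suppose $\nu$ is $T$-invariant, i.e. $\nu(t^{-1}B)=\nu(B)$ for every $t\in T$ and every Borel set $B$. Because each $t$ is a homeomorphism, $tB$ is Borel and $t^{-1}(tB)=B$; substituting $tB$ for $B$ gives $\nu(B)=\nu(tB)$, which is exactly invariance under the inverse homeomorphism $t^{-1}$. Since $\langle T\rangle$ is generated by $T$ together with the inverses of its elements, $\nu$ is $\langle T\rangle$-invariant. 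Therefore the $T$-invariant and $\langle T\rangle$-invariant probability measures coincide, and the uniqueness of $\mu$ forces $(T,X)$ to admit exactly one invariant measure. (Existence as well as uniqueness are inherited from $(\langle T\rangle,X)$, so the amenability of $T$ enters only through Theorem~\ref{thm2.31A}.) This establishes that $(T,X)$ is uniquely ergodic.
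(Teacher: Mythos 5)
Your proposal is correct and follows essentially the same route as the paper's own proof: Theorem~\ref{thm2.31A} to get minimal equicontinuity, Lemma~\ref{lem3.4} to pass to the flow $(\langle T\rangle,X)$, and Theorem~\ref{thm3.2} to conclude. You merely spell out the step the paper dismisses with ``clearly'' (that a $T$-invariant measure is $\langle T\rangle$-invariant because each $t$ is a homeomorphism), which is a welcome addition rather than a divergence.
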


\begin{proof}
Since $T$ is amenable, there exists at least one invariant Borel probability measure for $(T,X)$. Let $\mu$ and $\nu$ be two invariant Borel probability measures for $(T,X)$.

Next by Theorem~\ref{thm2.31A}, $(T,X)$ is minimal equicontinuous. Then by Lemma~\ref{lem3.4}, $(\langle T\rangle,X)$ is a minimal equicontinuous flow. Clearly, $\mu, \nu$ are invariant for $(\langle T\rangle,X)$. Thus $\mu=\nu$ by Theorem~\ref{thm3.2}. Therefore $(T,X)$ is uniquely ergodic.
\end{proof}

Similarly we can obtain the following. We omit its proof here.

\begin{thm}\label{thm3.6}%%%
Let $(T,X)$ be a minimal weakly almost periodic semiflow with $T$ a right \textit{C}-semigroup such that each $t\in T$ is self-surjection of $X$. Then $(T,X)$ is uniquely ergodic.
\end{thm}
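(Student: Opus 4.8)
The plan is to follow the same scheme as the proof of Theorem~\ref{thm3.5}, replacing the two places where amenability was used. The role played there by Theorem~\ref{thm2.31A} (which produced minimal equicontinuity from amenability together with bijectivity) will here be played by Theorem~\ref{thm2.24A}: since $(T,X)$ is minimal and weakly almost periodic with $T$ a right \textit{C}-semigroup, Theorem~\ref{thm2.24A} gives at once that $(T,X)$ is equicontinuous.

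Next I would promote the surjectivity hypothesis to the statement that every $t\in T$ is a self-homeomorphism of $X$, so that Lemma~\ref{lem3.4} becomes applicable. Because $(T,X)$ is now equicontinuous with each $t\in T$ a self-surjection, Theorem~\ref{thm2.5} shows $(T,X)$ is distal; hence by Lemma~\ref{lem2.3} its Ellis semigroup $E(X)$ is a group with neutral element $\textit{id}_X$, and by Lemma~\ref{lem2.6} (the \textit{w.a.p.} hypothesis) $E(X)\subset C(X,X)$. Consequently each $\pi_t$ has a two-sided inverse lying in $E(X)\subset C(X,X)$, so each $t\in T$ is a self-homeomorphism of $X$. (Alternatively, distality forces each $\pi_t$ to be injective, and a continuous bijective self-map of a compact T$_2$-space is a homeomorphism.) Now Lemma~\ref{lem3.4} applies and yields that $(\langle T\rangle,X)$ is a minimal equicontinuous \emph{flow}.

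Finally I would transfer unique ergodicity from the group flow down to the semiflow. Being minimal and equicontinuous, $(\langle T\rangle,X)$ is weakly almost periodic with a single minimal set, so Theorem~\ref{thm3.2} gives that $(\langle T\rangle,X)$ is uniquely ergodic; let $\mu_0$ be its unique invariant measure. The key elementary observation is that, since every $t\in T$ acts as a homeomorphism, a Borel probability measure $\mu$ is $T$-invariant if and only if it is $\langle T\rangle$-invariant: the identity $\mu(t^{-1}B)=\mu(B)$ for all $t\in T$ propagates to the inverses $t^{-1}$ (substitute $B=tC$, giving $\mu(C)=\mu(tC)$) and to all finite products, and these together generate $\langle T\rangle$. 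Hence the set of $T$-invariant measures coincides with the singleton set of $\langle T\rangle$-invariant measures, namely $\{\mu_0\}$, and $(T,X)$ is uniquely ergodic.

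The main obstacle, and the point where this proof genuinely diverges from that of Theorem~\ref{thm3.5}, is the \emph{existence} of an invariant measure. In Theorem~\ref{thm3.5} amenability directly guarantees one, whereas a right \textit{C}-semigroup need not be amenable, so existence must instead be harvested from $\mu_0$ on the group flow $(\langle T\rangle,X)$. This is precisely why the detour through distality (Theorem~\ref{thm2.5}) is essential: only after each $t$ is known to be a homeomorphism can one both invoke Lemma~\ref{lem3.4} and identify $T$-invariance with $\langle T\rangle$-invariance, thereby securing existence and uniqueness simultaneously.
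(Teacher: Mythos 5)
Your proof is correct. The paper in fact omits the proof of this theorem, saying only that it is obtained ``similarly'' to Theorem~\ref{thm3.5}, and your argument is exactly the intended adaptation: Theorem~\ref{thm2.24A} replaces Theorem~\ref{thm2.31A} to give minimal equicontinuity, and Lemma~\ref{lem3.4} together with Theorem~\ref{thm3.2} finishes as before. Your two additions --- upgrading the assumed surjectivity of each $t$ to homeomorphism via Theorem~\ref{thm2.5}, Lemma~\ref{lem2.3} and Lemma~\ref{lem2.6}, and harvesting the \emph{existence} of an invariant measure from the group flow $(\langle T\rangle,X)$ rather than from amenability (which is unavailable here) --- are precisely the points where ``similarly'' requires care, and you handle both correctly.
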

%%%%%%%%%%%%%%%%%%%%%%%%%%%%%%%%%%%
%%%%%%%%%%%%%%%%%%%%%%%%%%%%%%%%%%%%
\section*{\textbf{Acknowledgments}}%
The author cordially thanks Professor Joe~Auslander for his much helpful comments on this note.
%points. Thanks are also due to Dr. Zhijing Chen for his many comments.
%%Particularly he is grateful to the anonymous reviewers for their comments.

This work was partly supported by National Natural Science Foundation of China (Grant Nos. 11431012, 11271183) and PAPD of Jiangsu Higher Education Institutions.
%%%%%%%%%%%%%%%%%%%%%%%%%%%

\end{document}